

\documentclass[12pt]{article}

\addtolength{\topmargin}{-3\baselineskip}
\addtolength{\textheight}{6\baselineskip}
\addtolength{\textwidth}{3cm}
\addtolength{\oddsidemargin}{-15mm}
\addtolength{\evensidemargin}{-15mm}
\usepackage[colorlinks=true,citecolor=black,linkcolor=black,urlcolor=blue]{hyperref}
\newcommand{\arxiv}[1]{\href{http://arxiv.org/abs/#1}{\texttt{arXiv:#1}}}

\usepackage{amsmath,amssymb,amsthm}
\usepackage{tikz,color,graphicx}

\numberwithin{equation}{section}

\newcommand{\remove}[1]{}

\newtheorem{theorem}{Theorem}[section]
\newtheorem{lemma}[theorem]{Lemma}

\newtheorem{proposition}[theorem]{Proposition}
\newtheorem{definition}[theorem]{Definition}

\def\nfrac#1#2{{\textstyle\frac{#1}{#2}}}
\def\dfrac#1#2{\lower0.15ex\hbox{\large$\frac{#1}{#2}$}}
\def\calH{{\mathcal H}}
\def\calL{{\mathcal U}}

\newcommand{\avec}{\boldsymbol{a}}
\newcommand{\dvec}{\boldsymbol{d}}
\newcommand{\xvec}{\boldsymbol{x}}
\newcommand{\vecdvec}{(\dvec^-,\dvec^+)}
\newcommand{\Omegau}{\Omega(\dvec)}
\newcommand{\Omegad}{\Omega(\dvec^-,\dvec^+)}

\newcommand{\Md}{\mathcal{M}(\dvec^-,\dvec^+)}

\newcommand{\Condone}{{\tt Condition\!~1}}
\newcommand{\Condtwo}{{\tt Condition\!~2}}

\newcommand{\norm}[1]{{\|#1\|}}

\makeatletter

\renewcommand{\p@enumii}{}

\renewcommand{\p@enumiii}{}
\makeatother

\def\eps{\epsilon}
\def\N{{\mathcal N}}

\title{Mixing time of the switch Markov chain\\ and stable degree sequences}
\author{Pu Gao\thanks{Research supported by NSERC.}\\
\small Department of Combinatorics and Optimization\\[-0.2ex]
\small University of Waterloo\\[-0.2ex]
\small ON, N2L 3G1, Canada\\
\small pu.gao@uwaterloo.ca
\and Catherine Greenhill\thanks{Research supported by the Australian Research Council
Discovery Project DP190100977.}\\
\small School of Mathematics and Statistics\\[-0.2ex]
\small  UNSW Sydney\\[-0.2ex]
\small NSW 2052, Australia\\
 \small c.greenhill@unsw.edu.au
 }
\date{30 November 2020}

\begin{document}

\maketitle

\begin{abstract}
The switch chain is a well-studied Markov chain which can be used to sample 
approximately uniformly from the set $\Omega(\dvec)$ of all graphs
with a given degree sequence~$\dvec$.  Polynomial mixing time 
(rapid mixing)
has been established for the switch chain under various conditions on the degree
sequences. Amanatidis and Kleer introduced the notion of strongly stable
families of degree sequences, and proved that the switch chain is rapidly
mixing for any degree sequence from a strongly stable family.  Using a different
approach, Erd{\H o}s et al.~recently extended this result to the (possibly larger)
 class
of P-stable degree sequences, introduced by Jerrum and Sinclair in 1990.
We define a new notion of stability for a given degree sequence, namely
$k$-\emph{stability}, and prove that if a degree sequence $\dvec$ is 8-stable then the
switch chain on $\Omega(\dvec)$ is rapidly mixing.  
We also provide sufficient conditions
for P-stability, strong stability and 8-stability. Using these sufficient conditions,
we give the first proof of P-stability for various families of 
heavy-tailed degree sequences, including power-law degree sequences,
and show that the switch chain is rapidly mixing for these families. 

We further extend these notions and results to directed degree sequences.
\end{abstract}

\section{Introduction}

Given a finite set of discrete objects $\Omega$ and a distribution $\pi$ over $\Omega$, how can we efficiently sample an object from $\Omega$ according to distribution $\pi$? This is a classical problem in theoretical computer science, with many applications in other research fields such as statistics, engineering, and different branches of sciences. In most situations, $\Omega$ is a very large set, and it is not possible, given limited time and computation power, to enumerate all objects in $\Omega$ and compute the probability of each object under $\pi$.  

There are various general methods developed to solve this problem. The most commonly applied method is Markov Chain Monte Carlo (MCMC). Using MCMC, one needs to define a Markov chain with state space $\Omega$ and stationary distribution $\pi$. Then, output an object in $\Omega$ after running the Markov chain sufficiently long time. The MCMC-based algorithms are approximate samplers. The challenge is to obtain an upper bound on the so-called mixing time,  the minimum number of steps required to run the Markov chain so that the distribution of the output differs from $\pi$ by a prescribed tolerance, say $\eps$, in total variation distance. The techniques used to bound the mixing time are problem-specific.

In this paper, we consider the problem of uniformly sampling a graph on vertex set $[n]=\{1,2,\ldots, n\}$ with a specified degree sequence, where $n\ge 1$ is a positive integer.  A \emph{degree sequence} is a sequence $\dvec=(d_1,\ldots, d_n)$ of nonnegative
integers with even sum.  
We say $\dvec$ is {\em graphical} if there exists a simple graph on the vertex set $[n]$ 
such that vertex $i$ has degree $d_i$ for every $i\in [n]$. Let $\Omega(\dvec)$ be the set of all graphs with vertex set $[n]$
and degree sequence $\dvec$. Hence, we will study the problem with $\Omega=\Omega(\dvec)$ and with $\pi$ the uniform distribution over $\Omega(\dvec)$.
This problem has many practical applications for researchers who use graphs to model complex
discrete systems.

The first MCMC approach to uniformly sampling graphs with a given degree was
given by Jerrum and Sinclair~\cite{JS90} in 1990. They defined a Markov chain to perform
approximately uniform sampling from a set of graphs $\Omega'(\dvec)$
which contains $\Omega(\dvec)$.  Every graph in $\Omega'(\dvec)$ has degree
sequence which is either $\dvec$, or a very small perturbation of $\dvec$.
Rejection sampling is performed until the output of the Markov chain belongs to $\Omega(\dvec)$.
Hence the expected runtime of their algorithm is polynomial
precisely when $\Omega(\dvec)$ is at least a polynomial fraction of 
$\Omega'(\dvec)$.  
Jerrum and Sinclair introduced the notion of \emph{P-stability}
to capture this condition.
 See Section~\ref{sec:comparison} for a precise definition of P-stability.

The switch Markov chain $\mathcal{M}(\dvec)$ (or \emph{switch chain})
has state space $\Omega(\dvec)$ and makes a transition by replacing a pair of edges 
by another pair of edges (possibly the same pair), ensuring that the resulting graph 
is simple and has degree sequence $\dvec$.  A more formal definition is given in 
Section~\ref{s:switch-chain}.  This chain is appealing because, unlike the
Jerrum--Sinclair chain, every sample belongs to $\Omega(\dvec)$.
Many authors have studied the mixing time of the switch chain for
particular families of degree sequences, bipartite degree sequences and
directed degree sequences~\cite{CDG,directed,EKMS, EMMS,EMT, GS,KTV,MES}. 
Amanatidis and Kleer~\cite{AK} gave an ingenious comparison argument to prove that
the switch chain has polynomial mixing time for \emph{strongly stable}
classes of degree sequences. 
(See Section~\ref{sec:comparison} for a precise definition of strongly stability.)
The classes of degree sequences for which the switch chain 
for graphs was known to be rapidly mixing (before~\cite{AK})
are all strongly stable,
so the theorem of~\cite{AK} can be seen as a common generalisation of these
results. Amanatidis and Kleer also adapt the definition of strongly stable
to classes of bipartite degree sequences, and prove that the switch chain for 
bipartite graphs has polynomial mixing time for strongly stable classes of bipartite
degree sequences.  This gives a common generalisation of the results 
in~\cite{EMMS,KTV,MES} for the bipartite switch chain.

All strongly stable degree classes are P-stable, but it is not known whether
the converse holds.  Recently, Erd{\H o}s et al.~\cite{hungarians} proved that the
switch chain has polynomial mixing time (rapid mixing) for all P-stable  degree
sequences (also in the bipartite and directed setting).  Hence this result
extends that of~\cite{AK} from strongly stable to P-stable classes, and to
directed degree sequences.
However, P-stability is not a necessary condition for rapid mixing of the switch
chain, as shown recently by Erd{\H o}s et al.~\cite{non-P-stable}.

\bigskip

In this paper, we define a new notion of stability for degree sequences. 
For any vector $\xvec\in\mathbb{R}^n$ let $\norm{\xvec}_1 = \sum_{j=1}^n |x_j|$ denote the
$\ell_1$-norm of $\xvec$. Necessarily, for any graphical degree sequence $\dvec$, we have $\dvec\in {\mathbb N}^n$ and $\norm{\dvec}_1$ even. Let $M(\dvec)$ denote 
$\norm{\dvec}_1$.

\begin{definition}\label{def:k-stability}
Given a positive integer $k$ and nonnegative real number $\alpha$, 
we say a graphical degree sequence $\dvec$ is $(k,\alpha)$-\emph{stable} if
\[ |\Omega(\dvec')| \leq  M(\dvec)^{\alpha}\, |\Omega(\dvec)|\]
for every graphical sequence $\dvec'$ with $\norm{\dvec' - \dvec}_1 \leq k$.
Let
$\mathcal{D}_{k,\alpha}$ be the set of all degree sequences that are $(k,\alpha)$-stable. 
We say that a family $\mathcal{D}$ of degree sequences is $k$-\emph{stable} if
there exists a constant $\alpha>0$ such that 
${\mathcal D}\subseteq  {\mathcal D}_{k,\alpha}$.
\end{definition}

Obviously,
\begin{align}
\text{ for all $k$},\quad & {\mathcal D}_{k,\alpha_1}\subseteq {\mathcal D}_{k,\alpha_2} \quad  \mbox{if $\alpha_1\le \alpha_2$,} \label{alpha}\\ 
\text{ for all $\alpha$},\quad & {\mathcal D}_{k_1,\alpha} \subseteq {\mathcal D}_{k_2,\alpha} \quad \mbox{if $k_1\ge k_2$.} \label{k}
\end{align}

Our definition of 2-stability is equivalent to the notion of P-stability, 
introduced by Jerrum and Sinclair~\cite{JS90}. We prove this in Proposition~\ref{p:equivalence}. 
It is not clear whether $k$-stability is equivalent to P-stability for fixed $k>2$. 
Relationships between P-stability, strong stability and 8-stability will be further discussed in Section~\ref{sec:comparison}.

Suppose that ${\mathcal D}$ is an 8-stable family of degree sequences.  One of our main results is the following:
\begin{equation}
\mbox{The switch chain on $\Omega(\dvec)$ mixes in polynomial time for all $\dvec\in {\cal D}$.}\label{main}
\end{equation}
A more accurate statement is given in Theorem~\ref{thm:mixing}.

We are usually interested in \emph{sequences} of degree sequences
$\dvec(n)$, indexed by the positive integers $n\in {\mathbb Z}_{\ge 1}$, or $n\in {\cal I}$ where ${\cal I}$ is an infinite subset of ${\mathbb Z}_{\ge 1}$. 
Due to the technical nature of the definition of $P$-stability, strong stability and 
$k$-stability, it is in general not easy to determine if a sequence of degree sequences $(\dvec(n))_{n\in {\cal I}}$ is stable. Jerrum, Sinclair and McKay~\cite{JMS} gave 
two sufficient conditions for a family of degree sequences to be $P$-stable.
Their first condition,~\cite[Theorem~8.1]{JMS},
 is an inequality involving the maximum and minimum degrees
and $n$, while their second condition~\cite[Theorem~8.3]{JMS} is a ``more refined''
inequality which also involves the number of edges.
Using~\cite[Theorem~8.1]{JMS},  they verified that the family of regular degree sequences is $P$-stable. They also verified $P$-stability of several other families of ``moderate'' degree sequences, in the sense that the degrees are not too far from being regular. 

In this paper, we will give a sufficient condition, \Condone, for a degree
sequence to be 8-stable. We will also give a sufficient condition, \Condtwo, 
for $P$-stability and strong stability. \Condtwo\ is slightly weaker than \Condone. While our conditions apply to moderate degree sequences, they work particularly well for heavy-tailed degree sequences.   
We will prove that the following families of degree sequences all satisfy \Condone\ (and thus also \Condtwo), and thus they are ``stable'' in the sense of P-stability, strong stability and 8-stability:
\begin{itemize}
\item power-law degree sequences with exponent greater than 2;
\item bi-regular degree sequences permitting both constant degrees and degrees linear in~$n$;
\item other heavy-tailed degree sequences examined in~\cite{GW-power}.
\end{itemize}

\bigskip

Our theorems apply to a possibly smaller class of degree sequences than those 
of~\cite{hungarians}. 
(See Section~\ref{sec:comparison}
for a comparison between 8-stability, P-stability and strong stability.)
However, our approach is much simpler and requires less additional
machinery than the arguments of~\cite{AK,hungarians}. 
We make minimal changes to the multicommodity flow argument
given in~\cite{CDG,GS}, presenting a new counting argument to prove the 
so-called ``critical lemma''.
Furthermore, our sufficient conditions \Condone, \Condtwo\ enable us to 
expand the class of degree sequences known to be P-stable, subsuming the sufficient 
condition from~\cite[Theorem~1.1]{GS}. 
\medskip

\noindent {\bf Remark.}\ 
Power-law degree sequences, and other heavy-tailed degree sequences,
are of particular importance in network science.
The more refined sufficient condition for P-stability given 
in~\cite[Theorem~8.3]{JMS} can be used to show that the family of 
``power-law distribution-bounded'' degree sequences (defined in 
Section~\ref{ss:power-distribution}) is P-stable when the average degree is at 
most twice the minimum degree.   The requirement that the average degree is at 
most twice the minimum degree effectively excludes most of the interesting power-law degree sequences, i.e.\ power-law degree sequences with minimum degree 1 and average degree greater than 2.
The first author and Wormald mentioned in~\cite{Gao-power} that power-law degree sequences with exponent greater than 2 can be shown to be P-stable. However, that assertion has not been proved, in~\cite{Gao-power} or elsewhere. We establish the 8-stability, and 
thus the P-stability, of power-law degree sequences in Section~\ref{sec:applications}. 
\bigskip

To conclude this section, we discuss approaches to sampling from $\Omega(\dvec)$
which are not based on Markov chains.
McKay and Wormald~\cite{McKW90} gave an algorithm, based on an operation
called switchings, which performs \emph{exactly}
uniform sampling from  $\Omega(\dvec)$ in expected polynomial time
when the maximum degree is not too large.  This result has been improved
upon by Wormald and the first author~\cite{GW}, who achieved expected runtime $O(d^3 n)$
for $d$-regular degree sequences when $d=o(\sqrt{n})$. 
Wormald and the first author also adapted
their approach to power law degree sequences\cite{Gao-power}. 
Very recently these results were improved further by Arman, Wormald and the first author~\cite{AGW},
who presented an algorithm with expected runtime which is $O(nd + d^4)$ for
$d$-regular graphs with $d=o(\sqrt{n})$, and is $O(n)$ for the same
class of power-law degree sequences considered in~\cite{Gao-power}
(roughly speaking, those with power-law exponent greater than $2.88$).

Switchings-based approximate sampling algorithms with very fast expected
runtime (linear or sub-quadratic) have been given by Bayati, Kim and Saberi~\cite{BKS},
Kim and Vu~\cite{KV}, Steger and Wormald~\cite{SW} and Zhao~\cite{zhao}.
The error in the output distribution of these algorithms are functions of $n$ and
tend to zero as $n$ grows.  Unlike Markov chain algorithms, this error cannot be
made smaller by increasing the runtime of the algorithm.

\section{Main results}\label{sec:results}

Given $\dvec$, let $\mu\in S_n$ be a permutation such that 
$d_{\mu(1)}\ge d_{\mu(2)}\ge \cdots d_{\mu(n)}$.
Define
\[
\Delta(\dvec)=d_{\mu(1)},\quad J(\dvec)=\sum_{i=1}^{d_{\mu(1)}} d_{\mu(i)}.
\]
That is, $\Delta(\dvec)$ is the largest entry of $\dvec$, while
$J(\dvec)$ is the sum of the $\Delta(\dvec)$ largest entries of~$\dvec$.

First we state~(\ref{main}) in a more formal and accurate form.

\begin{theorem}\label{thm:mixing}
Suppose that the graphical degree sequence $\dvec$ is $(8,\alpha)$-stable. 
Write $M=M(\dvec)$ and $\Delta=\Delta(\dvec)$.
Then the switch chain on $\Omega(\dvec)$ mixes in polynomial time, with
mixing time $\tau(\varepsilon)$ which satisfies
\[
\tau(\varepsilon) \leq 12\,  \Delta^{14}\,n^{6}\, M^{3+\alpha} \, 
   \Big( \nfrac{1}{2}M\log M + \log(\varepsilon^{-1})\Big).
\]
\end{theorem}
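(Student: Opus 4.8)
The plan is to bound the mixing time of the switch chain via the multicommodity flow method, following the framework of \cite{CDG,GS}, and to isolate the single place where $8$-stability is actually used. First I would recall the standard setup: for each ordered pair $(G,G')$ of graphs in $\Omega(\dvec)$ we route a unit of flow along canonical paths (in fact a convex combination of canonical paths, averaged over an auxiliary choice of a perfect matching or pairing of the symmetric difference $G\triangle G'$, as in \cite{CDG}). The mixing time bound then follows from the standard estimate $\tau(\varepsilon)\le \rho\,\bigl(\log|\Omega(\dvec)| + \log(\varepsilon^{-1})\bigr)/\Phi_{\min}$-type inequality, where $\rho$ is the maximum flow through any transition (edge of the chain's graph), $\Phi_{\min}$ is the minimum transition probability, and $|\Omega(\dvec)|$ controls the $\log$ factor via $\log|\Omega(\dvec)|\le \tfrac12 M\log M$. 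The transition probabilities are $\Omega(1/(n^2\Delta^2))$ or so, contributing the polynomial factors in $n$ and $\Delta$; this part is routine bookkeeping identical to \cite{GS}.

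Next I would set up the decomposition of each canonical path into switches. A canonical path from $G$ to $G'$ processes the symmetric difference $G\triangle G'$ circuit by circuit, performing switches that successively reduce the difference; along the way one may pass through an intermediate graph $Z$ that is \emph{not} in $\Omega(\dvec)$ but whose degree sequence $\dvec'$ differs from $\dvec$ by a bounded $\ell_1$ amount --- this is exactly why we need stability, and the bound is $\norm{\dvec'-\dvec}_1\le 8$ (the precise constant comes from the number of ``extra'' edge-endpoints that can be temporarily unbalanced during the circuit-processing; I expect two ``defect'' vertices each off by at most $2$, doubled by counting both deficiency and excess, giving $8$). The key quantity to control is, for a fixed transition $t=(Z,Z')$ of the chain, the number of pairs $(G,G')$ whose canonical path uses $t$, weighted appropriately. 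One encodes each such pair by $t$ together with a bounded amount of extra information (the ``encoding'' graph), and the crux is that the set of encodings injects into $\Omega(\dvec'')$ for some degree sequence $\dvec''$ with $\norm{\dvec''-\dvec}_1$ small --- at most $8$ by construction.

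The main obstacle, and the genuinely new contribution, is the ``critical lemma'': showing that this count is at most $|\Omega(\dvec)|$ times a polynomial factor. This is where $(8,\alpha)$-stability enters: the encoding lands in $\Omega(\dvec'')$ with $\norm{\dvec''-\dvec}_1\le 8$, so by Definition~\ref{def:k-stability} we have $|\Omega(\dvec'')|\le M^\alpha|\Omega(\dvec)|$, which converts the naive bound ``number of pairs through $t$ $\le$ (poly)$\cdot|\Omega(\dvec'')|$'' into ``$\le$ (poly)$\cdot M^\alpha\,|\Omega(\dvec)|$'', contributing the $M^{\alpha}$ factor in the statement. The combinatorial heart is the new counting argument that produces the encoding with only $\mathrm{poly}(n,\Delta,M)$ ambiguity --- one must show that from $t$ and the encoding graph plus $O(1)$ bits one can reconstruct both $G$ and $G'$, so that the fibre over each $t$ has size at most $\mathrm{poly}\cdot|\Omega(\dvec'')|$. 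Assembling $\rho\le \mathrm{poly}(n,\Delta,M)\cdot M^\alpha$, combining with the transition-probability lower bound and the $\tfrac12 M\log M$ term, and tracking the explicit exponents ($\Delta^{14}$, $n^6$, $M^{3+\alpha}$, and the constant $12$) through the multicommodity-flow inequality then yields the stated bound. The exponent accounting is tedious but deterministic; the only step requiring real ideas is the encoding/critical lemma.
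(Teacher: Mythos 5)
Your overall strategy matches the paper's: a multicommodity flow bound in the style of \cite{CDG,GS}, with all dependence on the degree sequence isolated in a single ``critical lemma'' bounding the number of encodings consistent with a state $Z$, and with $(8,\alpha)$-stability entering only to convert $|\Omega(\dvec')|$ into $M^\alpha|\Omega(\dvec)|$. The surrounding bookkeeping ($\log|\Omega(\dvec)|\le\nfrac{1}{2}M\log M$, path length at most $M/2$, $1/Q(e)\le M^2|\Omega(\dvec)|$, and the $\Delta^{14}$ inherited from the flow analysis of \cite{GS}) is exactly as you describe.

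Two points need correction or filling in. First, every intermediate state $Z$ on a canonical path of the \emph{switch} chain lies in $\Omega(\dvec)$ --- a switch preserves the degree sequence exactly --- so your claim that one passes through graphs with perturbed degrees conflates this chain with the Jerrum--Sinclair chain. The perturbed degree sequence arises only in the encoding $L$ defined by $L+Z=G+G'$: after deleting its at most four defect edges (those labelled $-1$ or $2$), what remains is a simple graph whose degree sequence $\dvec'$ satisfies $\norm{\dvec'-\dvec}_1\le 8$. Second, that inequality --- and hence the constant $8$ in the hypothesis --- is not ``by construction'' in the way you suggest (two defect vertices each off by $2$, doubled); it follows from the structural lemma of \cite{GS} that the defect edges form a subgraph of one of ten specific four-edge configurations, followed by a case check (the worst case has three vertices whose degree changes by $2$ and two whose degree changes by $1$, giving $3\cdot 2+2\cdot 1=8$). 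With that classification in hand, the counting is short: there are at most $24n^6$ choices of defect configuration, and for each, at most $|\Omega(\dvec')|\le M^\alpha|\Omega(\dvec)|$ ways to complete the encoding, so $|\mathcal{L}^\dagger(Z)|\le 24n^6M^\alpha|\Omega(\dvec)|$, which combined with the black-box flow bound yields the stated constant $12=\nfrac{1}{2}\cdot 24$. Your proposal correctly identifies this counting step as the crux but leaves it as an unproven assertion, and the justification you offer for the bound $8$ is not the right one.
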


Our next theorem gives a sufficient condition for a degree sequence to be $(8,8)$-stable, and a weaker condition which implies both P-stability and
strong stability.

\begin{theorem}\label{thm:stable}
\begin{enumerate}
\item[\emph{(a)}] Let $\dvec$ be a graphical degree sequence which satisfies
\[
\text{\Condone:} \quad  M(\dvec)>2 J(\dvec)+18\Delta(\dvec)+56.\qquad
\]
Then $\dvec$ is $(8,8)$-stable.
\item[\emph{(b)}] Suppose that ${\cal D}$ is a family of degree sequences such that 
every $\dvec\in {\cal D}$ satisfies
\[
\text{\Condtwo:} \quad M(\dvec)>2 J(\dvec)+6\Delta(\dvec)+2.\qquad
\]
 Then ${\cal D}$ is both $P$-stable, and strongly stable.  
\end{enumerate}
\end{theorem}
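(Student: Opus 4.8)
The plan is to prove both parts by a single core argument: given a graphical degree sequence $\dvec$ and a graphical $\dvec'$ with $\norm{\dvec'-\dvec}_1 \le k$ (where $k=8$ for part (a) and $k=2$ for part (b)), construct an injection—or more precisely a low-multiplicity map—from $\Omega(\dvec')$ into $\Omega(\dvec)$, losing only a factor polynomial in $M=M(\dvec)$. The standard device is to fix a small ``correction gadget'': since $\norm{\dvec'-\dvec}_1\le k$, the two sequences differ on at most $k$ coordinates, and we can pass from $\dvec'$ to $\dvec$ by a bounded number of elementary steps each of which changes the degree of one or two vertices by $1$. For each such step we want to show that, for a graph $G'\in\Omega(\dvec')$, one can reattach/detach a single edge (or perform a single switch using an auxiliary edge somewhere in the graph) to reach a graph with the intermediate degree sequence, and that this operation is reversible up to a multiplicity bounded by roughly $M$ (the number of possible endpoints for the manipulated edge). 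Iterating $O(k)$ times gives $|\Omega(\dvec')|\le M^{O(k)}|\Omega(\dvec)|$, which is exactly the form required by Definition~\ref{def:k-stability}; for part (a) the bookkeeping must be tight enough to land at exponent $8$, and for part (b) one must additionally check the (more restrictive) definitions of P-stability and strong stability from Section~\ref{sec:comparison}.

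The crucial point where \Condone{} and \Condtwo{} enter is in guaranteeing that the required edge-reattachment is always \emph{possible}. Moving one unit of degh from a vertex $u$ to a vertex $v$ (the generic step) requires finding a vertex $w$ with $w\sim u$, $w\not\sim v$, $w\neq v$, so that deleting $uw$ and adding $vw$ is a legal move in a simple graph; similarly for the detach/attach variants. Such a $w$ fails to exist only if the neighbourhood of $u$ is ``blocked''—essentially if $u$ together with $v$ dominate the graph, which forces many vertices to have large degree. Quantitatively, the obstruction can be bounded in terms of $J(\dvec)$ (the sum of the $\Delta$ largest degrees, which controls how much ``room'' the high-degree vertices occupy) and $\Delta(\dvec)$, and one shows that if $M > 2J(\dvec)+c\,\Delta(\dvec)+c'$ then a valid $w$ always exists, with the constants $c=18$, $c'=56$ sufficing after $O(8)$ iterations for (a), and the milder $c=6$, $c'=2$ sufficing for the single-step analysis underlying (b). This is a counting/pigeonhole argument: the number of vertices forbidden as the ``swap partner'' $w$ is at most (degree of $v$) $+$ (number of high-degree vertices) $+$ $O(1)$, and $M$ large compared to $2J+c\Delta$ ensures this is strictly less than the number of neighbours of $u$ available.

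I would organise the write-up as: (1) reduce to the single-unit-of-degree-transfer step and state the gadget lemma (``if \Condone{}/\Condtwo{} holds then the transfer move is always available and $k$-to-$1$'' for an explicit small $k$); (2) prove the gadget lemma by the pigeonhole count above, carefully handling the boundary cases where $u$ or $v$ already has extreme degree, or where the edge $uv$ is present/absent; (3) iterate the gadget $O(8)$ times, tracking how $\Delta$, $J$ and $M$ change along the intermediate sequences (they change by $O(1)$, so the hypothesis is preserved with slightly worse constants, which is where the gap between $18,56$ and $6,2$ comes from), and multiply the per-step multiplicities to get the bound $M^8$; (4) for part (b), additionally verify that \Condtwo{} is preserved under the perturbations used in the definitions of P-stability and strong stability, and invoke Proposition~\ref{p:equivalence} to translate $(2,\cdot)$-stability into P-stability. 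The main obstacle I expect is step (2)/(3): making the case analysis for the elementary move genuinely exhaustive (parallel edges, loops in the switch formulation, the transferred edge coinciding with an existing edge, both endpoints being high-degree) while keeping the multiplicity and the constants controlled, since it is precisely the tightness here that determines whether the final exponent is $8$ and whether the clean thresholds in \Condone{} and \Condtwo{} are the right ones.
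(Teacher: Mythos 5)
Your high-level architecture matches the paper's: reduce to perturbations $\dvec'$ with $\norm{\dvec'-\dvec}_1=2$ (of the three types $\dvec\pm\boldsymbol{e}_i\pm\boldsymbol{e}_j$), prove a per-step bound $|\Omega(\dvec')|\le M^2|\Omega(\dvec)|$ by a reversible local operation with controlled forward/backward multiplicities, iterate four times through intermediate sequences in $\N_6(\dvec)$ while tracking how $M$, $J$, $\Delta$ degrade (this is exactly where the gap between the constants $18,56$ and $6,2$ comes from), and obtain strong stability for part (b) by observing that each local operation decomposes into a bounded number of Jerrum--Sinclair moves. All of that is sound and is what the paper does.

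However, your key step (2) -- the ``gadget lemma'' -- is quantified incorrectly, and as stated it fails for precisely the heavy-tailed sequences the theorem targets. You propose the single hinge flip: find $w$ with $w\sim u$, $w\not\sim v$, $w\neq v$, and you argue by pigeonhole that the forbidden set (size roughly $d_v$ plus the number of high-degree vertices) is smaller than ``the number of neighbours of $u$ available''. That candidate set has size $d_u\le\Delta(\dvec)$, and the hypothesis $M(\dvec)>2J(\dvec)+c\,\Delta(\dvec)+c'$ gives no lower bound on $d_u$ whatsoever: for a power-law sequence most vertices have degree $1$, and if $u$ has degree $1$ with its unique neighbour adjacent to (or equal to) $v$, no valid $w$ exists, so $\min_{G'}f(G')=0$ and the ratio argument collapses. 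The fix -- which is what the paper's proof of Theorem~\ref{thm:main} actually does -- is to make the gadget consume an \emph{auxiliary edge chosen from anywhere in the graph}: e.g.\ for $\dvec'=\dvec+\boldsymbol{e}_i-\boldsymbol{e}_j$ one picks a neighbour $u_1$ of $i$ \emph{and} an arbitrary edge $u_2u_3$, deletes $iu_1$ and $u_2u_3$, and adds $u_1u_2$ and $u_3j$. The candidate set then has size about $M(\dvec)$, and the exclusions are $O(\Delta)$ vertex collisions plus at most $2\bigl(J(\dvec)-\Delta(\dvec)\bigr)$ adjacency obstructions (counted via the $2$-path bound~(\ref{2paths}), which is where $J$ genuinely enters -- not as a count of high-degree vertices). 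Only then does \Condone/\Condtwo\ guarantee positivity of the forward count, and the ratio $\max b/\min f\le M^2$ follows. Without replacing your single-edge reattachment by such two-edge switchings, the proof does not go through.
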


\noindent{\bf Remark.}\ We did not try to optimise the coefficient of $\Delta(\dvec)$ and the constant term in the assumptions of Theorem~\ref{thm:stable}. With more careful treatment in the proof, these numbers can be reduced.

As a further corollary we have the following mixing time bound on $\Omega(\dvec)$ when $\dvec$ satisfies the condition in Theorem~\ref{thm:stable}(a).

\begin{theorem}\label{thm:mixing2} Assume that ${\dvec}=(d_1,\ldots, d_n)$ is
a graphical degree sequence which satisfies \Condone.
Write $M=M(\dvec)$ and $\Delta = \Delta(\dvec)$.
Then the switch chain on $\Omega(\dvec)$ mixes in polynomial time, with 
mixing time $\tau(\varepsilon)$ which satisfies
\begin{equation}
\tau(\varepsilon) \leq 12\, \Delta^{14}\,n^{6}\, M^{11}\,  
   \Big(\nfrac{1}{2} M\log M + \log(\varepsilon^{-1})\Big). \label{mixingbound}
\end{equation}
\end{theorem}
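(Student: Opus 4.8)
\textbf{Proof proposal for Theorem~\ref{thm:mixing2}.}
The plan is simply to compose the two preceding theorems. First I would invoke Theorem~\ref{thm:stable}(a): since $\dvec$ satisfies \Condone, the degree sequence $\dvec$ is $(8,8)$-stable, i.e.\ $\dvec\in \mathcal{D}_{8,8}$. In particular $\dvec$ is $(8,\alpha)$-stable with $\alpha = 8$ (one could also appeal to~(\ref{alpha}) to use any larger constant, but $\alpha=8$ is the sharp choice here).

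Next I would feed this into Theorem~\ref{thm:mixing} with the specific value $\alpha = 8$. That theorem gives, for any $(8,\alpha)$-stable degree sequence, a mixing time bound of
\[
\tau(\varepsilon) \leq 12\, \Delta^{14}\, n^{6}\, M^{3+\alpha}\, \Big(\nfrac{1}{2}M\log M + \log(\varepsilon^{-1})\Big),
\]
where $M=M(\dvec)$ and $\Delta=\Delta(\dvec)$. Substituting $\alpha = 8$ turns the exponent $3+\alpha$ into $11$, which yields exactly the bound~(\ref{mixingbound}) claimed in the statement. Since the resulting bound is polynomial in $M$, $n$, $\Delta$ and $\log(\varepsilon^{-1})$ (and $M\le \Delta n$, $\Delta\le n$), this also establishes polynomial mixing.

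Since this is a direct composition of results established earlier in the paper, there is no genuine obstacle in this step itself; the substance lies entirely in Theorems~\ref{thm:mixing} and~\ref{thm:stable}. The only point requiring a (trivial) word of care is matching the stability parameter: one must observe that \Condone\ gives $(8,8)$-stability, so that the exponent appearing in Theorem~\ref{thm:mixing} is $3+8 = 11$ and not something larger; this is where the particular numerical form of~(\ref{mixingbound}) comes from.
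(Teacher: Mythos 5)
Your proposal is correct and is exactly the paper's argument: the paper also proves Theorem~\ref{thm:mixing2} by noting that \Condone\ gives $(8,8)$-stability via Theorem~\ref{thm:stable}(a) and then substituting $\alpha=8$ into the bound of Theorem~\ref{thm:mixing}, so that $M^{3+\alpha}=M^{11}$. Nothing is missing.
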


\begin{proof} The result follows immediately from Theorems~\ref{thm:mixing} and~\ref{thm:stable}.
\end{proof}

\noindent {\bf Remark.}\
The mixing time bounds in Theorem~\ref{thm:mixing} and Theorem~\ref{thm:mixing2} are
probably far from tight, and are not the main focus of our work.  It is likely that
the mixing time bound that can be inferred from the results of Erd{\H o}s et al~\cite{hungarians}
is lower than our bound.  (It is harder to extract an explicit upper bound on the mixing
time of the switch chain from~\cite{AK}, so we do not make any comparison with their bound.)

\bigskip

The structure of the paper is as follows. After reviewing the necessary Markov
chain definitions and outlining the multicommodity flow argument from~\cite{GS},
we prove Theorem~\ref{thm:mixing} in Section~\ref{sec:flow} by employing a new 
counting argument. 
The proof of Theorem~\ref{thm:stable}(a) is given in Section~\ref{sec:stable}, 
while the proof of Theorem~\ref{thm:stable}(b) is deferred to Section~\ref{sec:comparison}.
In Section~\ref{sec:applications} we apply Theorem~\ref{thm:stable} to give
the first
proof of P-stability for several families of heavy-tailed degree sequences,
and provide an explicit upper bound for the mixing time of the switch chain for degree sequences
from these families.
In Section~\ref{sec:comparison}, $P$-stability and strong stability will be formally
defined and compared with our new notion of $k$-stability.  Then
the proof of Theorem~\ref{thm:stable}(b) will be presented.
Finally, some analogous results for the switch chain on directed graphs are
established in Section~\ref{sec:directed}.

\section{Polynomial mixing for 8-stable degree sequences}\label{sec:flow}

The aim of this section is the proof of Theorem~\ref{thm:mixing}.
First we must review some background.

\medskip

Let $\mathcal{M}$ be a reversible Markov chain with finite state space $\Omega$,
transition  matrix $P$ and stationary distribution $\pi$.  The
\emph{total variation distance} between two probability distributions
$\sigma$, $\sigma'$ on $\Omega$ is
\[ d_{\mathrm{TV}}(\sigma,\sigma') = \nfrac{1}{2} \sum_{x\in\Omega}
  |\sigma(x) - \sigma'(x)|.
\]
The \emph{mixing time} $\tau(\varepsilon)$ is defined by
\[ \tau(\varepsilon) = \max_{x\in\Omega}\,
           \min\{ T \geq 0 \mid   d_{\mathrm{TV}}(P^t_x,\pi)\leq \varepsilon
  \,\, \text{ for all $t \geq T$}\}
\]
where $P^t_x$ is the distribution of the state $X_t$ of $\mathcal{M}$ after
$t$ steps from the initial state $X_0=x$.
We say that the Markov chain $\mathcal{M}$ is \emph{rapidly mixing},
or has  \emph{polynomial mixing time}, if $\tau(\varepsilon)$ is
bounded from  above by some polynomial in $\log(|\Omega|)$ and $\log(\varepsilon^{-1})$.

Sinclair~\cite{sinclair} introduced multicommodity flow as a generalisation
of the canonical path method.
Let $\mathcal{G}$ be the directed graph underlying a Markov
chain $\mathcal{M}$, so that $xy$ is an edge of $\mathcal{G}$ if and only
if $P(x,y)>0$. A \emph{flow} in $\mathcal{G}$ is a function 
$f:\mathcal{P}\rightarrow [0,\infty)$ such that
\[ \sum_{p\in\mathcal{P}_{xy}} f(p) = \pi(x)\pi(y) \quad \text{ for all }
 \,\, x,y\in\Omega,\,\, x\neq y.\]
Here we define $\mathcal{P}_{xy}$ to be 
the set of all simple directed paths from $x$ to
$y$ in $\mathcal{G}$.  In practice, when defining a multicommodity flow,
we will set $f(p)=0$ for most paths, and focus on a much smaller set
of directed paths which will each carry a positive amount of flow.
Let $\mathcal{P} = \cup_{x\neq y} \mathcal{P}_{xy}$.
Extend $f$ to a function on oriented edges by setting
$f(e) = \sum_{p\ni e} f(p)$, so that $f(e)$ is the total flow routed through $e$.  
Write $Q(e) = \pi(x) P(x,y) =\pi(y) P(y,x)$
for the edge $e=xy$. The quantity $Q(e)$ is well-defined, by the reversibility of $\cal M$. Let $\ell(f)$ be the \textsl{length} of the longest path with
$f(p) > 0$, and let $\rho(e) = f(e)/Q(e)$ be the \emph{load} of the edge $e$.
The \emph{maximum load} of the flow is
$\rho(f) = \max_e \rho(e)$.
Using Sinclair~\cite[Proposition 1 and Corollary 6']{sinclair},  the mixing time of
$\mathcal{M}$ can be bounded above by
\begin{equation}
\label{flowbound} 
\tau(\varepsilon) \leq \rho(f)\ell(f)\left(\log(1/\pi^*) + \log(\varepsilon^{-1})\right)
\end{equation}
where $\pi^* = \min\{ \pi(x) \mid x\in\Omega\}$.

\subsection{The switch chain and multicommodity flow}\label{s:switch-chain}

The switch Markov chain $\mathcal{M}(\dvec)$ (or \emph{switch chain})
has state space
$\Omega(\dvec)$ and transitions given by the following procedure:
from the current state $G\in\Omegau$, choose an unordered pair of 
 distinct non-adjacent edges uniformly at random, 
say $F=\{\{ x,y\},\{ z,w\}\}$, and choose a perfect matching $F'$
from the set of three perfect matchings of (the complete graph on)
$\{ x,y,z,w\}$, chosen uniformly at random.  If 
$F'\cap \left( E(G) \setminus F\right) = \emptyset$ then the next state
is the graph $G'$ with edge set $\left(E(G) \setminus F\right)\cup F'$,
otherwise the next state is $G'=G$.

Define $M_2(\dvec) = \sum_{j=1}^n d_j(d_j-1)$. 
If $P(G,G')\neq 0$ and $G\neq G'$ then $P(G,G') = 1/\big(3 a(\dvec)\big)$, where
\begin{equation}
\label{ad}
 a(\dvec) = \binom{M(\dvec)/2}{2} - \dfrac{1}{2}\, M_2(\dvec)
\end{equation}
is the number of unordered pairs of distinct nonadjacent edges in $G$.
This shows that the switch chain $\mathcal{M}(\dvec)$ is symmetric,
and it is aperiodic since $P(G,G)\geq 1/3$ for all
$G\in\Omega(\dvec)$.

Cooper, Dyer and the second author~\cite{CDG} defined and analysed a 
multicommodity flow for the switch chain for regular degree sequences.
They proved that the switch chain has polynomial mixing time for
$\Omega(\dvec)$ whenever $\dvec = (d,d,\ldots, d)$ is a $d$-regular
sequence, for any $d=d(n)$.  The second author and Sfragara~\cite{GS}
showed how the analysis could be extended to irregular degree sequences
which were not too dense.  Surprisingly, as the proof given in~\cite{CDG}
is quite long and technical, there was only one lemma which relied on
regularity in its proof.  In~\cite{GS} a new argument was provided for this
``critical lemma'', leading to the extended rapid mixing result for irregular degree sequences.

To understand the purpose of the critical lemma, we provide a brief outline
of the multicommodity flow argument from~\cite{CDG,GS}.  The analysis
of the multicommodity flow is discussed in Section~\ref{ss:new-counting},
and the purpose of the critical lemma is given in (\ref{critical}).
Then in Section~\ref{sec:stable} we will give a new counting proof (Lemma~\ref{main}) 
which establishes the critical lemma when $\dvec$ is 8-stable. 

\bigskip

Let $G, G'\in\Omega(\dvec)$ be two graphs and let $G\triangle G'$ be
the symmetric difference of $G$ and $G'$, treated as a 2-coloured graph
(with edges from $G\setminus G'$ coloured blue and edges from $G'\setminus G$ 
coloured red, say). 
We define a set of directed paths
from $G$ to $G'$, and assign a value $f(p)$ to each of these paths, so that
$f$ is a flow.
\begin{itemize}
\item Define a bijection from the set of blue edges incident at $v$ to the
set of red edges incident at $v$, for each vertex $v\in \{ 1,\ldots, n\}$.
The vector of these bijections is called a \emph{pairing} $\psi$,
and the set of all possible pairings is denoted $\Psi(G,G')$.
\item  The pairing gives a canonical way to decompose the symmetric difference
$G\triangle G'$ into a sequence of simpler closed alternating walks,
called 1-\emph{circuits} and 2-\emph{circuits}.
\item  Each 1-circuit or 2-circuit is processed in a canonical way,
in order, to
give a segment of the canonical path $\gamma_\psi(G,G')$.
\end{itemize}
Thus, for each $(G,G')\in\Omega(\dvec)^2$ and each $\psi\in\Psi(G,G')$,
we define a (canonical) path $\gamma_{\psi}(G,G')$ from $G$ to $G'$.
For full details see~\cite[Section 2.1]{CDG}.

Next, the value of the flow along this path is defined as follows:
\begin{equation} f(\gamma_{\psi(G,G')}) = \frac{1}{|\Omega(\dvec)|^2\, |\Psi(G,G')|}
\label{f-def}
\end{equation}
and $f(p)=0$ for any other directed path from $G$ to $G'$.
Recall that $\mathcal{P}_{G,G'}$ is defined to be the set of all
directed paths from $G$ to $G'$, in the underlying digraph of the switch chain.
Summing $f(p)$ over all $p\in\mathcal{P}_{G,G'}$ gives
$1/|\Omega(\dvec)|^2 = \pi(G)\pi(G')$, as required for a valid flow.
This flow from $G$ to $G'$ has been equally shared among all paths in
$\{\gamma_{\psi}(G,G')\mid \psi\in\Psi(G,G')\}$.

\subsection{Encodings and the critical lemma}\label{ss:new-counting}

\begin{definition}\label{def:encoding}
An encoding $L$ of a graph $Z\in \Omega(\dvec)$ is an edge-labelled graph on
$n$ vertices, with edge labels in $\{ -1,1,2\}$,
such that
\begin{itemize}
\item[\emph{(i)}] the sum of edge-labels around vertex $j$ equals $d_j$ for all $j\in [n]$,
\item[\emph{(ii)}]  the edges with labels $-1$ or $2$ form a subgraph of one of the $10$~graphs
shown in Figure~\ref{f:zoo}.
(In the figure, ``\emph{?}'' represents a label which may be either $-1$ or $2$.)
\end{itemize}
An edge labelled $-1$ or $2$ is a \emph{defect edge}.  
\end{definition}

In the analysis of the multicommodity flow in~\cite{CDG,GS}, encodings play
an important role.  Given $G,G',Z\in\Omega(\dvec)$, 
identify each of $Z, G, G'$ with their symmetric 0-1 adjacency matrix
and define the matrix $L$ by  
\[ L + Z = G + G'.\]
Then $L$ corresponds to an edge-labelled graph,
which we also denote by $L$. 
It follows from the next result, proved in~\cite{GS}, that the edge-labelled graph $L$ 
satisfies the definition of encoding given above. We call $L$
the encoding of $Z$ with respect to $(G,G')$.  

\begin{lemma}[{\cite[Lemma 2.1(ii)]{GS}}]
\
Let $(G,G')\in\Omega(\dvec)^2$ and $\psi\in \Psi(G,G')$, and suppose that 
$(Z,Z')$ is
a transition of the switch chain which forms part of the canonical
path $\gamma_{\psi}(G,G')$.
Then the encoding $L$ of $Z$ with respect to $(G,G')$
has at most four defect edges, which must form a subgraph of one of the
10 possible edge-labelled graphs shown in Figure~\ref{f:zoo}.
\end{lemma}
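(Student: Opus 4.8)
The plan is to argue directly from the explicit construction of the canonical paths $\gamma_\psi(G,G')$ given in~\cite[Section~2.1]{CDG}, tracking exactly which edges of $Z$ deviate from the ``obvious'' reference graph at each step.

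First, unpack the definition of the encoding. Identifying $G$, $G'$, $Z$ with their symmetric $0/1$ adjacency matrices, the matrix $L := G + G' - Z$ has all entries in $\{-1,0,1,2\}$, and a pair $ij$ is a defect edge in precisely two situations: either $L_{ij}=2$, which holds iff $ij\in E(G)\cap E(G')$ but $ij\notin E(Z)$; or $L_{ij}=-1$, which holds iff $ij\in E(Z)$ but $ij\notin E(G)\cup E(G')$. In both cases property~(i) of Definition~\ref{def:encoding} is automatic, since each row sum of $G+G'-Z$ equals $d_j+d_j-d_j=d_j$; all the work is in property~(ii), namely showing the defect edges are few and locally structured.

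Next, recall the global structure of the path. The pairing $\psi$ decomposes the $2$-coloured symmetric difference $G\triangle G'$ into an ordered sequence of $1$-circuits and $2$-circuits, and $\gamma_\psi(G,G')$ processes them one at a time. The crucial invariant is that once circuit $t$ has been fully processed, the current graph agrees with $G'$ on every edge of circuits $1,\dots,t$ and with $G$ on every edge of circuits $t+1,\dots$; in particular every edge of such a ``settled'' state lies in $E(G)\cup E(G')$ and every edge of $E(G)\cap E(G')$ is present, so a settled state has \emph{no} defect edges at all. Consequently, any $Z$ possessing a defect edge occurs strictly inside the processing of a single circuit, and its only possible defect edges are those created transiently by the switch currently in progress, together with the edges of $E(G)\cap E(G')$ that switch is momentarily deleting. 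This localises the whole question to one circuit at a time.

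Finally, I would carry out the case analysis of the two circuit-processing subroutines. A $1$-circuit is ``unwound'' by a sequence of switches, each swapping an adjacent blue/red pair along the closed alternating walk; one checks that every intermediate $Z$ differs from its reference graph in at most two places, yielding defect configurations that are subgraphs of the simplest graphs in Figure~\ref{f:zoo}. The $2$-circuit is the hard case: its processing runs in two phases using an auxiliary ``shortcut'' (or detour) edge, so up to two transient edges outside $E(G)\cup E(G')$ can coexist with up to two temporarily deleted edges of $E(G)\cap E(G')$ --- this is exactly where the bound of four defect edges, and all ten pictures, come from. For each switch of each phase I would list the at most four affected edges with their $\{-1,1,2\}$ labels and verify the resulting labelled subgraph is one of the ten in Figure~\ref{f:zoo}, and also check the ``seam'' between the last switch of one circuit and the first switch of the next (no stacking, since the in-between state is settled). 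The main obstacle is precisely this exhaustive but delicate bookkeeping for $2$-circuits: one must rule out that any combination of shortcut edges, deleted common edges, and partially rotated circuit edges can coexist beyond what the ten figures permit, and handle shared endpoints carefully so that a putative fifth defect edge is in fact forced to coincide with one already counted.
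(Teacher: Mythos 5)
The first thing to note is that the paper does not prove this lemma at all: it is imported verbatim from \cite[Lemma~2.1(ii)]{GS}, and the surrounding text explicitly defers to that reference. So there is no in-paper argument to match your proposal against; the relevant comparison is with the construction in \cite{CDG,GS}. Your setup is correct and is indeed the right framework: the identification of the two defect types ($L_{ij}=2$ iff $ij\in E(G)\cap E(G')\setminus E(Z)$, $L_{ij}=-1$ iff $ij\in E(Z)\setminus(E(G)\cup E(G'))$), the observation that property~(i) of Definition~\ref{def:encoding} is automatic from the row sums of $G+G'-Z$, and the invariant that a ``settled'' state between circuits has no defect edges (all entries of $L$ land in $\{0,1\}$ there) are all right, and they correctly localise the problem to the interior of a single circuit's processing.

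The gap is that after this reduction you stop. The entire content of the lemma is the quantitative and structural claim --- at most \emph{four} defect edges, forming a subgraph of one of \emph{ten specific} labelled graphs --- and that claim lives entirely inside the case analysis you announce but do not perform (``I would carry out\dots'', ``I would list\dots and verify''). The reduction to a single circuit is the easy half; it establishes nothing about why four is the right bound rather than five or six, nor why the $-1$ and $2$ labels combine only in the patterns of Figure~\ref{f:zoo} (e.g.\ why two $2$-edges never share a vertex in the way some of the excluded configurations would require). Your description of the $1$-circuit case (``differs from its reference graph in at most two places'') also glosses over the fact that in \cite{CDG} a $1$-circuit is itself decomposed further into alternating cycles whose processing order and ``odd chord'' handling is exactly what controls the transient defects; this needs checking, not assertion. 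As it stands the proposal is a correct plan for reconstructing the proof of \cite[Lemma~2.1(ii)]{GS}, but not a proof.
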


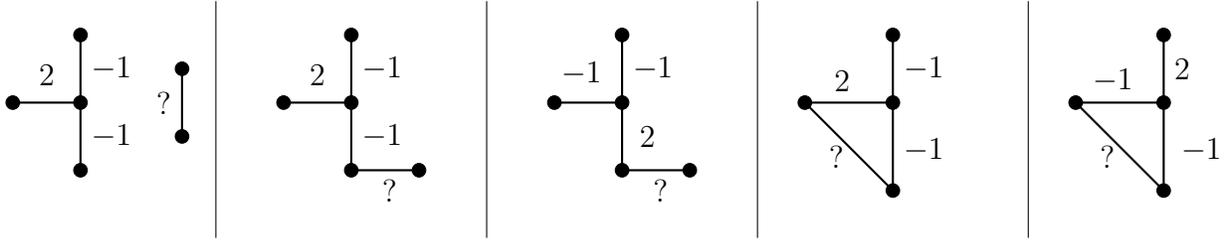
\begin{figure}[ht!]
\begin{center}
\begin{tikzpicture}[scale=0.9]
\draw [-,thick] (1.0,3) -- (1.0,5);
\draw [-,thick] (-0.0,4) -- (1.0,4);
\draw [-,thick]  (2.5,3.5) -- (2.5,4.5);
\draw [fill] (1.0,3) circle (0.1); \draw [fill] (1.0,5) circle (0.1);
\draw [fill] (1.0,4) circle (0.1); \draw [fill] (0.0,4) circle (0.1);
\draw [fill] (2.5,3.5) circle (0.1); \draw [fill] (2.5,4.5) circle (0.1);
\node [above] at (0.5,4.1) {$2$};
\node [right] at (1.0,4.5) {$-1$};
\node [right] at (1.0,3.5) {$-1$};
\node [left] at (2.5,4.0) {$?$};
\draw [-,thick] (4,4) -- (5,4);
\draw [-,thick] (5,5) -- (5,3) -- (6,3);
\draw [fill] (4,4) circle (0.1); \draw [fill] (5,4) circle (0.1);
\draw [fill] (5,5) circle (0.1); \draw [fill] (5,3) circle (0.1);
\draw [fill] (6,3) circle (0.1); 
\node [above] at (4.5,4.1) {$2$};
\node [right] at (5.0,4.5) {$-1$};
\node [right] at (5.0,3.5) {$-1$};
\node [right] at (5.3,2.7) {$?$};
\draw [-,thick] (8.0,4) -- (9.0,4);
\draw [-,thick] (9.0,5) -- (9.0,3) -- (10.0,3);
\draw [fill] (8.0,4) circle (0.1); \draw [fill] (9.0,4) circle (0.1);
\draw [fill] (9.0,5) circle (0.1); \draw [fill] (9.0,3) circle (0.1);
\draw [fill] (10.0,3) circle (0.1); 
\node [above] at (8.4,4.1) {$-1$};
\node [right] at (9.0,4.5) {$-1$};
\node [right] at (9.1,3.5) {$2$};
\node [right] at (9.3,2.7) {$?$};
\begin{scope}[shift={(10,3.5)}]
\draw [-,thick] (3,1.5) -- (3,-0.8) -- (1.7,0.5) -- (3,0.5);
\draw [fill] (3,1.5) circle (0.1); \draw [fill] (3,-0.8) circle (0.1);
\draw [fill] (1.7,0.5) circle (0.1); \draw [fill] (3,0.5) circle (0.1);
\node [above] at (2.25,0.5) {$2$};
\node [right] at (3.0,-0.2) {$-1$};
\node [right] at (3.0,1.0) {$-1$};
\node [right] at (1.9,-0.3) {$?$};
\end{scope}
\begin{scope}[shift={(10,3.5)}]
\draw [-,thick] (7,1.5) -- (7,-0.8) -- (5.7,0.5) -- (7,0.5);
\draw [fill] (7,1.5) circle (0.1); \draw [fill] (7,-0.8) circle (0.1);
\draw [fill] (5.7,0.5) circle (0.1); \draw [fill] (7,0.5) circle (0.1);
\node [above] at (6.25,0.5) {$-1$};
\node [right] at (7.1,-0.2) {$-1$};
\node [right] at (7.0,1.0) {$2$};
\node [right] at (5.9,-0.3) {$?$};
\end{scope}
\draw [-] (3,2) -- (3,5.5);
\draw [-] (7,2) -- (7,5.5);
\draw [-] (11,2) -- (11,5.5);
\draw [-] (15,2) -- (15,5.5);
\end{tikzpicture}
\caption{The defect edges in $L$ form a subgraph of one of these graphs,
as proved in~\cite[Lemma~2.1(ii)]{GS}. 
The ``?'' stands for a defect edge which may be labelled $-1$ or 2.}
\label{f:zoo}
\end{center}
\end{figure}

Note that this result holds for any degree sequence $\dvec$, as the
restrictions on $\dvec$ which are needed for the rapid mixing result
in~\cite{GS} only arise in the proof of the ``critical lemma''.

Given $Z\in \Omega(\dvec)$, say that encoding $L$ is \emph{consistent with $Z$} if
$L+Z$ only takes entries in $\{0,1,2\}$ (again identifying $L$ and $Z$ with their
adjacency matrices).  Equivalently, $L$ is consistent with $Z$ if any edge
which has label $-1$ in $L$ must also be an edge of $Z$.
Let $\mathcal{L}^\dagger(Z)$ be the set of encodings which are consistent with $Z$.
The task of the critical lemma is to prove that
\begin{equation}
\label{critical}
 |\mathcal{L}^\dagger(Z)|\,\,  \text{is at most polynomially larger than}
 \,\, |\Omega(\dvec)|
\end{equation}
for all $Z\in\Omega(\dvec)$ and for $\dvec$ which satisfy some condition
required to make the proof work.  (The set of encodings considered
in the critical lemma is slightly different in each of ~\cite{CDG,GS} and the
present work, but this does not matter as long as the set contains all
encodings $L$ which arise along a canonical path (that is, encodings
corresponding to some state $Z$ which belongs to a path $\gamma_{\psi}(G,G')$).
)

Arguing as in~\cite[Theorem~1.1]{GS}, for
example, it can be shown that the switch chain is rapidly mixing on
$\Omega(\dvec)$ for any $\dvec$ which satisfies the condition of the
critical lemma.  We encapsulate this argument into the following result,
which can then be used as a black box.

\begin{theorem}\label{thm:catherine}
Let $\dvec$ be a graphical degree sequence. 
 Write $M = M(\dvec)$ and $\Delta = \Delta(\dvec)$. 
Suppose that there exists 
a function $g(\dvec)$, which depends only on $\dvec$, such that
\[
 |\mathcal{L}^\dagger(Z)| \leq  g(\dvec)\, |\Omega(\dvec)|
\]
for all $Z\in\Omega(\dvec)$.
Then the mixing time $\tau(\varepsilon)$ of the switch chain on $\Omega(\dvec)$ 
satisfies
   \[
   \tau(\varepsilon) \leq \nfrac{1}{2} g(\dvec)\, \Delta^{14}\, M^3\,
   \Big( \nfrac{1}{2} M\log M + \log(\varepsilon^{-1})\Big).
   \]
\label{Lbound}
\end{theorem}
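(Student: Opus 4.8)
\noindent{\bf Proof idea.}\
The plan is to run the multicommodity-flow analysis of~\cite{CDG} (in the form extended in~\cite{GS}) almost verbatim for the flow $f$ of~(\ref{f-def}), but to invoke the hypothesis $|\mathcal{L}^\dagger(Z)|\le g(\dvec)\,|\Omega(\dvec)|$ in place of the ``critical lemma'' used there. By Sinclair's inequality~(\ref{flowbound}) it suffices to bound three quantities: $\pi^*$, the length $\ell(f)$ of the longest flow-carrying path, and the maximum load $\rho(f)$. Two of these are immediate. Since $\pi$ is uniform on $\Omega:=\Omega(\dvec)$ we have $\pi^*=1/|\Omega|$, and the configuration model gives $|\Omega|\le(M-1)!!\le M^{M/2}$ (each simple graph with degree sequence $\dvec$ arises from at least one perfect matching on the $M$ stubs), so $\log(1/\pi^*)\le\nfrac12 M\log M$. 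For $\ell(f)$, the canonical path $\gamma_\psi(G,G')$ constructed in~\cite[Section 2.1]{CDG} decomposes $G\triangle G'$ into $1$- and $2$-circuits of total length $|G\triangle G'|\le M$ and processes each circuit using a number of switches linear in its length, so $\ell(f)\le c_1M$ for an explicit absolute constant $c_1$.

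The substantive step is bounding $\rho(f)=\max_e f(e)/Q(e)$. For a switch transition $e=(Z,Z')$ with $Z\ne Z'$ we have $Q(e)=\pi(Z)P(Z,Z')=1/\big(3a(\dvec)|\Omega|\big)$, and $a(\dvec)\le\binom{M/2}{2}<M^2/8$ by~(\ref{ad}). Unpacking~(\ref{f-def}),
\[
f(e)\,|\Omega|^2=\sum_{\substack{(G,G')\in\Omega^2,\ \psi\in\Psi(G,G'):\\ e\in\gamma_\psi(G,G')}}\frac{1}{|\Psi(G,G')|},
\]
and the goal is to show this is at most $c_2\,\Delta^{14}\,|\mathcal{L}^\dagger(Z)|$ for an absolute constant $c_2$. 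This is precisely what the reconstruction arguments of~\cite[Section 2]{CDG} deliver: to each term one associates the encoding $L:=G+G'-Z$, which lies in $\mathcal{L}^\dagger(Z)$ and whose at most four defect edges form a subgraph of one of the graphs in Figure~\ref{f:zoo}; and one shows that $(G,G')$, together with the local data of $\psi$ and the index of the performed switch, can be recovered from $(Z,L)$ up to a multiplicity $\Delta^{O(1)}$ — the ambiguity being confined to the bounded neighbourhood of the defect edges, where one must identify $O(1)$ edges of $Z$ at each defect vertex, with at most $\Delta$ choices each. These lemmas are proved in~\cite{CDG,GS} for an arbitrary degree sequence (only the critical lemma there needs a restriction on $\dvec$), so I may quote them directly, and carrying the count through as in~\cite{CDG} gives the exponent $14$. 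Applying the theorem's hypothesis then yields $f(e)\,|\Omega|^2\le c_2\Delta^{14}g(\dvec)|\Omega|$, hence
\[
\rho(e)=3a(\dvec)|\Omega|\,f(e)\le 3a(\dvec)\,c_2\Delta^{14}g(\dvec)\le\nfrac{3c_2}{8}\,g(\dvec)\,\Delta^{14}\,M^2 .
\]
Substituting $\rho(f)\le\nfrac{3c_2}{8}g(\dvec)\Delta^{14}M^2$, $\ell(f)\le c_1M$ and $\log(1/\pi^*)\le\nfrac12 M\log M$ into~(\ref{flowbound}), and using that the explicit constants of~\cite{CDG} satisfy $\nfrac38\,c_1c_2\le\nfrac12$, gives exactly the claimed bound on $\tau(\varepsilon)$.

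The step I expect to carry all the difficulty is the reconstruction estimate $\sum 1/|\Psi(G,G')|\le c_2\Delta^{14}|\mathcal{L}^\dagger(Z)|$, i.e.\ that the map $(G,G',\psi,\text{switch})\mapsto(Z,L)$ is, in the relevant weighted sense, at most $\Delta^{O(1)}$-to-one, and pinning down why the exponent is $14$. However, that technical work is entirely inherited from~\cite{CDG,GS}, so in the present proof there is essentially nothing to prove there: the only genuinely new ingredient is substituting the hypothesis for the critical lemma, and the rest is the bookkeeping above — checking the constants $c_1,c_2$ and verifying that they assemble into the leading coefficient $\nfrac12$ and the exponents $14$ and $3$ of the statement, which I would read off carefully from~\cite{CDG} rather than re-derive.
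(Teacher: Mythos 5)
Your proposal is correct and follows essentially the same route as the paper: bound $\log(1/\pi^*)\le \nfrac{1}{2}M\log M$ and $\ell(f)\le M/2$, bound $1/Q(e)$ via $a(\dvec)\le\binom{M/2}{2}$, quote the flow bound $f(e)\le \Delta^{14}|\mathcal{L}^\ast(Z)|/|\Omega(\dvec)|^2$ from~\cite[Lemma~2.3]{GS} (valid for arbitrary $\dvec$, since only the critical lemma needs a degree restriction), replace $\mathcal{L}^\ast(Z)$ by the superset $\mathcal{L}^\dagger(Z)$, apply the hypothesis, and substitute into~(\ref{flowbound}). The paper takes the explicit constants $c_1=\nfrac{1}{2}$ and $c_2=1$, which assemble into the stated leading coefficient exactly as you anticipate.
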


\begin{proof}
We follow the structure of the argument used to prove~\cite[Theorem~1.1]{GS}, 
working towards an application of (\ref{flowbound}).
As noted in~\cite[Equation (1)]{GS}, 
\[ |\Omega(\dvec)| \leq
          \exp\left( \nfrac{1}{2} \, M\log M \right).
\]
Hence, as the stationary distribution $\pi$ is uniform,
 the smallest stationary probability $\pi^\ast$ satisfies
\begin{equation}
\label{pi-min}
\log(1/\pi^\ast) = \log(|\Omega(\dvec)|) \leq \nfrac{1}{2} M\log M. 
\end{equation}
Next, $\ell(f)\leq M/2$ since each transition along a canonical path from $G$ to $G'$
replaces an edge of $G$ by an edge of $G'$.

Let $e=(Z,Z')$ be a transition of the switch chain. Then
\[ 1/Q(e) = 6\, a(\dvec)\, |\Omega(\dvec)| \leq M^2\, |\Omega(\dvec)|,\]
using (\ref{ad}).
(Note, the factor $|\Omega(\dvec)|$ was missing in the corresponding bound
in the proof of~\cite[Theorem~1.1]{GS}, so we correct that typographical error here.)
 
The argument in~\cite{GS} used a set of encodings $\mathcal{L}^\ast(Z)$ which 
is a proper subset of $\mathcal{L}^\dagger(Z)$.  Specifically, $\mathcal{L}^\ast(Z)$ is
the set of all encodings in $\mathcal{L}^\dagger(Z)$ which also satisfy the
conclusions of~\cite[Lemma~2.2]{GS}.  Then
\cite[Lemma~2.3]{GS} proves that for the flow $f$ and transition $e$,
\[  f(e) \leq \Delta^{14}\, \frac{|\mathcal{L}^\ast(Z)|}{|\Omega(\dvec)|^2}.
\]
(Again, we emphasise that this is still true for degree sequences that do not
satisfy the condition of the rapid mixing theorem from~\cite{GS},
since that condition was only used in the proof of the ``critical lemma''.)
We may replace $\mathcal{L}^\ast(Z)$ with $\mathcal{L}^\dagger(Z)$ to obtain
a weaker upper bound on $f(e)$, and hence
\[ \rho(f) = \max_e \frac{f(e)}{Q(e)} \leq \Delta^{14} M^2\, \max_{Z\in \Omega(\dvec)} \frac{|\mathcal{L}^\dagger(Z)|}{|\Omega(\dvec)|} \leq g(\dvec)\, \Delta^{14}\, M^2.\]
Substituting this bound, the bound $\ell(f)\leq M/2$ and (\ref{pi-min}) 
into (\ref{flowbound}) completes the proof.
\end{proof}

\subsection{Proof of Theorem~\ref{thm:mixing}}

Next we give a new counting proof, which establishes the ``critical lemma''
 when $\dvec$ is 8-stable.

\begin{lemma}\label{lem:counting}
Assume that the graphical degree sequence $\dvec$ is $(8,\alpha)$-stable
for some nonnegative real number $\alpha$. Then for any $Z\in\Omega(\dvec)$,
\[  |\mathcal{L}^\dagger(Z)|\le 24\, n^6 M(\dvec)^{\alpha}\, |\Omega(\dvec)|.\] 
\end{lemma}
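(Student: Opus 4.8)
The plan is to bound $|\mathcal{L}^\dagger(Z)|$ by grouping encodings according to the graph $Y := L + Z$ they produce (with the usual identification of graphs and $0$-$1$ matrices; recall $L$ is consistent with $Z$ precisely when $Y$ has entries in $\{0,1,2\}$). For a fixed encoding $L$, every vertex $j$ still has weighted degree $d_j$ in $L$, and $Z\in\Omega(\dvec)$, so $Y = L+Z$ has $j$-th row sum $2d_j$. The entries of $Y$ equal to $2$ correspond to edges lying in both $L$ (with some label) and $Z$; by stripping those off we obtain a \emph{simple} graph. More precisely, the $\{0,1\}$-entries of $Y$ form a simple graph $H$ on $[n]$, and $Y$ is $H$ together with a set $D$ of ``doubled'' edges. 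The degree sequence of $H$ is $\dvec'$, where $d'_j = 2d_j - 2\,(\text{number of doubled edges at }j)$; in particular $\dvec'$ is graphical (witnessed by $H$) and differs from $\dvec$ only through the doubled edges.

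The key structural input is the restriction in Definition~\ref{def:encoding}(ii): the defect edges of $L$ (those labelled $-1$ or $2$) form a subgraph of one of the $10$ graphs in Figure~\ref{f:zoo}, each of which has at most $4$ edges on at most $5$ vertices. An edge of $Y$ gets doubled only if it is an edge of $Z$ that also carries label $2$ in $L$, i.e.\ a defect edge; and an edge of $H$ differs from the corresponding behaviour of $Z$ only at defect edges. So $\|\dvec' - \dvec\|_1 \le 2\cdot(\text{number of defect edges}) \le 8$, putting $\dvec'$ within reach of $(8,\alpha)$-stability. Hence $|\Omega(\dvec')| \le M(\dvec)^{\alpha}|\Omega(\dvec)|$.

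Now I count. First choose the ``defect pattern'': the set of defect edges of $L$ together with their labels. Since these edges live on a vertex set of size at most $5$, choosing those $\le 5$ vertices costs at most $n^5$, and choosing the induced labelled configuration on them costs at most an absolute constant (one of the $10$ figures, with each ``?'' being one of $2$ choices — this is where the factor like $24$ comes from). Actually it is cleaner to bound: at most $n^6$ ways to choose an ordered list describing the $\le 4$ defect edges and their labels, times a small constant for the topology; I would organize the bookkeeping so the total prefactor is $24\,n^6$. Given the defect pattern, the degree sequence $\dvec'$ of the simple graph $H$ is determined. The encoding $L$ is then \emph{reconstructible} from the pair (defect pattern, $H$): indeed $L$ equals $H$ outside the defect edges, with the prescribed labels $-1,2$ on the defect edges, and one checks the label-$1$ edges of $L$ are exactly the edges of $H$ not among the defects. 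So the map $L \mapsto (\text{defect pattern}, H)$ is injective, giving
\[
  |\mathcal{L}^\dagger(Z)| \;\le\; 24\, n^6 \cdot \max_{\dvec'} |\Omega(\dvec')| \;\le\; 24\, n^6\, M(\dvec)^{\alpha}\, |\Omega(\dvec)|,
\]
where the maximum is over graphical $\dvec'$ with $\|\dvec'-\dvec\|_1\le 8$, and the last step is $(8,\alpha)$-stability.

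The main obstacle I anticipate is verifying carefully that $L$ is genuinely recoverable from $(\text{defect pattern},H)$ — i.e.\ that no information is lost. The subtle point is that a defect edge labelled $2$ that happens to be an edge of $Z$ shows up in $Y$ as entry $2$, while a defect edge labelled $2$ that is \emph{not} an edge of $Z$ would show up as entry $1$, i.e.\ hidden inside $H$ — but consistency of $L$ with $Z$ is stated only for label $-1$ edges, so I need to check whether label-$2$ non-edges of $Z$ can occur and, if so, ensure the defect pattern records enough (edge set \emph{and} labels of all defect edges, not just which entries of $Y$ are $2$) to pin down $L$. Handling this correctly — and confirming the crude count of configurations on $\le 5$ vertices really fits under $24\,n^6$ — is the delicate part; everything else is routine bookkeeping with the $\ell_1$-bound feeding into Definition~\ref{def:k-stability}.
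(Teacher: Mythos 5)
Your high-level plan --- pay at most $24\,n^6$ for the defect pattern, and bound the number of completions by $|\Omega(\dvec')|$ for some graphical $\dvec'$ with $\norm{\dvec'-\dvec}_1\le 8$, then invoke $(8,\alpha)$-stability --- is exactly the structure of the paper's proof, and your accounting of the $24n^6$ prefactor is essentially the paper's (note the configurations in Figure~\ref{f:zoo} have up to $6$ vertices, not $5$, which is why $n^6$ rather than $n^5$ is needed for the full $4$-edge pattern). However, the detour through $Y=L+Z$ introduces a genuine error. An entry of $Y$ equals $2$ precisely when either $L_{uv}=2$ and $Z_{uv}=0$, or $L_{uv}=1$ and $Z_{uv}=1$; the second type is \emph{not} a defect edge, and there can be up to $d_j$ of them at a vertex $j$ (e.g.\ if $L$ has no defects and its label-$1$ edges coincide with $Z$, then $Y=2Z$ and your $H$ is empty). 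Consequently your simple graph $H$ (the $1$-entries of $Y$, i.e.\ $L\triangle Z$ restricted to label-$1$ edges) has degree sequence $d'_j=2d_j-2\,(\text{number of doubled edges at }j)$, which is neither determined by the defect pattern nor within $\ell_1$-distance $8$ of $\dvec$. So the step ``hence $|\Omega(\dvec')|\le M(\dvec)^\alpha|\Omega(\dvec)|$'' does not apply to your $\dvec'$, and the count collapses. (Your closing worry about label-$2$ edges is also resolved the wrong way: consistency means $L+Z$ has entries in $\{0,1,2\}$, which forces every label-$2$ edge of $L$ to be a \emph{non}-edge of $Z$, so those contribute $2$-entries of $Y$ that do not come from $Z$ at all.)

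The fix is to drop $Y$ entirely and decompose $L$ itself, which is what the paper does. Once the labelled defect set $\mathcal{E}$ is fixed, the remaining edges of $L$ all carry label $1$ and form a simple graph whose degree sequence is $d'_j=d_j-x_j$, where $x_j$ is the sum of the defect labels at $j$; this $\dvec'$ \emph{is} determined by $\mathcal{E}$, the completion is literally an element of $\Omega(\dvec')$ (so no reconstruction argument is needed), and a check over the ten configurations in Figure~\ref{f:zoo} and their subgraphs gives $\norm{\dvec-\dvec'}_1\le 8$ in every case. With that substitution your argument becomes the paper's proof.
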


\begin{proof}
We know that $|\mathcal{L}^\dagger(Z)|$ is bounded from above by the number of edge-labelled graphs 
satisfying conditions (i) and (ii) from Definition~\ref{def:encoding}.
First, we bound the number of ways to choose the defect edges in $Z$.
These defect edges must form a subgraph of one of the 10 edge-labelled graphs shown in
Figure~\ref{f:zoo}, recalling that ``?'' can be either $-1$ or 2. 
Each of these 10 graphs has 4 edges and at most 6 vertices.
If $H$ has $\ell$ vertices then the number of injections $\varphi:V(H)\to [n]$
is at most $n^\ell$. 
It follows that the number of ways to choose $H$ and $\varphi$
is at most
\[
10\Big( n^6 + 4 n^5 + 6 n^4 + 4 n^2  + 1\Big) 
  \leq 24 n^6
\]
since $n\geq 4$ (or no switch is possible).

Now let $\mathcal{E}$ be the set of chosen (labelled) defect edges.
We bound the number of ways to complete $\mathcal{E}$ to an encoding $L\in\mathcal{L}^{\dagger}(Z)$. 
All edges in $L\setminus \mathcal{E}$ are labelled 1. For all $j\in [n]$,
the number of edges in $L\setminus \mathcal{E}$ incident with vertex $j$ is equal to 
$d'_j=d_j - x_j$,
where $x_j$ is the sum of edge-labels from $\mathcal{E}$ around vertex $j$. 
Let $\dvec'=(d'_1,\ldots,d'_n)$ and observe that $\norm{\dvec'}_1$ is even. 
The number of encodings $L\in\mathcal{L}^{\dagger}(Z)$ 
such that the set of defect edges given 
by ${\mathcal E}$ is at most $|\Omega(\dvec')|$. 
Checking through all possible graphs $H$, we confirm that $\norm{\dvec-\dvec'}_1\le 8$ always. 
(As an example, let $H$ be the first option in Figure~\ref{f:zoo},
with ``?'' replaced by~2.  There are three vertices with $d'_j = d_j + 2$,
two with $d'_j = d_j-1$, and $d'_j=d_j$ for all other vertices.
Hence $\norm{\dvec-\dvec'}_1= 8$ in this case. The subgraph formed
from $H$ by deleting one of the $(-1)$-defect edges also satisfies
$\norm{\dvec-\dvec'}_1 = 8$.)
Since $\dvec$ is $(8,\alpha)$-stable, we have $|\Omega(\dvec')| \le M(\dvec)^{\alpha}\, |\Omega(\dvec)|$. 
The result follows as there are at most $24n^6$ ways to fix ${\cal E}$.
\end{proof}

\begin{proof}[Proof of Theorem~\ref{thm:mixing}.]
This follows by combining Theorem~\ref{thm:catherine} and Lemma~\ref{lem:counting}.
\end{proof}

\section{Stable degree sequences and proof of Theorem~\ref{thm:stable}}
\label{sec:stable}

Assume that $\dvec$ is graphical. For positive integers $k$, let 
\begin{equation}
\label{eq:Nk-def}
 \N_k(\dvec)=\{\dvec'\in {\mathbb N}^n:\ \norm{\dvec'}_1 \equiv 0\!\!\! \pmod 2,\quad \norm{\dvec'-\dvec}_1\le k\}.
\end{equation}

\begin{lemma}\label{lem:transitive}
Suppose that every graphical degree sequence $\dvec'\in \N_6(\dvec)$ is $(2,\alpha)$-stable. 
Then $\dvec$ is $(8,4\alpha)$-stable. 
\end{lemma}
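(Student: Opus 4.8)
The plan is to get from $\dvec$ to an arbitrary graphical $\dvec'$ with $\norm{\dvec'-\dvec}_1 \le 8$ by a short chain of intermediate graphical degree sequences, each differing from the previous one by $\ell_1$-distance at most $2$, and to apply the hypothesised $(2,\alpha)$-stability at each step. Given such a chain of length $r$, telescoping gives $|\Omega(\dvec')| \le M(\dvec_{r-1})^\alpha \cdots M(\dvec_0)^\alpha \, |\Omega(\dvec)|$; since every $M(\dvec_i)$ differs from $M=M(\dvec)$ by at most the total $\ell_1$-movement (a constant, at most $8$), each factor $M(\dvec_i)^\alpha$ is at most $M^\alpha$ times a bounded constant, and absorbing constants one obtains a bound of the form $M^{r\alpha}\,|\Omega(\dvec)|$ up to lower-order factors. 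To land at exponent $4\alpha$ we want $r\le 4$ steps, which is exactly what an $\ell_1$-budget of $8$ allows if each step costs $2$.

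The key point to nail down is that the \emph{intermediate} degree sequences can be chosen to be graphical, and to lie in $\N_6(\dvec)$ so that the hypothesis applies to them. First I would observe that $\norm{\dvec'-\dvec}_1 = \sum_j |d'_j - d_j|$ is even (both $\norm{\dvec}_1$ and $\norm{\dvec'}_1$ are even), so it is $0,2,4,6$ or $8$; if it is $0$ or $2$ the lemma is immediate (one step, exponent $\le\alpha\le 4\alpha$). In general, decompose the signed difference $\dvec'-\dvec$ into $r = \tfrac12\norm{\dvec'-\dvec}_1 \le 4$ unit ``$+1$ here, $-1$ there'' moves, i.e.\ write $\dvec' - \dvec = \sum_{t=1}^{r}(e_{a_t} - e_{b_t})$ where $e_i$ is the $i$-th standard basis vector. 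Setting $\dvec^{(0)} = \dvec$ and $\dvec^{(t)} = \dvec^{(t-1)} + e_{a_t} - e_{b_t}$, each $\dvec^{(t)}$ has even sum, has $\norm{\dvec^{(t)} - \dvec^{(t-1)}}_1 = 2$, and has $\norm{\dvec^{(t)}-\dvec}_1 \le 2t \le 8$, with $\norm{\dvec^{(t)}-\dvec}_1\le 6$ once we arrange the moves so that the last (eighth) unit of movement is absorbed — more carefully, each intermediate sequence $\dvec^{(1)},\dots,\dvec^{(r-1)}$ satisfies $\norm{\dvec^{(t)}-\dvec}_1 \le 2(r-1) \le 6$, and $\dvec^{(r)} = \dvec'$. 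So every intermediate sequence lies in $\N_6(\dvec)$.

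The main obstacle is graphicality of the intermediate sequences: an arbitrary unit-move decomposition need not keep every $\dvec^{(t)}$ graphical, even when $\dvec$ and $\dvec'$ both are. I would handle this by choosing the decomposition with care — for instance, order the coordinates by the Erd\H{o}s--Gallai / Havel--Hakimi criterion and move mass from coordinates that need to decrease to coordinates that need to increase in a way that preserves graphicality at each step (a standard ``don't violate Erd\H{o}s--Gallai'' argument: since $\dvec$ and $\dvec'$ are both graphical and close, one can interpolate through graphical points). Alternatively, and more robustly, one can work directly with a fixed graph realising $\dvec'$ and ``undo'' the defect one unit at a time, or invoke that the set of graphical sequences within a small $\ell_1$-ball is connected under unit moves through graphical sequences. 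Once graphicality of each $\dvec^{(t)}$ is secured, the hypothesis applies: each $\dvec^{(t)}$ (for $0\le t\le r-1$) lies in $\N_6(\dvec)$ and is graphical, hence $(2,\alpha)$-stable, so $|\Omega(\dvec^{(t+1)})| \le M(\dvec^{(t)})^\alpha |\Omega(\dvec^{(t)})| \le (M+8)^\alpha |\Omega(\dvec^{(t)})|$. Multiplying the $r\le 4$ inequalities and bounding $(M+8)^{r\alpha} \le M^{4\alpha}$ times a universal constant (or, more cleanly, noting $(M+8)^{r\alpha}\le M^{4\alpha}$ outright for $M$ not too small, with the small cases handled trivially), we get $|\Omega(\dvec')| \le M^{4\alpha}|\Omega(\dvec)|$ for all graphical $\dvec'$ with $\norm{\dvec'-\dvec}_1\le 8$, i.e.\ $\dvec$ is $(8,4\alpha)$-stable.
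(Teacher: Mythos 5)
Your strategy is exactly the one the paper uses: walk from $\dvec$ to $\dvec'$ through at most four graphical degree sequences, each at $\ell_1$-distance at most $2$ from its predecessor and at most $6$ from $\dvec$, apply the hypothesised $(2,\alpha)$-stability at each of the at most four steps, and telescope. However, your explicit decomposition $\dvec'-\dvec=\sum_{t=1}^{r}(\boldsymbol{e}_{a_t}-\boldsymbol{e}_{b_t})$ exists only when $\norm{\dvec'}_1=\norm{\dvec}_1$, since each summand preserves the coordinate sum. The lemma must also cover perturbations that change the total degree, such as $\dvec'=\dvec+\boldsymbol{e}_1+\boldsymbol{e}_2+\boldsymbol{e}_3+\boldsymbol{e}_4$, and these genuinely arise in the application (the defect configurations in Lemma~\ref{lem:counting} change the degree sum). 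The fix is trivial --- allow each step to be any of $\pm\boldsymbol{e}_a\pm\boldsymbol{e}_b$ or $\pm2\boldsymbol{e}_a$, each of $\ell_1$-cost $2$ --- but as written your chain cannot be constructed for such $\dvec'$.

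The point you rightly identify as the crux, graphicality of the intermediate sequences, is left at the level of a sketch (``interpolate without violating Erd\H{o}s--Gallai'', ``invoke connectivity under unit moves''). To be fair, the paper's own proof is no more detailed: it simply asserts that a chain $\dvec=\dvec^{(0)},\dvec^{(2)},\dvec^{(4)},\dvec^{(6)},\dvec^{(8)}=\dvec'$ of graphical sequences with the required properties exists. One clean way to close this is the fact that the set of degree sequences of simple graphs on $[n]$ is a jump system (Bouchet--Cunningham); its two-step exchange axiom, combined with the observation that every graphical sequence has even sum, produces precisely a path of graphical sequences from $\dvec$ to $\dvec'$ whose steps have $\ell_1$-length $2$ and which monotonically decreases the distance to $\dvec'$, so that every intermediate lies in $\N_6(\dvec)$. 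Finally, note that applying $(2,\alpha)$-stability at $\dvec^{(t)}$ gives a factor $M(\dvec^{(t)})^\alpha\le\bigl(M(\dvec)+6\bigr)^\alpha$ rather than $M(\dvec)^\alpha$, so the telescoped bound is $\bigl(M(\dvec)+6\bigr)^{4\alpha}\,|\Omega(\dvec)|$; your ``absorb the constant'' step is therefore genuinely needed and slightly weakens the exponent, but the paper makes the identical elision and it is harmless for every downstream use of the lemma.
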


\begin{proof}
Assume that $\dvec'$ is a degree sequence such that $\norm{\dvec-\dvec'}_1\leq 8$.
Then we can find $\dvec'=\dvec^{(8)}, \dvec^{(6)},\dvec^{(4)},\dvec^{(2)},\dvec^{(0)}=\dvec,$ such that $\dvec^{(i)}\in \N_i(\dvec)$ and $\norm{\dvec^{(i+1)}-\dvec^{(i)}}_1\leq 2$ for every $i\in\{0,2,4,6\}$. 
By assumption, $|\Omega(\dvec^{(i+1)})|\le M^{\alpha}\, |\Omega(\dvec^{(i)})|$ for every $i\in\{0,2,4,6\}$. Consequently $|\Omega(\dvec')|\le M^{4\alpha}|\Omega(\dvec)|$, and the assertion follows.
\end{proof}

A directed 2-path in a graph $G$ is an ordered triple of distinct vertices $(a,b,c)$ such that $ab$ and $bc$ are edges in $G$.
For any graph $G$ with degree sequence $\dvec$, and any $v\in G$, the number of directed 2-paths which start at $v$ (that is, with $v=a$) is at most 
\begin{equation}
\sum_{i=1}^{d_v} (d_{\mu(i)}-1)  \leq
\sum_{i=1}^{d_{\mu(1)}} (d_{\mu(i)}-1) 
\le J(\dvec)-\Delta(\dvec). \label{2paths}
\end{equation}

The key result of this section is the following, which will be proved in Section~\ref{s:deferred}.

\begin{theorem}\label{thm:main}
If $M(\dvec)>2 J(\dvec)+6\Delta(\dvec)+2$  then $\dvec$ is $(2,2)$-stable.
\end{theorem}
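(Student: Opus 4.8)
The plan is to fix a graphical $\dvec'$ with $\norm{\dvec'-\dvec}_1\le 2$ and prove $|\Omega(\dvec')|\le M(\dvec)^2\,|\Omega(\dvec)|$ by a switching (double-counting) argument. Since $\dvec'=\dvec$ is trivial, and $\norm{\dvec}_1,\norm{\dvec'}_1$ are both even, $\dvec'$ arises from $\dvec$ by exactly one of three moves, where we allow $i=j$: \emph{deletion}, $\dvec'=\dvec-e_i-e_j$; \emph{addition}, $\dvec'=\dvec+e_i+e_j$; or \emph{shift}, $\dvec'=\dvec+e_i-e_j$. Vertices of degree $0$ can be discarded, so we may assume $n\le M$ and hence $\Delta\le M/2$, writing $M=M(\dvec)$, $\Delta=\Delta(\dvec)$, $J=J(\dvec)$. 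For the fixed move type I will specify a small family $\Phi$ of \emph{local operations}, each of which rewires at most a bounded number (six) of edges and carries a graph $G'\in\Omega(\dvec')$ to a graph in $\Omega(\dvec)$. Counting the pairs $(G',\varphi)$ with $\varphi\in\Phi$ applicable to $G'$ in two ways gives
\[
|\Omega(\dvec')|\ \le\ \#\{(G',\varphi):\varphi(G')\text{ defined}\}\ =\ \sum_{G\in\Omega(\dvec)}\#\{(G',\varphi):\varphi(G')=G\}\ \le\ M^2\,|\Omega(\dvec)|,
\]
provided \emph{(a)} every $G'\in\Omega(\dvec')$ admits at least one operation of $\Phi$, and \emph{(b)} every $G\in\Omega(\dvec)$ is the image of at most $M^2$ such pairs. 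This is precisely $(2,2)$-stability.

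\textbf{The in-degree bound (b) is routine.} If $\varphi(G')=G$ then $G\triangle G'$ is one of finitely many small graphs on $O(1)$ vertices, all involving the at most four distinguished vertices; reconstructing $G'$ from $G$ amounts to choosing a bounded number of edges and non-edges of $G$ incident to those vertices, and for the dominant operation of each $\Phi$ this is essentially the choice of an unordered pair of edges of $G$. Using $\deg_G(v)\le\Delta\le M/2$, the number of such choices is at most $\binom{M/2}{2}+O(M)\le M^2$. (The bound depending on $\dvec$ only through $M^2$ is why Theorem~\ref{thm:stable}(a) needs the larger constants of \Condone; that is handled separately via Lemma~\ref{lem:transitive} and costs nothing here.)

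\textbf{The out-degree bound (a) is the crux, and is where $M>2J+6\Delta+2$ is used}, via the bound~(\ref{2paths}) on directed $2$-paths. For the \emph{deletion} type the operations in $\Phi$ are ``add the edge $ij$'' (defined unless $ij\in E(G')$) and the \emph{hinge} move which deletes an edge $ab$ with $a,b\notin\{i,j\}$ and adds $ia,jb$, defined when $ia,jb\notin E(G')$. Counting over oriented edges of $G'$, the number ineligible for a hinge is at most the number of directed $2$-paths from $i$, plus those from $j$, plus the oriented edges meeting $\{i,j\}$, which by~(\ref{2paths}) is at most $2(J-\Delta)+4\Delta=2J+2\Delta$, comfortably below $2|E(G')|=M-2$. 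For the \emph{shift} and \emph{addition} types the analogous one-edge operations (relocate one edge of $i$ onto $j$; delete the edge $ij$; or a single hinge-down) can genuinely fail, but only in rigid configurations: $N_{G'}(i)\subseteq N_{G'}(j)$ up to the vertices $i,j$, or $N_{G'}(i)$ inducing a clique, or $N_{G'}(i)$ and $N_{G'}(j)$ completely joined off the diagonal. Against each such obstruction I will add to $\Phi$ a longer fallback which reroutes through a ``distant'' edge $cd$, with $c,d$ chosen outside the offending dense neighbourhoods; the set of forbidden $c,d$ (and forbidden $cd$) is again controlled by~(\ref{2paths}) and, it turns out, has size at most roughly $2J+6\Delta$, so $M>2J+6\Delta+2$ is exactly what guarantees a usable $cd$. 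The main obstacle of the whole proof will be verifying that \emph{every} rigid obstruction is defeated by such a fallback whose rewiring edges can in fact be chosen, and keeping its in-degree contribution within the $M^2$ budget; the coefficient $6$ of $\Delta$ and the additive $2$ in the hypothesis are precisely the worst case over these operation types, with the degenerate sub-cases ($i=j$, or $\min(d_i,d_j)$ small) only shrinking the forbidden sets and hence being easier.

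\textbf{Conclusion.} Running this argument separately for the three move types gives $|\Omega(\dvec')|\le M^2|\Omega(\dvec)|$ in every case, i.e.\ $\dvec$ is $(2,2)$-stable, which is Theorem~\ref{thm:main}; combined with Lemma~\ref{lem:transitive} (after checking that \Condone\ forces $M(\dvec')>2J(\dvec')+6\Delta(\dvec')+2$ for all $\dvec'\in\N_6(\dvec)$) this yields the $(8,8)$-stability asserted in Theorem~\ref{thm:stable}(a).
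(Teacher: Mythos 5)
Your overall strategy is the paper's: split into the three move types (addition, deletion, shift of a unit of degree), define a switching carrying $\Omega(\dvec')$ into $\Omega(\dvec)$, lower-bound the number of available switchings per $G'$ by inclusion--exclusion using the directed $2$-path bound~(\ref{2paths}), and upper-bound the number of preimages per $G$ by $M^2$. Your deletion case is complete and coincides with the paper's Case~2 (the $(i^+,j^+)$-switching of Figure~\ref{f:2a2b}), with the same count $M-2-4\Delta-2(J-\Delta)>0$.

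The gap is in the addition and shift cases, which you yourself flag as ``the main obstacle.'' There you propose a hybrid design: try a one-edge primary operation first, and only when $G'$ is in a ``rigid obstruction'' (e.g.\ $N_{G'}(i)$ completely joined to $N_{G'}(j)$) fall back to a longer rewiring through a distant edge $cd$. This forces you to (i) classify all rigid configurations, (ii) prove each fallback is available under its rigidity hypothesis, and (iii) control the in-degree of $G$ over the union of all operation types --- none of which you carry out, and the claim that the forbidden set for $cd$ ``turns out'' to have size roughly $2J+6\Delta$ is asserted, not proved. The paper avoids this entirely by using a \emph{single uniform} switching per case --- for addition, delete $iu_1$, $ju_4$ and a third edge $u_2u_3$ and add $u_1u_2$, $u_3u_4$ (Figure~\ref{f:1a1b}) --- and showing directly that the number of available choices is at least $(d_i+1)(d_j+1)\bigl(M-2J-6\Delta-2\bigr)>0$: the subtracted terms are $8\Delta+4$ for vertex collisions and $2(J-\Delta)$ for the adjacency conditions, with no case analysis on the structure of $G'$. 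If you drop the primary/fallback dichotomy and make your ``fallback'' the only operation, your sketch becomes the paper's proof; as written, the crux of the theorem (that the $6\Delta+2$ in the hypothesis suffices in the worst case, which is the addition case with $i\neq j$) is not established.
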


\medskip

Using Theorem~\ref{thm:main}, we can now prove Theorem~\ref{thm:stable}(a).
(The proof of Theorem~\ref{thm:stable}(b) is deferred to the end of
Section~\ref{sec:comparison}.)

\begin{proof}[Proof of Theorem~\ref{thm:stable}(a)]
First suppose that $\dvec$ satisfies \Condone\ and let 
$\dvec'\in\mathcal{N}_6(\dvec)$.
Then $M(\dvec')\ge M(\dvec)-6$, and $\Delta(\dvec')\le \Delta(\dvec)+6$. 
Also $J(\dvec')\le J(\dvec)+6\Delta(\dvec)+6$, with equality when 
$\Delta(\dvec') = \Delta(\dvec)+6$ and the largest $\Delta(\dvec')$ entries of 
$\dvec$ all equal $\Delta(\dvec)$.
Applying Theorem~\ref{thm:main} to $\dvec'$ shows that $\dvec'$ is
(2,2)-stable, and therefore $\dvec$ is (8,8)-stable, 
by Lemma~\ref{lem:transitive}.  This establishes part~(a) of Theorem~\ref{thm:stable}.
\end{proof}

We conclude this section by proving Theorem~\ref{thm:main}. 

\subsection{ Proof of Theorem~\ref{thm:main}}\label{s:deferred}

The switching method is a way to bound or approximate the ratio of
the sizes of large, finite sets, introduced by McKay~\cite{mckay-line}.  
It has been used to obtain asymptotic enumeration formulae for
sparse graphs with given degrees, see~\cite{McKW91}, and also forms
the basis for fast exactly-uniform sampling algorithms, see~\cite{GW,McKW90}.
Our proof of Theorem~\ref{thm:main} uses three simple switching arguments.

\begin{proof}[Proof of Theorem~\ref{thm:main}]
For all $t\in [n]$
we write $\boldsymbol{e}_t$ to denote the vector with~1 in the $t$'th position and zeroes elsewhere.
 Let $\dvec'\in \N_2(\dvec)$ such that $\norm{\dvec-\dvec'}_1=2$. We will prove that $|\Omega(\dvec')|\le M(\dvec)^2|\, \Omega(\dvec)|$, which implies that $\dvec$ is $(2,2)$-stable, as claimed. 

We will consider three cases for $\dvec'$: 
\[ \dvec'=\dvec-\boldsymbol{e}_i-\boldsymbol{e}_j,\qquad
   \dvec'=\dvec+\boldsymbol{e}_i+\boldsymbol{e}_j, \qquad
   \dvec'=\dvec+\boldsymbol{e}_i-\boldsymbol{e}_j.
\]
Write $u\sim v$ to indicate that vertices $u$ and $v$ are adjacent.

\bigskip

\noindent {\em Case 1: $\dvec'=\dvec+\boldsymbol{e}_i+\boldsymbol{e}_j$.}\ 
There are two subcases: $i\neq j$ and $i=j$.  
\medskip

\noindent {\em Case 1a: $i\neq j$.} We will use the $(i^-,j^-)$-switching shown on the left of Figure~\ref{f:1a1b}.
This switching converts a graph $G'\in \Omega(\dvec')$ to a graph $G\in \Omega(\dvec)$. To perform the switching, choose an ordered set of vertices $(u_1,u_2,u_3,u_4)$ in $G'$ such that 
\begin{enumerate}
\item[(a)] the four vertices are distinct from $i,j$ and are pairwise distinct except that $u_1=u_4$ is permitted;
\item[(b)] $i\sim u_1$, $u_2\sim u_3$, $j\sim u_4$;
\item[(c)] $u_1$ is not adjacent with $u_2$ and $u_3$ is not adjacent with $u_4$.
\end{enumerate}
Then the switching deletes edges $iu_1$, $ju_4$ and $u_2u_3$, and add edges $u_1u_2$ and $u_3u_4$. The resulting graph $G$ has degree sequence $\dvec$.

\begin{figure}[ht!]
\begin{center}
\begin{tikzpicture}[scale=0.8]
\draw [-,very thick] (0,0) -- (0,2) (0,4) -- (2,4) (2,2) -- (2,0);
\draw [-,very thick,dashed] (0,2) -- (0,4) (2,2) -- (2,4);
\draw [fill] (0,0) circle (0.12); \draw [fill] (0,2) circle (0.12);
\draw [fill] (0,4) circle (0.12); \draw [fill] (2,0) circle (0.12);
\draw [fill] (2,2) circle (0.12); \draw [fill] (2,4) circle (0.12);
\node [left] at (-0.2,0) {$i$}; \node [right] at (2.2,0) {$j$};
\node [left] at (-0.1,2) {$u_1$}; \node [right] at (2.1,2) {$u_4$};
\node [left] at (-0.1,4) {$u_2$}; \node [right] at (2.1,4) {$u_3$};
\draw [->,line width = 1mm] (3.2,2) -- (4.0,2);
\begin{scope}[shift={(5.25,0)}]
\draw [-,very thick,dashed] (0,0) -- (0,2) (0,4) -- (2,4) (2,2) -- (2,0);
\draw [-,very thick] (0,2) -- (0,4) (2,2) -- (2,4);
\draw [fill] (0,0) circle (0.12); \draw [fill] (0,2) circle (0.12);
\draw [fill] (0,4) circle (0.12); \draw [fill] (2,0) circle (0.12);
\draw [fill] (2,2) circle (0.12); \draw [fill] (2,4) circle (0.12);
\node [left] at (-0.2,0) {$i$}; \node [right] at (2.2,0) {$j$};
\node [left] at (-0.1,2) {$u_1$}; \node [right] at (2.1,2) {$u_4$};
\node [left] at (-0.1,4) {$u_2$}; \node [right] at (2.1,4) {$u_3$};
\end{scope}
\draw [-] (9.25,-0.5) -- (9.25,4.5); 
\begin{scope}[shift={(11,0)}]
\draw [-,very thick] (2,2) -- (1,0) -- (0,2) (0,4) -- (2,4); 
\draw [-,very thick,dashed] (0,2) -- (0,4) (2,2) -- (2,4);
\draw [fill] (1,0) circle (0.12); \draw [fill] (0,2) circle (0.12);
\draw [fill] (0,4) circle (0.12);
\draw [fill] (2,2) circle (0.12); \draw [fill] (2,4) circle (0.12);
\node [left] at (0.8,0) {$i$}; 
\node [left] at (-0.1,2) {$u_1$}; \node [right] at (2.1,2) {$u_4$};
\node [left] at (-0.1,4) {$u_2$}; \node [right] at (2.1,4) {$u_3$};
\draw [->,line width = 1mm] (3.2,2) -- (4.0,2);
\begin{scope}[shift={(5.25,0)}]
\draw [-,very thick,dashed] (2,2) -- (1,0) -- (0,2) (0,4) -- (2,4); 
\draw [-,very thick] (0,2) -- (0,4) (2,2) -- (2,4);
\draw [fill] (1,0) circle (0.12); \draw [fill] (0,2) circle (0.12);
\draw [fill] (0,4) circle (0.12); 
\draw [fill] (2,2) circle (0.12); \draw [fill] (2,4) circle (0.12);
\node [left] at (0.8,0) {$i$}; 
\node [left] at (-0.1,2) {$u_1$}; \node [right] at (2.1,2) {$u_4$};
\node [left] at (-0.1,4) {$u_2$}; \node [right] at (2.1,4) {$u_3$};
\end{scope}
\end{scope}
\end{tikzpicture}
\caption{The $(i^-,j^-)$-switching.  
Left: Case 1a with $i\neq j$.\,\,\,  
Right: Case 1b  with $i=j$.}
\label{f:1a1b}
\end{center}
\end{figure}
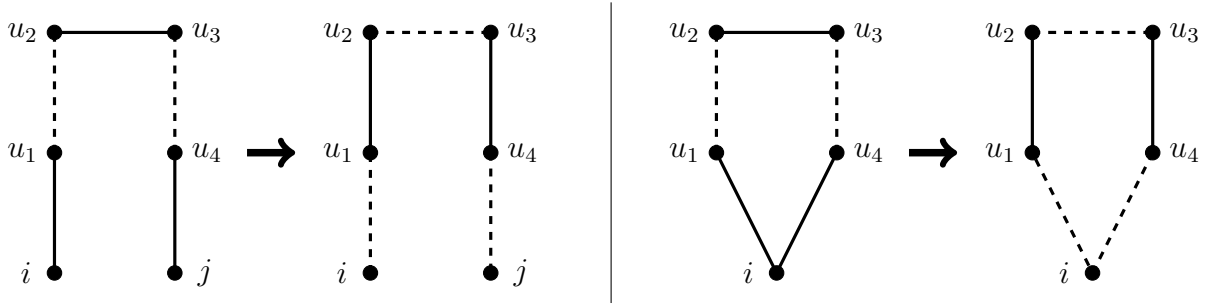

Let $f_{(i^-,j^-)}(G')$ denote the number of ways to perform an $(i^-,j^-)$-switching to $G'$. As $G'\in \Omega(\dvec')$, there are $d_i+1$ ways to choose $u_1$, $d_j+1$ ways to choose $u_4$, and at most $M(\dvec')=M(\dvec)+2$ ways to choose $(u_2,u_3)$. Hence $f_{(i^-,j^-)}(G)\le (d_i+1)(d_j+1)(M(\dvec)+2)$. Among all such choices, if condition (a) above fails, we say there is a vertex collision. To obtain a lower bound,
we use inclusion-exclusion and need to exclude the choices where $\{u_2,u_3\}\cap \{i,j,u_1,u_4\}\neq \emptyset$, and the choices where $u_1\sim u_2$ or $u_3\sim u_4$.  Given $u_1,u_4$, the number of choices for  $(u_2,u_3)$ such that $\{u_2,u_3\}\cap \{i,j,u_1,u_4\}\neq \emptyset$ is at most $8\Delta(\dvec)+4$, noting that $d'_i,d'_j\le \Delta(\dvec)+1$.  Next, we bound the number of choices for  $(u_2,u_3)$ where there is no vertex collision but  $u_1\sim u_2$ or $u_3\sim u_4$. 
Since $\dvec'$ agrees with $\dvec$ in all components except for $i$ and $j$, 
adapting (\ref{2paths}) shows that the number of 2-paths in $G'$ 
which start at $u_1$ and avoid both $i$ and $j$ is at most 
\[
\sum_{\ell=1}^{d_{u_1}-1} (d_{\pi(\ell)}-1) \le \sum_{\ell=1}^{\Delta(\dvec)-1} (d_{\pi(\ell)}-1) \le J(\dvec)-\Delta(\dvec).
\]
Hence, given $u_1$ and $u_4$, the number of choices for  $(u_2,u_3)$ where there is no vertex collision but $u_1\sim u_2$ (or similarly $u_3\sim u_4$) is at most $J(\dvec)-\Delta(\dvec)$. It follows that, for every $G'\in \Omega(\dvec')$,
\begin{align*}
 f_{(i^-,j^-)}(G')&\ge (d_i+1)(d_j+1)(M(\dvec)+2-8\Delta(\dvec)-4- 2(J(\dvec)-\Delta(\dvec))\\
 &=(d_i+2)(d_i+1)(M(\dvec)- 2J(\dvec)-6\Delta(\dvec)-2 ),
\end{align*}
which is positive by the assumption of the theorem.

Next, given $G\in\Omega(\dvec)$, we estimate $b_{(i^-,j^-)}(G)$,  the number of ways to create $G$ by performing an $(i^-,j^-)$-switching to a graph in $\Omega(\dvec')$. To estimate $b_{(i^-,j^-)}(G)$ we count the number of inverse $(i^-,j^-)$-switchings that can be performed to $G$. An inverse $(i^-,j^-)$-switching chooses vertices $(u_1,u_2,u_3,u_4)$ such that
\begin{enumerate}
\item[(a)] the four vertices are distinct from $i,j$ and are pairwise distinct except that $u_1=u_4$ is permitted;
\item[(b)] $iu_1$, $u_2u_3$ and $ju_4$ are not edges;
\item[(c)] $u_1\sim u_2$ and $u_3\sim u_4$.
\end{enumerate}
Trivially we have $b_{(i^-,j^-)}(G')\le M(\dvec)^2$.
Hence
\[
\frac{|\Omega(\dvec')|}{|\Omega(\dvec)|} \le \frac{\max_{G\in \Omega(\dvec)} b_{(i^-,j^-)}(G)}{\min_{G'\in \Omega(\dvec')}f_{(i^-,j^-)}(G')} \le \frac{M(\dvec)^2}{(d_i+1)(d_j+1)(M(\dvec)- 2J(\dvec)-6\Delta(\dvec) -2)}\le M(\dvec)^2.
\]

\medskip

\noindent {\em Case 1b: $i=j$.} We use the $(2i^-)$-switching, 
which is defined the same as the $(i^-,j^-)$-switching except that $i=j$,
and now we require that $u_1\neq u_4$.  
This switching is shown on the right of Figure~\ref{f:1a1b}.
Trivially, we have $f_{2i^-}(G')\le (d_i+2)(d_i+1)(M(\dvec)+2)$. Given $u_1$ and $u_4$, the number of choices for $(u_2,u_3)$ such that  $\{u_2,u_3\}\cap \{i,u_1,u_4\}\neq \emptyset$ is at most $2\cdot(3\Delta(\dvec)+2)=6\Delta(\dvec)+4$, noting that $d'_i\le \Delta(\dvec)+2$. The number of choices where there is no vertex collision but $u_1\sim u_2$ or $u_3\sim u_4$ is at most $2(J(\dvec)-\Delta(\dvec))$, as shown in Case~1a. 
(Again, this uses the fact that $\dvec'$ agrees with $\dvec$
everywhere except $i$ and $j$.)
Therefore
\begin{align*}
f_{2i^-}(G')&\ge (d_i+2)(d_i+1)(M(\dvec)+2-(6\Delta(\dvec)+4)- 2(J(\dvec)-\Delta(\dvec))) \\
&=(d_i+2)(d_i+1)(M(\dvec)- 2J(\dvec)-4\Delta(\dvec)-2),
\end{align*}
which is positive by the theorem assumption.
We also have the trivial upper bound $b_{2i^-}(G)\le M(\dvec)^2$.
It follows that
\[
\frac{|\Omega(\dvec')|}{|\Omega(\dvec)|} \le \frac{\max_{G\in \Omega(\dvec)} b_{(i^-,j^-)}(G)}{\min_{G'\in \Omega(\dvec')}f_{(i^-,j^-)}(G')} \le \frac{M(\dvec)^2}{(d_i+2)(d_i+1)(M(\dvec)- 2J(\dvec)-4\Delta(\dvec)-2 )}\le M(\dvec)^2.
\]

\bigskip

\noindent {\em Case 2: $\dvec'=\dvec-\boldsymbol{e}_i-\boldsymbol{e}_j$.}\, 
Again there are two subcases:\ $i\neq j$ and $i=j$.\medskip

\noindent {\em Case 2a: $i\neq j$.}
We will use the $(i^+,j^+)$-switching. 
This switching chooses an ordered set of vertices $(u_1,u_2)$ in $G'\in \Omega(\dvec')$ such that 
\begin{enumerate}
\item[(a)] the four vertices $u_1,u_2,i,j$ are pairwise distinct;
\item[(b)] $u_1\sim u_2$;
\item[(c)] $u_1$ is not adjacent with $i$ and $u_2$ is not adjacent with $j$.
\end{enumerate}
The switching deletes the edge $u_1u_2$ and adds edges $iu_1$ and $ju_2$. The result is a graph $G\in\Omega(\dvec)$.
See the left hand side of Figure~\ref{f:2a2b}.

\begin{figure}[ht!]
\begin{center}
\begin{tikzpicture}[scale=0.8]
\draw [-,very thick,dashed] (0,0) -- (0,2) (2,2) -- (2,0);
\draw [-,very thick] (0,2) -- (2,2);
\draw [fill] (0,0) circle (0.12); \draw [fill] (0,2) circle (0.12);
\draw [fill] (2,0) circle (0.12); \draw [fill] (2,2) circle (0.12);
\node [left] at (-0.2,0) {$i$}; \node [right] at (2.2,0) {$j$};
\node [left] at (-0.1,2) {$u_1$}; \node [right] at (2.1,2) {$u_2$};
\draw [->,line width = 1mm] (3.2,1) -- (4.0,1);
\begin{scope}[shift={(5.25,0)}]
\draw [-,very thick] (0,0) -- (0,2) (2,2) -- (2,0);
\draw [-,very thick,dashed] (0,2) -- (2,2);
\draw [fill] (0,0) circle (0.12); \draw [fill] (0,2) circle (0.12);
\draw [fill] (2,0) circle (0.12); \draw [fill] (2,2) circle (0.12);
\node [left] at (-0.2,0) {$i$}; \node [right] at (2.2,0) {$j$};
\node [left] at (-0.1,2) {$u_1$}; \node [right] at (2.1,2) {$u_2$};
\end{scope}
\draw [-] (9.25,-0.5) -- (9.25,2.5); 
\begin{scope}[shift={(11,0)}]
\draw [-,very thick,dashed] (2,2) -- (1,0) -- (0,2);
\draw [-,very thick] (0,2) -- (2,2);
\draw [fill] (1,0) circle (0.12); \draw [fill] (0,2) circle (0.12);
\draw [fill] (2,2) circle (0.12); 
\node [left] at (0.8,0) {$i$}; 
\node [left] at (-0.1,2) {$u_1$}; \node [right] at (2.1,2) {$u_2$};
\draw [->,line width = 1mm] (3.2,1) -- (4.0,1);
\begin{scope}[shift={(5.25,0)}]
\draw [-,very thick] (2,2) -- (1,0) -- (0,2);
\draw [-,very thick,dashed] (0,2) -- (2,2);
\draw [fill] (1,0) circle (0.12); \draw [fill] (0,2) circle (0.12);
\draw [fill] (2,2) circle (0.12); 
\node [left] at (0.8,0) {$i$}; 
\node [left] at (-0.1,2) {$u_1$}; \node [right] at (2.1,2) {$u_2$};
\end{scope}
\end{scope}
\end{tikzpicture}
\caption{The $(i^+,j^+)$-switching.  Left: Case 2a with $i\neq j$.\,\,\,  Right, Case 2b with $i=j$.}
\label{f:2a2b}
\end{center}
\end{figure}
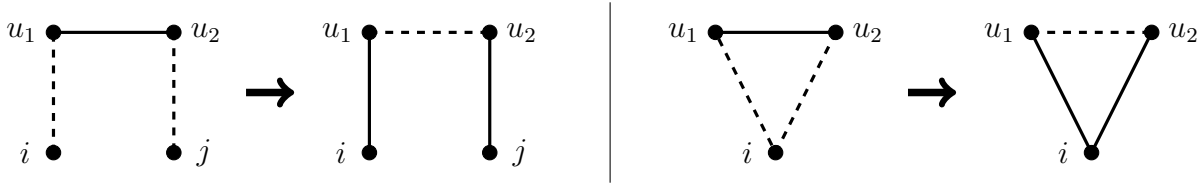

Obviously, $f_{(i^+,j^+)}(G)\le M(\dvec')=M(\dvec)-2$. To apply inclusion-exclusion, we need to subtract the number of choices where (a) or (c) is violated. 
The number of choices where a vertex collision occurs (that is, where (a) is violated) is at most $4\Delta(\dvec')\le 4\Delta(\dvec)$. 
The number of choices without vertex collision but where $i\sim u_1$ (or $j\sim u_2$) 
is at most $J(\dvec)-\Delta(\dvec)$, as shown in Case~1a.
Hence,
\[
f_{(i^+,j^+)}(G')\ge M(\dvec)-2-4\Delta(\dvec)-2(J(\dvec)-\Delta(\dvec)) =M(\dvec)-2J(\dvec)-2\Delta(\dvec)-2,
\]
which is positive by the assumption of the theorem. We also have the trivial upper bound 
\[
b_{(i^+,j^+)}(G)\le d_i d_j.
\]
Hence,
\[
\frac{|\Omega(\dvec')|}{|\Omega(\dvec)|} \le \frac{\max_{G\in \Omega(\dvec)} b_{(i^+,j^+)}(G)}{\min_{G'\in \Omega(\dvec')}f_{(i^+,j^+)}(G')} \le \frac{d_id_j }{M(\dvec)-2J(\dvec)-2\Delta(\dvec)-2}\le M(\dvec)^2.
\]

\medskip

\noindent {\em Case 2b: $i=j$.}
We will use the $2i^+$-switching, which is the same as the $(i^+,j^+)$-switching but with $i=j$.  See the right hand side of Figure~\ref{f:2a2b}.
Arguing as in Case 2a gives
\[
f_{2i^+}(G')\ge M(\dvec)-2-2\Delta(\dvec)-2(J(\dvec)-\Delta(\dvec)) =M(\dvec)-2J(\dvec)-2.
\]
Together with the trivial upper bound $b_{2i^+}(G)\le d_i (d_i-1)$
we have
\[
\frac{|\Omega(\dvec')|}{|\Omega(\dvec)|} \le \frac{\max_{G\in \Omega(\dvec)} b_{2i^+}(G)}{\min_{G'\in \Omega(\dvec')}f_{2i^+}(G')} \le \frac{d_i(d_i-1) }{M(\dvec)-2J(\dvec)+2}\le M(\dvec)^2.
\]

\bigskip

\noindent {\em Case 3: $\dvec'=\dvec+\boldsymbol{e}_i-\boldsymbol{e}_j$ and $i\neq j$.}
In this case we will use the $(i^-,j^+)$-switching shown in Figure~\ref{f:3}. 
The switching chooses vertices $(u_1,u_2,u_3)$ in $G'\in\Omega(\dvec')$ such that
\begin{enumerate}
\item[(a)] the five vertices $i,j,u_1,u_2,u_3$ are all distinct;
\item[(b)] $i\sim u_1$ and $u_2\sim u_3$;
\item[(c)] $u_1$ is not adjacent to $u_2$ and $u_3$ is not adjacent to $j$.
\end{enumerate}
The switching then replaces edges $iu_1$ and $u_2u_3$ by edges $u_1u_2$ and $u_3j$. The resulting graph $G$ belongs to $\Omega(\dvec)$.

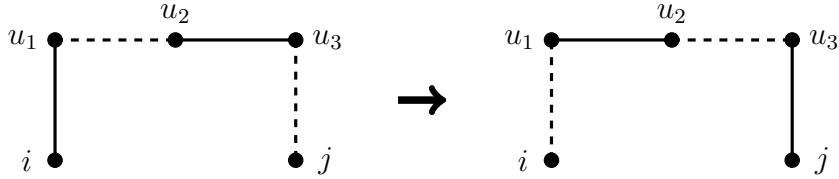
\begin{figure}[ht!]
\begin{center}
\begin{tikzpicture}[scale=0.8]
\draw [-,very thick,dashed] (0,2) -- (2,2) (4,2) -- (4,0);
\draw [-,very thick] (0,0) -- (0,2) (2,2) -- (4,2);
\draw [fill] (0,0) circle (0.12); \draw [fill] (0,2) circle (0.12);
\draw [fill] (2,2) circle (0.12); \draw [fill] (4,2) circle (0.12);
\draw [fill] (4,0) circle (0.12);
\node [left] at (-0.2,0) {$i$}; \node [right] at (4.2,0) {$j$};
\node [left] at (-0.1,2) {$u_1$}; \node [right] at (4.1,2) {$u_3$};
\node [above] at (2.0,2.1) {$u_2$};
\draw [->,line width = 1mm] (5.7,1) -- (6.5,1);
\begin{scope}[shift={(8.25,0)}]
\draw [-,very thick] (0,2) -- (2,2) (4,2) -- (4,0);
\draw [-,very thick,dashed] (0,0) -- (0,2) (2,2) -- (4,2);
\draw [fill] (0,0) circle (0.12); \draw [fill] (0,2) circle (0.12);
\draw [fill] (2,2) circle (0.12); \draw [fill] (4,2) circle (0.12);
\draw [fill] (4,0) circle (0.12);
\node [left] at (-0.2,0) {$i$}; \node [right] at (4.2,0) {$j$};
\node [left] at (-0.1,2) {$u_1$}; \node [right] at (4.1,2) {$u_3$};
\node [above] at (2.0,2.1) {$u_2$};
\end{scope}
\end{tikzpicture}
\caption{The $(i^-,j^+)$-switching used in Case 3.} 
\label{f:3}
\end{center}
\end{figure}

Trivially, we have $f_{(i^-,j^+)}(G')\le (d_i+1)M(\dvec)$, noticing that $d'_i=d_i+1$ and $M(\dvec')=M(\dvec)$. Given $u_1$, the number of choices of $(u_2,u_3)$ where a vertex collision occurs is at most $6\Delta(\dvec)$, noticing that $d'_j\le \Delta(\dvec)-1$. The number of choices where no vertex collision occurs but $u_1\sim u_2$ (or $u_3\sim j$) is at most $J(\dvec)-\Delta(\dvec)$. Hence, 
\[
f_{(i^-,j^+)}(G')\ge (d_i+1)(M(\dvec)-6\Delta(\dvec)-2(J(\dvec)-\Delta(\dvec)))=(d_i+1)(M(\dvec)-2J(\dvec)-4\Delta(\dvec)),
\]
which is positive by the theorem assumption.
Consequently, as $
b_{(i^-,j^+)}(G)\le d_jM(\dvec)$, 
we have
\[
\frac{|\Omega(\dvec')|}{|\Omega(\dvec)|} \le \frac{\max_{G\in \Omega(\dvec)} b_{(i^-,j^+)}(G)}{\min_{G'\in \Omega(\dvec')}f_{(i^-,j^+)}(G')} \le \frac{d_j M(\dvec)}{(d_i+1)(M(\dvec)-2J(\dvec)-4\Delta(\dvec))}<M(\dvec)^2.
\]
Combining all cases together completes the proof of the theorem.
\end{proof}

\noindent{\bf Remark.}\
Even though Case~2 is the reverse of Case~1, we did not use the reverse switching of Case~1 for Case 2. 
This is because the switching in Case~2 switches fewer edges than the reverse of the switching in Case~1, 
and as a result, it yields a better lower bound on $f_{(i^+,j^+)}(G')$. 
The reader may wonder why we did not use the reverse switching in Case~2 for Case~1. This is because in the 
reverse switching in Case~2, we have to restrict to $u_1\neq u_2$, and this restriction would result in a larger error and a useless lower bound on $f_{(i^-,j^-)}(G')$.

\section{Applications}
\label{sec:applications}

Theorem~\ref{thm:main} is tailored for treating heavy-tailed degree sequences,
and we discuss several such families below.  First we remark that as a straightforward
corollary of Theorem~\ref{thm:mixing2} we deduce that the switch chain
mixes in polynomial time for any $d$-regular degree sequence with
$d^*<n/2-9-28/d^*$, where $d^*=\min\{d,n-1-d\}$.   
(The parameter $d^*$ arises by complementation.)
The bound obtained on the mixing time is given by (\ref{mixingbound}).
However, rapid mixing was established for \emph{all} regular degree sequences by 
Cooper, Dyer and Greenhill~\cite{CDG}, and the upper bound on the mixing time given
in~\cite{CDG} is smaller than that obtained from (\ref{mixingbound}).

In~\cite{GW-power}, the first author and Wormald studied asymptotic enumeration of $\Omega(\dvec)$ for heavy-tailed degree sequences. A few particular families of heavy-tailed degree sequences were discussed in~\cite{GW-power}, including power-law sequences. In this section, we examine the mixing time of the switch chain on $\Omega(\dvec)$ for these families. 

A degree sequence $\dvec$ is said to be {\em nontrivial} if every component of $\dvec$ is positive. For sampling over $\Omega(\dvec)$, it is sufficient to consider only nontrivial $\dvec$.
Define 
\[ M_2(\dvec)=\sum_{i=1}^n d_i(d_i-1).\]
 We drop $\dvec$ from the notations $M(\dvec)$, $M_2(\dvec)$, $J(\dvec)$ and $\Delta(\dvec)$ when there is no confusion.  In all theorems of this section, we say a family of degree sequences 
is \emph{stable} if it is both 8-stable and strongly stable, and thus is also P-stable.

\subsection{Degree sequences with an upper bound on $M_2(\dvec)$}

The first example in~\cite{GW-power}  are degree sequences where $M_2$ does not grow too fast with $M$. An asymptotic enumeration result was given in~\cite{GW-power} when $M_2=o(M^{9/8})$. Under a much weaker condition we show a polynomial-time mixing bound for the switch chain on $\Omega(\dvec)$.

\begin{theorem}\label{M2-not-too-big}
Suppose that the graphical degree sequence $\dvec$ satisfies 
\begin{equation}
\label{condition1} 
M(\dvec)>2\sqrt{\Delta(\dvec)\big(M(\dvec)+M_2(\dvec)\big)}+18\Delta(\dvec)+56.
\end{equation}
Then 
the switch chain on $\Omega(\dvec)$
mixes in polynomial time, 
with mixing time bounded by~\emph{(\ref{mixingbound})}.  
Furthermore, the family of all degree sequences $\dvec$
which satisfy \emph{(\ref{condition1})} is stable.
\end{theorem}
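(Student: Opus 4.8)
The plan is to deduce Theorem~\ref{M2-not-too-big} from Theorem~\ref{thm:stable} by showing that condition~(\ref{condition1}) implies \Condone, that is, $M(\dvec) > 2J(\dvec) + 18\Delta(\dvec) + 56$. The only work is therefore an upper bound on $J(\dvec)$ in terms of $\Delta$, $M$ and $M_2$. Recall $J(\dvec) = \sum_{i=1}^{\Delta} d_{\mu(i)}$ is the sum of the $\Delta$ largest degrees. First I would bound $J(\dvec)$ using Cauchy--Schwarz:
\[
J(\dvec) = \sum_{i=1}^{\Delta} d_{\mu(i)} \le \sqrt{\Delta}\,\Big(\sum_{i=1}^{\Delta} d_{\mu(i)}^2\Big)^{1/2} \le \sqrt{\Delta}\,\Big(\sum_{i=1}^{n} d_i^2\Big)^{1/2} = \sqrt{\Delta\,(M_2(\dvec) + M(\dvec))},
\]
using $\sum_i d_i^2 = M_2(\dvec) + M(\dvec)$. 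Substituting this into~(\ref{condition1}) gives $M(\dvec) > 2\sqrt{\Delta(M+M_2)} + 18\Delta + 56 \ge 2J(\dvec) + 18\Delta(\dvec) + 56$, which is exactly \Condone. Then Theorem~\ref{thm:stable}(a) gives that $\dvec$ is $(8,8)$-stable, and Theorem~\ref{thm:mixing2} (or Theorem~\ref{thm:mixing} with $\alpha = 8$) yields the mixing time bound~(\ref{mixingbound}).

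For the ``furthermore'' claim I would verify that every $\dvec$ satisfying~(\ref{condition1}) satisfies \Condtwo, so that Theorem~\ref{thm:stable}(b) applies to the whole family at once, giving both P-stability and strong stability. This is immediate from the same Cauchy--Schwarz bound, since~(\ref{condition1}) implies
\[
M(\dvec) > 2\sqrt{\Delta(M+M_2)} + 18\Delta + 56 \ge 2J(\dvec) + 6\Delta(\dvec) + 2,
\]
and \Condtwo\ holds with room to spare. Combined with the 8-stability just established, this shows the family is stable in the sense defined at the start of Section~\ref{sec:applications}.

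I do not anticipate a serious obstacle here; the content of the theorem is essentially the observation that the awkward quantity $J(\dvec)$ in \Condone\ and \Condtwo\ can be controlled by the more natural moment $M_2(\dvec)$ via Cauchy--Schwarz, at the cost of a factor involving $\sqrt{\Delta}$. The only point requiring a little care is the identity $\sum_i d_i^2 = M_2(\dvec) + M(\dvec)$ and making sure the bound $\sum_{i=1}^{\Delta} d_{\mu(i)}^2 \le \sum_{i=1}^n d_i^2$ is applied correctly (it is, since all terms are nonnegative and $\Delta \le n$). One might also note that the bound is genuinely weaker than the asymptotic enumeration hypothesis $M_2 = o(M^{9/8})$ from~\cite{GW-power}: here $M_2$ is allowed to be as large as roughly $M^2/(4\Delta)$, which for bounded $\Delta$ is of order $M^2$.
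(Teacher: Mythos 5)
Your proposal is correct and follows essentially the same route as the paper: the Cauchy--Schwarz bound $J(\dvec)\le\sqrt{\Delta(\dvec)\,(M(\dvec)+M_2(\dvec))}$ (using $\sum_i d_i^2 = M(\dvec)+M_2(\dvec)$) reduces~(\ref{condition1}) to \Condone, after which Theorems~\ref{thm:stable} and~\ref{thm:mixing2} give the mixing time bound and stability. Your explicit observation that \Condone\ implies \Condtwo\ (so that part (b) of Theorem~\ref{thm:stable} also applies, yielding strong stability and hence ``stable'' in the sense of Section~\ref{sec:applications}) is exactly the step the paper leaves implicit in ``the assertions follow from Theorems~\ref{thm:stable} and~\ref{thm:mixing2}.''
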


\begin{proof} It is sufficient to verify {\tt Condition 1} of Theorem~\ref{thm:stable}.
Observe that
\[
M+M_2=\sum_{i=1}^n d_i^2 \ge \sum_{i=1}^{\Delta} d_i^2.
\]
By the Cauchy-Schwartz inequality, 
\[
\Delta\left(\sum_{i=1}^{\Delta} d_i^2\right) \ge \left(\sum_{i=1}^{\Delta} d_i\right)^2=J^2.
\]
Hence $J\le \sqrt{\Delta(M+M_2)}$. 
It follows that \Condone\ holds.
The assertions of the theorem follow from 
Theorems~\ref{thm:stable} and~\ref{thm:mixing2}.
\end{proof}

Since $\Delta \leq \sqrt{ 2 M_2}$, it follows from Theorem~\ref{M2-not-too-big} 
that the switch chain mixes in polynomial time whenever $M_2\le c M^{4/3}$ for some 
sufficiently small constant $c>0$.

\subsection{Power-law density-bounded sequences}\label{ss:power-density}

Two types of power-law degree sequences were considered in~\cite{GW-power}. We say $\dvec$ is a {\em power-law density-bounded}  sequence with parameter $\gamma$ if there exists $C>0$ such that the number of components in $\dvec$ having value $i$ is at most $Ci^{-\gamma}n$ for all $i\ge 1$ and for all $n$. This version of power-law degree sequences have been considered by many people in the literature, see for example~\cite{ChungLu}. 
An asymptotic enumeration result was given in~\cite{GW-power} for such $\dvec$ when $\gamma>5/2$. In the following theorem, we show that the switch chain mixes in polynomial time on $\Omega(\dvec)$ for such $\dvec$ if $\gamma>2$.

\begin{theorem}
Let $\dvec$ be a nontrivial power-law density-bounded degree sequence with parameter $\gamma>2$.
Then the switch chain on $\Omega(\dvec)$
mixes in polynomial time, 
with mixing time bounded by~\emph{(\ref{mixingbound})}.  
Furthermore, the family of all nontrivial power-law density-bounded degree sequences $\dvec$
with parameter $\gamma > 2$ is stable.
\label{thm:density-bounded}
\end{theorem}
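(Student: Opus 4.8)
The plan is to verify \Condone\ from Theorem~\ref{thm:stable}(a), after which Theorems~\ref{thm:stable} and~\ref{thm:mixing2} immediately give both the stability claim and the mixing-time bound. So the entire task reduces to showing that every nontrivial power-law density-bounded degree sequence $\dvec$ with parameter $\gamma>2$ satisfies $M(\dvec) > 2J(\dvec) + 18\Delta(\dvec) + 56$. In fact, since $\gamma>2$ means the tail decays fast enough that both the number of edges and the extreme-degree quantities are controlled, I expect $J$ and $\Delta$ to be of lower order than $M$, so the inequality should hold for all sufficiently large $n$; one then handles the finitely many small $n$ separately (or absorbs them into the constant, noting that \Condone\ can only fail for bounded $n$, where one can check graphicality directly or simply observe that the family in question is finite for each fixed $n$ and P-stability/strong-stability are vacuous or trivial there).

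The key estimates I would carry out, in order: First, bound $\Delta = \Delta(\dvec)$. Since the number of vertices of degree $i$ is at most $Ci^{-\gamma}n$, and there must be at least one vertex of degree $\Delta$, we get $C\Delta^{-\gamma} n \ge 1$, i.e.\ $\Delta \le (Cn)^{1/\gamma}$. So $\Delta = O(n^{1/\gamma})$, which is $o(n^{1/2})$ since $\gamma>2$. Second, bound $M = M(\dvec) = \sum_i d_i$. Because the sequence is nontrivial, every component is at least $1$, so $M \ge n$; this lower bound of order $n$ is the workhorse. (One also has $M \le \sum_{i\ge 1} i \cdot Ci^{-\gamma} n = Cn\sum_{i\ge 1} i^{1-\gamma}$, which converges only when $\gamma>2$ — this is exactly where the hypothesis $\gamma>2$ is used — giving $M = \Theta(n)$, but for \Condone\ I only need the lower bound $M\ge n$.) Third, bound $J = J(\dvec) = \sum_{i=1}^{\Delta} d_{\mu(i)}$, the sum of the $\Delta$ largest degrees. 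Each term is at most $\Delta$, so crudely $J \le \Delta^2 = O(n^{2/\gamma})$. A sharper bound counts, for each value $v$, how many of the top $\Delta$ slots have degree $\ge v$: this is at most $\sum_{w\ge v} Cw^{-\gamma}n$, leading to $J \le C'n^{2/\gamma}$ for an explicit $C'$ depending on $C,\gamma$; either bound suffices.

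Putting these together: $2J + 18\Delta + 56 = O(n^{2/\gamma}) + O(n^{1/\gamma}) + O(1) = O(n^{2/\gamma}) = o(n) \le o(M)$, since $2/\gamma < 1$. Hence $M > 2J + 18\Delta + 56$ for all $n \ge n_0$, where $n_0 = n_0(C,\gamma)$ is an explicit threshold coming from comparing the constants in the $O(\cdot)$ bounds. For $n < n_0$ there are only finitely many degree sequences in the family, and we may either check them individually or note that \Condone\ is not needed for them: P-stability and strong stability of a family are conditions on asymptotic behaviour, so adding finitely many sequences does not affect whether the family is stable, and the mixing-time bound~(\ref{mixingbound}) for each individual small-$n$ sequence either follows from \Condone\ (if it happens to hold) or is trivial because $\Omega(\dvec)$ has bounded size. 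The main obstacle — really the only non-routine point — is making the bound on $J$ explicit enough and choosing the constant in the statement of stability so that the small-$n$ cases are handled cleanly; the asymptotic inequality itself is forced by $2/\gamma<1$ and is not delicate.
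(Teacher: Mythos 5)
Your proposal is correct and follows essentially the same route as the paper: verify \Condone\ via $\Delta=O(n^{1/\gamma})$, $J=O(n^{2/\gamma})$, $M\ge n$, and $2/\gamma<1$, then invoke Theorems~\ref{thm:stable} and~\ref{thm:mixing2}. The only differences are cosmetic — you bound $J\le\Delta^2$ directly (simpler than, and of the same order as, the paper's tail-sum estimate $J\le\sum_{i\ge j}Ci^{1-\gamma}n$ with $j=cn^{1/\gamma}$), and you spell out the finitely-many-small-$n$ caveat that the paper leaves implicit in its asymptotic phrasing.
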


\begin{proof} 
By definition, the maximum degree satisfies 
$\Delta\leq (Cn)^{1/\gamma}$. 
Let $j=cn^{1/\gamma}$ where $c>0$ is a sufficiently small constant.
Then $\sum_{i\ge j} n i^{-\gamma}\ge \Delta$. 
That is, there are at least $\Delta$ components of $\dvec$ which are bounded
below by $j$.
Hence
\[
J\le \sum_{i\ge j} C i^{1-\gamma} n = O(n^{2/\gamma}).
\]
Now $M=\Theta(n)$ as $\dvec$ is nontrivial.  Then $J+\Delta = o(M)$ as $\gamma > 2$. 
This implies that \Condone\ holds, and the statements of the theorem follow by 
Theorems~\ref{thm:stable} and~\ref{thm:mixing2}.
\end{proof}

When $\gamma>5/2$, a polynomial bound on the mixing time of the switch chain was given 
in~\cite{GS}.  To the best of our knowledge, the rest of Theorem~\ref{thm:density-bounded} is
new.

\subsection{Power-law distribution-bounded sequences}\label{ss:power-distribution}
\smallskip

Let $F_{\gamma}(i)=\sum_{j\ge i} j^{-\gamma} = O(i^{1-\gamma})$. We say that
$\dvec$ is a  {\em power-law distribution-bounded} sequence with parameter $\gamma$ if there exists $C>0$ such that the number of components of $\dvec$ taking value at least $i$ is at most $CF_{\gamma}(i)\, n$ for all $i$ and $n$. 
Power-law distribution-bounded sequences behave similarly to power-law density-bounded degree sequences, but allow longer tails (higher maximum degree). The maximum component of a power-law density-bounded sequence with parameter $\gamma$ is of order $n^{1/\gamma}$, whereas the maximum component of a power-law distribution-bounded sequence with parameter $\gamma$ is of order $n^{1/(\gamma-1)}$. Power-law distribution-bounded 
sequences, introduced in~\cite{GW-power}, are more realistic models for degree sequences of many real world networks.
In particular, this includes degree sequences composed of $n$ i.i.d.\ copies of power-law variables,
which were considered in~\cite{newman,vdH}. 

The following theorem bounds the mixing time of the switch chain on $\Omega(\dvec)$ where $\dvec$ is power-law distribution-bounded with parameter $\gamma>2$.

\begin{theorem}
Let $\dvec$ be a nontrivial power-law distribution-bounded degree sequence with parameter 
$\gamma>2$.
Then the switch chain on $\Omega(\dvec)$
mixes in polynomial time, 
with mixing time bounded by~\emph{(\ref{mixingbound})}.  
Furthermore, the family of all nontrivial power-law distribution-bounded degree sequences $\dvec$
with parameter $\gamma > 2$ stable.
\end{theorem}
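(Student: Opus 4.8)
The plan is to verify that any nontrivial power-law distribution-bounded degree sequence with parameter $\gamma > 2$ satisfies \Condone, after which the statements follow immediately from Theorems~\ref{thm:stable} and~\ref{thm:mixing2}, exactly as in the proofs of the two preceding theorems. So the whole argument reduces to showing $M > 2J + 18\Delta + 56$, and since $M = \Theta(n)$ for nontrivial $\dvec$, it is enough to prove $J = o(n)$ and $\Delta = o(n)$ (in fact $J + \Delta = o(M)$).

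First I would bound the maximum degree. By the defining property with $i = \Delta$, the number of components of value at least $\Delta$ is at least $1$ and at most $C F_\gamma(\Delta) n = O(\Delta^{1-\gamma} n)$, which forces $\Delta^{\gamma - 1} = O(n)$, i.e.\ $\Delta = O(n^{1/(\gamma-1)})$. Since $\gamma > 2$ gives $1/(\gamma-1) < 1$, we get $\Delta = o(n)$.

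Next I would bound $J$, the sum of the $\Delta$ largest degrees. Pick a threshold $j = c\, n^{1/(\gamma-1)}$ with $c > 0$ a suitably small constant; then $C F_\gamma(j) n = \Theta(n \cdot j^{1-\gamma}) = \Theta(n^{-1/(\gamma-1) + 1}\cdot c^{1-\gamma})$... wait, more carefully: $n j^{1-\gamma} = n (c n^{1/(\gamma-1)})^{1-\gamma} = n \cdot c^{1-\gamma} n^{-1} = c^{1-\gamma}$, a constant, so this choice is too crude — I would instead choose $j$ so that the count of components of value $\geq j$ is at least $\Delta$. Since the number of components of value $\geq j$ is roughly $\Theta(n j^{1-\gamma})$ for the extremal case, and we need this to be $\geq \Delta = O(n^{1/(\gamma-1)})$, solving $n j^{1-\gamma} \gtrsim n^{1/(\gamma-1)}$ gives $j^{\gamma-1} \lesssim n^{1 - 1/(\gamma-1)} = n^{(\gamma-2)/(\gamma-1)}$, i.e.\ $j = O(n^{(\gamma-2)/(\gamma-1)^2})$, which is $o(\Delta)$ and in particular $o(n)$. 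Then the $\Delta$ largest degrees are each at most $\Delta$, but more efficiently I would split: the top $\Delta$ degrees all lie at or above this $j$, and by Abel summation / integrating the tail bound,
\[
J = \sum_{i=1}^{\Delta} d_{\mu(i)} \;\le\; \sum_{i \ge j}\bigl(\text{number of components}\ge i\bigr) + (\text{boundary terms}) \;\le\; \sum_{i \ge j} C F_\gamma(i)\, n \;=\; O\!\Bigl(n \sum_{i\ge j} i^{1-\gamma}\Bigr) \;=\; O\bigl(n\, j^{2-\gamma}\bigr),
\]
which (using $\gamma > 2$ so the exponent $2-\gamma$ is negative, and $j \to \infty$ with $n$) is $o(n)$; one checks the exponent works out to a positive power of $n$ strictly less than $1$. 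Combined with $M = \Theta(n)$, this yields $2J + 18\Delta + 56 = o(M) < M$ for all large $n$, so \Condone\ holds; for the finitely many small $n$ not covered, one absorbs them into the constant or notes the family is finite there. I would also remark, as the paper does around~\cite{JMS}, that this case is exactly the one that the earlier refined P-stability criterion fails to handle when the average degree exceeds twice the minimum degree, so the novelty is in covering $\gamma > 2$ with minimum degree $1$.

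The main obstacle is purely bookkeeping in the second step: one must choose the threshold $j$ correctly as a function of $n$ and $\Delta$, and then carry out the tail summation $\sum_{i \ge j} i^{1-\gamma}$ cleanly (comparison with $\int_j^\infty x^{1-\gamma}\,dx = j^{2-\gamma}/(\gamma-2)$), keeping track of the constant $C$ and the $\gamma$-dependent constants so that the final inequality $J + \Delta = o(M)$ is genuinely uniform in $n$. There is no conceptual difficulty — the estimate $J = o(n)$ is robust — but the exponents must be handled with care since $\gamma$ can be arbitrarily close to $2$, making the gap between $o(M)$ and $M$ shrink; one should verify the argument still goes through (with a possibly larger threshold for "large $n$") in that regime.
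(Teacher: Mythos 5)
Your proposal is correct and follows essentially the same route as the paper: verify \Condone\ by showing $\Delta = O(n^{1/(\gamma-1)})$ and $J = o(n)$ while $M = \Theta(n)$, then invoke Theorems~\ref{thm:stable} and~\ref{thm:mixing2}. The only difference is that the paper imports the bound $J\leq (Cn)^{(2\gamma-3)/(\gamma-1)^2}$ from \cite[eq.~(54)]{Gao-power} as a black box, whereas you rederive it via a threshold/tail-summation argument whose optimal threshold $j = n^{(\gamma-2)/(\gamma-1)^2}$ recovers exactly that exponent, which is indeed strictly less than $1$ precisely because $(\gamma-2)^2>0$.
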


\begin{proof} By definition, the maximum degree $\Delta$ is of order at most $n^{1/(\gamma-1)}$. 
It has been shown in~\cite[eq.~(54)]{Gao-power} that 
$J\leq (Cn)^{(2\gamma-3)/(\gamma-1)^2}$. 
Again we have $J + \Delta=o(M)$, which implies that \Condone\ holds when $\gamma>2$. 
The statements of the theorem follow by Theorems~\ref{thm:stable} and~\ref{thm:mixing2}.
\end{proof}

\subsection{Bi-regular degree sequences}

Another example examined in~\cite{GW-power} is $\dvec$ whose components only take two values $\delta$ and $\Delta$. This family is related to the ``multi-star graphs'' considered by Zhao~\cite{zhao},
if all star centres have the same degree.

\begin{theorem}
Assume that all components of $\dvec$ take value in $\{\delta,\Delta\}$ where $\delta\le \Delta$,
and let $\ell$ be the number of vertices with degree $\Delta$. 
Assume that one of the following conditions holds.
\begin{itemize}
\item[\emph{(a)}] $\ell\ge \Delta$ and $\Delta \ell +(n-\ell)\delta>2\Delta^2+18\Delta+56$.
\item[\emph{(b)}] $\ell<\Delta$ and $n\delta>(\Delta-\delta)\ell+2\delta\Delta+18\Delta+56$.
\end{itemize}
Then the switch chain on $\Omega(\dvec)$
mixes in polynomial time, 
with mixing time bounded by~\emph{(\ref{mixingbound})}.  
Furthermore, the family of all graphical degree sequences $\dvec$
which satisfy~\emph{(a)} or~\emph{(b)} is stable.
\end{theorem}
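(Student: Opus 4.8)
The plan is to verify that \Condone\ holds for every degree sequence satisfying hypothesis~(a) or~(b), and then simply invoke Theorems~\ref{thm:stable} and~\ref{thm:mixing2}. So first I would record the basic parameters. Since every component of $\dvec$ equals $\delta$ or $\Delta$ and exactly $\ell$ vertices have degree $\Delta$, we have $M(\dvec) = \Delta\ell + (n-\ell)\delta$. We may assume $\ell\ge 1$ and $\delta<\Delta$, since otherwise $\dvec$ is regular and both the stability claim and the mixing bound follow from the regular case already discussed in this section; thus $\Delta(\dvec)=\Delta$.

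The one computation that requires a little care is $J(\dvec)$, the sum of the $\Delta(\dvec)$ largest entries of $\dvec$, and this is exactly where the two cases of the theorem come from. If $\ell\ge\Delta$, the $\Delta$ largest entries are all equal to $\Delta$, so $J(\dvec)=\Delta^2$; plugging $M(\dvec)$ and $J(\dvec)=\Delta^2$ into the inequality $M(\dvec)>2J(\dvec)+18\Delta(\dvec)+56$ reproduces verbatim the inequality in part~(a). If instead $\ell<\Delta$, the $\Delta$ largest entries consist of $\ell$ copies of $\Delta$ and $\Delta-\ell$ copies of $\delta$, so $J(\dvec)=\ell\Delta+(\Delta-\ell)\delta$; substituting $M(\dvec)=\ell\Delta+n\delta-\ell\delta$ and this value of $J(\dvec)$ into $M(\dvec)>2J(\dvec)+18\Delta(\dvec)+56$ and collecting terms (everything is linear in $n,\ell,\delta,\Delta$, so this is routine bookkeeping) yields precisely $n\delta>(\Delta-\delta)\ell+2\delta\Delta+18\Delta+56$, which is the inequality in part~(b).

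Hence in either case \Condone\ holds, so Theorem~\ref{thm:stable}(a) gives that $\dvec$ is $(8,8)$-stable and Theorem~\ref{thm:stable}(b) gives that $\dvec$ is strongly stable (and therefore P-stable); since the constant $\alpha=8$ works uniformly over the whole family, the family of all graphical $\dvec$ satisfying~(a) or~(b) is stable in the sense of this section. The polynomial mixing-time bound~(\ref{mixingbound}) for the switch chain on $\Omega(\dvec)$ then follows directly from Theorem~\ref{thm:mixing2}. I do not expect any genuine obstacle here: the only points needing attention are the case split that determines $J(\dvec)$, the trivial exclusion of the regular (degenerate) subcases, and checking that the algebraic rearrangement in the case $\ell<\Delta$ matches the stated bound exactly.
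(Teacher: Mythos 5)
Your proposal is correct and follows the same route as the paper, which simply asserts that (a) or (b) implies \Condone\ and then cites Theorems~\ref{thm:stable} and~\ref{thm:mixing2}; your case analysis computing $J(\dvec)=\Delta^2$ when $\ell\ge\Delta$ and $J(\dvec)=\ell\Delta+(\Delta-\ell)\delta$ when $\ell<\Delta$ is exactly the verification the paper leaves to the reader. The side remark about excluding the regular subcases is harmless but not needed, since in those degenerate cases the stated inequalities still imply \Condone\ directly (with $\Delta(\dvec)\le\Delta$ and the corresponding $J(\dvec)$ only smaller).
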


\begin{proof}
It is easy to check that if (a) or (b) holds then \Condone\ holds.
The assertions follow by Theorems~\ref{thm:stable} and~\ref{thm:mixing2}.
\end{proof}

\subsection{Long-tailed power-law degree sequences}

The first author and Wormald~\cite{GW-power} introduced {\em long-tailed power-law} degree sequences, which allow even longer tails than the power-law degree sequences. 
We say $\dvec$ follows a long-tailed power law with parameters $(\alpha,\beta,\gamma)$ if there is a constant $C>0$ such that for every $n$,
\begin{itemize}
\item each component of $\dvec$ is non-zero and either at most $C$, or at least $n^{\alpha}$;
\item for every integer $i\ge 1$, the number of components of $\dvec$ whose value is at least $in^{\alpha}$ but less than $(i+1)n^{\alpha}$ is at most $Cn^{\beta}i^{-\gamma}$.
\end{itemize}
Note that a long-tailed power-law degree sequence with $\alpha=0$ and $\beta=1$ is the same as a power-law density-bounded
degree sequence.

\begin{theorem}
Assume that a nontrivial graphical degree sequence $\dvec$ follows a long-tailed power law with 
parameters $(\alpha,\beta,\gamma)$.  
Further suppose that one of the following conditions holds:
\begin{itemize}
\item[\emph{(a)}] $1<\gamma\le 2$ and $\alpha+2\beta/\gamma<1$;
\item[\emph{(b)}] $\gamma>2$ and $\alpha+\beta<1$.
\end{itemize}
Then the switch chain on $\Omega(\dvec)$
mixes in polynomial time, 
with mixing time bounded by~\emph{(\ref{mixingbound})}.  
Furthermore, 
the family of all graphical degree sequences $\dvec$
which follow a long-tailed power law with parameters $(\alpha,\beta,\gamma)$
such that~(a) or~(b) holds is stable.
\end{theorem}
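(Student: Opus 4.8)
The plan is to reduce everything to verifying \Condone, exactly as in the preceding subsections of this section: once \Condone\ is established, Theorem~\ref{thm:stable} shows that the family is $(8,8)$-stable, strongly stable and $P$-stable, and Theorem~\ref{thm:mixing2} gives the claimed mixing bound~(\ref{mixingbound}). Since $\dvec$ is nontrivial we have $M=M(\dvec)\ge n$, so it suffices to prove $J(\dvec)+\Delta(\dvec)=o(n)$. We may also assume $\alpha<1$: if $\alpha\ge 1$ then for $n\ge 2$ no component can reach $n^{\alpha}>n-1$, so every component is at most the constant $C$, giving $\Delta\le C$ and $J\le C\Delta=O(1)$.

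First I would bound $\Delta$. By the defining property, the number of components with value in $[in^{\alpha},(i+1)n^{\alpha})$ is at most $Cn^{\beta}i^{-\gamma}$, which is below $1$ once $i>i_0:=(Cn^{\beta})^{1/\gamma}$; hence either there is no component exceeding $C$, or $\Delta<(i_0+1)n^{\alpha}=O(n^{\alpha+\beta/\gamma})$. In either case $\Delta=o(n)$, since $\alpha+\beta/\gamma<1$ under both hypotheses: when $\beta\ge 0$ we have $\beta/\gamma\le\beta$ and $\beta/\gamma\le 2\beta/\gamma$, so $\alpha+\beta/\gamma$ is at most $\alpha+\beta<1$ in case~(b) and at most $\alpha+2\beta/\gamma<1$ in case~(a); when $\beta<0$ it is simply at most $\alpha<1$.

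Next I would bound $J$. Since $\gamma>1$, the number of components exceeding $C$ (the ``large'' ones) is at most $\sum_{i\ge 1}Cn^{\beta}i^{-\gamma}=O(n^{\beta})$. Every large component counted at level $i$ has value less than $(i+1)n^{\alpha}\le 2in^{\alpha}$, so the sum of all large components is at most
\[
\sum_{i\le i_0} Cn^{\beta}i^{-\gamma}\cdot 2in^{\alpha} = 2Cn^{\alpha+\beta}\sum_{i\le i_0} i^{1-\gamma}.
\]
The inner sum is $O(1)$ when $\gamma>2$, is $O(\log i_0)=O(\log n)$ when $\gamma=2$, and is $O(i_0^{\,2-\gamma})=O(n^{\beta(2-\gamma)/\gamma})$ when $1<\gamma<2$; substituting these and using $1+(2-\gamma)/\gamma=2/\gamma$, the sum of all large components is $O(n^{\alpha+\beta})$ in case~(b), and $O(n^{\alpha+\beta}\log n)$ (if $\gamma=2$) or $O(n^{\alpha+2\beta/\gamma})$ (if $\gamma<2$) in case~(a) — all of which are $o(n)$ precisely because of the inequalities assumed in~(a) and~(b). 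Finally, writing $J=\sum_{i=1}^{\Delta}d_{\mu(i)}$ and splitting the $\Delta$ largest degrees into the large ones (total weight $o(n)$, as just shown) and the remaining ones (each at most $C$, at most $\Delta$ of them, hence contributing $O(\Delta)=o(n)$) gives $J=o(n)$. Thus $J+\Delta=o(M)$, so \Condone\ holds for all sufficiently large $n$, and the theorem follows from Theorems~\ref{thm:stable} and~\ref{thm:mixing2}.

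The main obstacle is bookkeeping rather than conceptual: one must keep the three regimes of the tail sum $\sum i^{1-\gamma}$ (the cases $\gamma>2$, $\gamma=2$, $1<\gamma<2$) correctly aligned with the two hypotheses~(a),(b), and must carefully pass from ``sum of all large degrees'' to $J$, the sum of the $\Delta$ largest degrees — for which the separate count $O(n^{\beta})$ of large degrees together with $\Delta=o(n)$ is exactly what is needed. The degenerate cases ($\alpha\ge 1$; $\beta\le 0$, where $i_0<1$ and there are no large degrees; $\gamma=2$, where there is a spare logarithmic factor) each need a word but are routine.
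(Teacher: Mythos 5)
Your proposal is correct and follows essentially the same route as the paper's proof: bound $\Delta=O(n^{\alpha+\beta/\gamma})$ from the level at which the counts $Cn^{\beta}i^{-\gamma}$ drop below one, bound the total weight $H$ of the degrees exceeding $C$ by $O(n^{\alpha+\beta})\sum_{i}i^{1-\gamma}$ split into the regimes $\gamma>2$ and $1<\gamma\le 2$, deduce $J\le H+C\Delta=o(n)=o(M)$, and invoke Theorems~\ref{thm:stable} and~\ref{thm:mixing2}. Your extra attention to the degenerate cases ($\alpha\ge 1$, $\beta\le 0$, and the logarithmic factor at $\gamma=2$) is sound and only makes explicit what the paper leaves implicit.
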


\begin{proof} As $\dvec$ is nontrivial, we have $M=\Omega(n)$. We will verify that if the parameters $(\alpha,\beta,\gamma)$ satisfy
(a) or (b) then $\Delta=o(n)$ and $J=o(n)$. 
This will imply that \Condone\ holds, and then the assertions of the theorem follow by Theorems~\ref{thm:stable} and~\ref{thm:mixing2}.

Let $\hat i=Cn^{\beta/\gamma}$ for some sufficiently large constant $C>0$. By definition, 
$\Delta=O(\hat i n^{\alpha})=O(n^{\alpha+\beta/\gamma})$. 
Let $\calH$ denote the set of vertices with degree at least $n^{\alpha}$ and let $H$ denote the sum 
of the degrees of the vertices in $\calH$. 
Suppose that
$ d_{\mu(1)}\geq d_{\mu(2)} \geq \cdots \geq d_{\mu(n)}$
for some permutation $\mu\in S_n$, and let
$\calL = \{ \mu(1),\ldots, \mu(\Delta)\}$
be the set of the first $\Delta$ vertices with respect to $\mu$.  Then  no vertex outside of
$\calL$ has higher degree than a vertex in $\calL$. 
Furthermore, $\calL$ contains at most all vertices in $\calH$ and $\Delta$ vertices in $[n]\setminus \calH$, whose degrees are at most $C$. It follows immediately that $J\le H+C\Delta$. 

First consider the case $1<\gamma\le 2$. The additional condition $\alpha+2\beta/\gamma<1$ ensures that $\Delta=o(n)$. We also have
\[
H=O(n^{\alpha+\beta}) \sum_{i=1}^{\hat i} i^{1-\gamma}=O( \log n) \cdot n^{\alpha+\beta} n^{(\beta/\gamma)(2-\gamma)}=O(\log n) \cdot n^{\alpha+2\beta/\gamma}=o(n),
\]
where the factor $\log n$ above accounts for the case $\gamma=2$. 
It follows then that $J\le H+C\Delta=o(n)$ when~(a) holds.

Next consider the case $\gamma>2$. The additional condition $\alpha+\beta<1$ ensures that $\Delta=o(n)$. Moreover, 
\[
H=O(n^{\alpha+\beta}) \sum_{i=1}^{\hat i} i^{1-\gamma} =O(n^{\alpha+\beta})=o(n).
\]
This implies that $J=o(n)$ when~(b) holds, completing the proof.
\end{proof}

\section{Comparison of different notions of stability}\label{sec:comparison}

Jerrum and Sinclair~\cite{JS90} initiated the study of stable degree sequences,
introducing the notion of P-stability.  
Given a graphical degree sequence $\dvec=(d_1,\ldots, d_n)$, let
\[ \Omega'(\dvec) = \bigcup_{\dvec'} \Omega(\dvec')\]
where $\dvec'$ ranges over the set of graphical degree sequences
$\dvec'=(d'_1,\ldots, d'_n)$ such that
$\norm{\dvec'-\dvec}_1\leq 2$ and $d_j'\leq d_j$ for all $j\in [n]$.

\begin{definition}[\cite{JS90}]\label{def:Pstable}
\emph{ 
A family $\mathcal{D}$ of degree sequences is \emph{P-stable} if there exists a polynomial $p$
such that for all positive integers $n$ and 
for every degree sequence $\dvec=(d_1,\ldots, d_n)\in\mathcal{D}$
\[ |\Omega'(\dvec)| \leq p(n)\, |\Omega(\dvec)|.\]
}
\end{definition}

Trivially, any finite set of degree sequences is P-stable. 
Recall that a degree sequence is nontrivial if every component is positive.
We now prove that P-stability is equivalent to 2-stability for families of nontrivial degree sequences. 

\begin{proposition}\label{p:equivalence} 
A family of nontrivial degree sequence is P-stable if and only if it is 2-stable.
 \end{proposition}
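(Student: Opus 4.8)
The plan is to prove the two implications separately, exploiting that the ``$\ell_1$-ball of radius $2$'' appearing in the definition of $2$-stability is, up to constants and simple manipulations, the same as the set of perturbations $\dvec'$ with $\dvec'\le\dvec$ componentwise and $\norm{\dvec'-\dvec}_1\le 2$ that defines $\Omega'(\dvec)$. The easy direction is $2$-stable $\Rightarrow$ P-stable: if $\dvec\in\mathcal{D}_{2,\alpha}$, then for every graphical $\dvec'$ with $\norm{\dvec'-\dvec}_1\le 2$ we have $|\Omega(\dvec')|\le M(\dvec)^\alpha|\Omega(\dvec)|$, and since $\Omega'(\dvec)$ is a union over at most $O(n^2)$ such sequences $\dvec'$ (there are at most $\binom{n}{2}+n+1$ ways to pick which one or two components to decrease), we get $|\Omega'(\dvec)|\le O(n^2)\,M(\dvec)^\alpha|\Omega(\dvec)|$. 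Since $M(\dvec)=\norm{\dvec}_1\le n\Delta(\dvec)<n^2$ for nontrivial graphical $\dvec$ (indeed $M(\dvec)\le n(n-1)$), this bound is polynomial in $n$, so $\mathcal{D}$ is P-stable. This direction needs no nontriviality beyond ensuring $M(\dvec)$ is polynomially bounded in $n$.

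For the harder direction, P-stable $\Rightarrow$ $2$-stable, suppose $|\Omega'(\dvec)|\le p(n)|\Omega(\dvec)|$ for all $\dvec\in\mathcal{D}$. I must show that for \emph{every} graphical $\dvec'$ with $\norm{\dvec'-\dvec}_1\le 2$ we have $|\Omega(\dvec')|\le M(\dvec)^\alpha|\Omega(\dvec)|$ for a suitable constant $\alpha$. There are three types of such $\dvec'$: (i) $\dvec'\le\dvec$ (one or two components decreased, or one component decreased by two), (ii) $\dvec'\ge\dvec$ (one or two increased), and (iii) $\dvec'=\dvec+\boldsymbol{e}_i-\boldsymbol{e}_j$. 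For type (i), $\Omega(\dvec')\subseteq\Omega'(\dvec)$ directly, so $|\Omega(\dvec')|\le p(n)|\Omega(\dvec)|$. For types (ii) and (iii), the trick is to run the P-stability hypothesis at $\dvec'$ rather than at $\dvec$: if $\dvec'\ge\dvec$ with $\norm{\dvec'-\dvec}_1\le 2$, then $\dvec\le\dvec'$ and $\norm{\dvec-\dvec'}_1\le 2$, so $\Omega(\dvec)\subseteq\Omega'(\dvec')$, giving $|\Omega(\dvec)|\le p(n)|\Omega(\dvec')|$ — but that is the wrong direction. So instead I should observe that $\dvec'\ge\dvec$ means $\Omega(\dvec')$ is obtained from $\Omega(\dvec)$ by \emph{adding} edges, and bound $|\Omega(\dvec')|$ by relating it to $\Omega'$ of some sequence. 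Concretely: for $\dvec'=\dvec+\boldsymbol{e}_i+\boldsymbol{e}_j$, note that for any $G'\in\Omega(\dvec')$, deleting an edge incident to $i$ and an edge incident to $j$ (which exist since $d'_i,d'_j\ge 1$, using nontriviality — actually $\ge 1$ even without it here since $d'_i\ge d_i+1\ge 1$) yields a graph whose degree sequence $\dvec''$ satisfies $\norm{\dvec''-\dvec}_1\le 2$ and $\dvec''\le\dvec$, i.e.\ a graph in $\Omega'(\dvec)$; counting how many $G'$ map to a given such graph (at most $\Delta(\dvec)^2$, by choosing back the two deleted edge-endpoints) gives $|\Omega(\dvec')|\le\Delta(\dvec)^2|\Omega'(\dvec)|\le\Delta(\dvec)^2 p(n)|\Omega(\dvec)|$. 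The analogous argument for $\dvec'=\dvec+\boldsymbol{e}_i-\boldsymbol{e}_j$: delete an edge incident to $i$, producing a graph whose degree sequence is $\dvec-\boldsymbol{e}_j$ (or a near-by one) — I need to be slightly careful to land inside $\Omega'(\dvec)$, but deleting one edge at $i$ gives degree sequence $\dvec'-\boldsymbol{e}_{i}-\boldsymbol{e}_{(\text{other endpoint})}$, which lies within $\ell_1$-distance $2$ of $\dvec$ and is coordinatewise $\le\dvec$ except possibly at one coordinate; a second deletion handles that, with only a $\Delta^2$ or $\Delta^3$ overcounting factor. In all cases the multiplier is at most $\mathrm{poly}(n)\cdot\Delta(\dvec)^{O(1)}\le M(\dvec)^{O(1)}$, so choosing $\alpha$ to be a large enough absolute constant times the degree of $p$ gives $|\Omega(\dvec')|\le M(\dvec)^\alpha|\Omega(\dvec)|$ uniformly over $\mathcal{D}$.

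I anticipate the main obstacle to be the bookkeeping in the ``$\dvec'\ge\dvec$'' and mixed cases: one must verify that after deleting one or two edges from a graph in $\Omega(\dvec')$ the resulting degree sequence genuinely lies in the defining set of $\Omega'(\dvec)$ (componentwise $\le\dvec$ \emph{and} $\ell_1$-distance $\le 2$ from $\dvec$), and simultaneously keep the number of pre-images bounded by a fixed polynomial in $\Delta$ so that the resulting constant $\alpha$ is uniform over the whole family. Nontriviality enters to guarantee that the required edges to delete actually exist (a vertex of positive target degree has an incident edge) and to ensure $M(\dvec)$ grows at least linearly so that $\mathrm{poly}(n)$ is dominated by $M(\dvec)^{O(1)}$. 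Once these two points are pinned down, assembling $\alpha = c(\deg p + 1)$ for a suitable absolute constant $c$ finishes the proof, and the same constant works for every $\dvec\in\mathcal{D}$, which is exactly what $2$-stability demands.
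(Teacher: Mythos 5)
Your proposal is correct and follows essentially the same route as the paper: the forward direction is the identical union bound over $O(n^2)$ perturbed sequences combined with $M(\dvec)\le n^2$, and the reverse direction uses the same edge-deletion switchings to show $|\Omega(\dvec')|\le \mathrm{poly}(n)\,|\Omega'(\dvec)|$ for each $\dvec'\in\N_2(\dvec)$, with nontriviality supplying $M(\dvec)\ge n$ so that the polynomial in $n$ is absorbed into $M(\dvec)^{\alpha}$. The one cosmetic slip is the preimage count for re-inserting the deleted edges, which is naturally bounded by $n^2$ (choices of the other endpoints) rather than $\Delta(\dvec)^2$; either bound is polynomial, and the paper itself uses $n^2$.
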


\begin{proof} 
Observe that $\Omega'(\dvec)\subseteq
{\cal N}_2(\dvec)$, and that $\Omega'(\dvec)$ is
a union over at most $n^2$ degree sequences $\dvec'$.
Therefore, if ${\cal D}$ is 2-stable then
there exists some constant $\alpha > 0$ such that 
\[ |\Omega'(\dvec)| \leq n^2 M(\dvec)^\alpha\, |\Omega(\dvec)|
  \leq n^{2+2\lceil \alpha\rceil}\, |\Omega(\dvec)|.\]
This shows that ${\cal D}$ is P-stable with polynomial
$p(n) = n^{2+2\lceil \alpha\rceil}$,  and hence 2-stability
implies P-stability.

For the converse, let ${\cal D}$ be a P-stable family of nontrivial degree sequences
with polynomial $p(n)$. 
For any $\dvec\in {\cal D}$ we have $M(\dvec)\geq n$, since $\dvec$ is nontrivial.
We will show that 
\begin{equation}
\label{sufficient}
|\Omega(\dvec')|\le n^2  |\Omega'(\dvec)| \quad \text{ for all
$\dvec\in {\cal D}$ and $\dvec'\in {\cal N}_2(\dvec)$.} 
\end{equation}
This yields our proposition as 
then, for all $\dvec\in\mathcal{D}$ and all $\dvec'\in N_2(\dvec)$,
\[
|\Omega(\dvec')| \leq n^2\, |\Omega'(\dvec)|
 \leq n^2\, p(n)\, |\Omega(\dvec)| \le M(\dvec)^{\alpha} |\Omega(\dvec)|
\]
for some constant $\alpha>0$, using P-stability for the second inequality. 
It is sufficient to consider $\dvec'$ such that $d'_i\ge d_i+1$ for some $i$, as otherwise $\Omega(\dvec')\subseteq \Omega'(\dvec)$. 

\medskip

{\em Case 1: $d'_i=d_i+1$ and $d'_j=d_j+1$ for some $i\neq j$.} Define a switching from $\Omega(\dvec')$ to $\Omega'(\dvec)$ as follows. 
For any $G\in \Omega(\dvec')$, the switching deletes an edge incident to $i$ and an edge incident to $j$. The resulting graph $G'$ is in  $\Omega'(\dvec)$. 
Each $G$ can be switched to at least one graph in $\Omega'(\dvec)$, and each 
graph in $\Omega'(\dvec)$ can be produced by at most $n^2$ graphs in 
$\Omega(\dvec')$ using this switching. 
Hence $|\Omega(\dvec')|\le n^2  |\Omega'(\dvec)|$.
Therefore (\ref{sufficient}) holds in this case.

\smallskip

The other cases where $d_i'=d_i+2$ for some $i$, or $d'_i=d_i+1$ and $d'_j=d_j-1$ for some $i\neq j$ can be analysed in a similar way. If $d_i'=d_i+2$
for some $i$ then the switching chooses and deletes two distinct
edges incident with $i$, while if
$d'_i=d_i+1$ and $d'_j=d_j-1$ then the
switching deletes an edge incident with $i$. 
In both cases, each graph in $\Omega'(\dvec)$ can be
produced by at most $n$ graphs in $\Omega(\dvec')$ in this way.
Therefore (\ref{sufficient}) also holds in these cases, completing the proof.
\end{proof}

\smallskip

The following proposition follows immediately by~(\ref{k}) and the equivalence of 2-stability and P-stability.

\begin{proposition}\label{p:stable}
 If a family of degree sequences is 8-stable then it is P-stable.
 \end{proposition}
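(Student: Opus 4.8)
The plan is to deduce Proposition~\ref{p:stable} directly from the chain of implications already assembled in the excerpt, rather than reprove anything from scratch. Suppose $\mathcal{D}$ is an $8$-stable family of degree sequences. By Definition~\ref{def:k-stability} this means there is a constant $\alpha>0$ with $\mathcal{D}\subseteq\mathcal{D}_{8,\alpha}$. The monotonicity observation~(\ref{k}) (with $k_1=8\ge 2=k_2$) immediately gives $\mathcal{D}_{8,\alpha}\subseteq\mathcal{D}_{2,\alpha}$, so $\mathcal{D}\subseteq\mathcal{D}_{2,\alpha}$; that is, $\mathcal{D}$ is $2$-stable. Then I would invoke Proposition~\ref{p:equivalence}, which identifies $2$-stability with P-stability (for nontrivial degree sequences), to conclude that $\mathcal{D}$ is P-stable. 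That is the whole argument in two lines, which is exactly why the excerpt says the proposition ``follows immediately.''

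The one genuine wrinkle worth flagging is the nontriviality hypothesis: Proposition~\ref{p:equivalence} is stated only for families of \emph{nontrivial} degree sequences, whereas Proposition~\ref{p:stable} as written carries no such restriction. The cleanest fix is to observe that a degree sequence $\dvec$ with some zero components contributes isolated vertices that play no role in $\Omega(\dvec)$, $\Omega'(\dvec)$, or $\mathcal{N}_2(\dvec')$ in a way that affects the relevant ratios, so one may pass to the nontrivial part $\widehat{\dvec}$ obtained by deleting zero entries; $(8,\alpha)$-stability of $\dvec$ transfers to $(8,\alpha)$-stability of $\widehat{\dvec}$ (up to adjusting $\alpha$ by a bounded amount, since $M(\dvec)=M(\widehat{\dvec})$ and $|\Omega(\dvec')|=|\Omega(\widehat{\dvec'})|$ for the relevant perturbations), and similarly P-stability is insensitive to isolated vertices. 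Alternatively, and more in keeping with the paper's style, one simply notes that throughout the applications the degree sequences are assumed nontrivial, so stating the proposition for nontrivial families loses nothing; I would add the word ``nontrivial'' to the hypothesis if pressed for full rigour.

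So the key steps, in order, are: (1) unpack $8$-stability to get $\mathcal{D}\subseteq\mathcal{D}_{8,\alpha}$ for some $\alpha$; (2) apply~(\ref{k}) to get $\mathcal{D}_{8,\alpha}\subseteq\mathcal{D}_{2,\alpha}$, hence $2$-stability; (3) apply Proposition~\ref{p:equivalence} to get P-stability; (4) dispatch the nontriviality technicality as above. I do not expect any real obstacle here — the substantive content lives entirely in Proposition~\ref{p:equivalence} and in the monotonicity~(\ref{k}), both of which are already established. The only thing requiring a moment's care is making sure the quantifier order in ``there exists a constant $\alpha$'' versus ``there exists a polynomial $p$'' lines up correctly when passing between the two notions, but this is precisely what the proof of Proposition~\ref{p:equivalence} handles, so it can be cited as a black box.
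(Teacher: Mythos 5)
Your argument is exactly the paper's: the proposition is deduced immediately from the monotonicity~(\ref{k}) (so $8$-stability implies $2$-stability) together with Proposition~\ref{p:equivalence}. Your extra remark about the nontriviality hypothesis is a fair observation about a detail the paper glosses over, but it does not change the fact that the approach is identical.
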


A slightly adjusted definition of P-stability was studied by Jerrum, Sinclair and McKay~\cite{JMS}.  We 
remark that the definition of P-stability from~\cite{JMS} is equivalent to 2-stability, and thus is equivalent 
to the original
Jerrum--Sinclair definition of P-stability. The equivalence can be proved by a similar argument as in 
Proposition~\ref{p:equivalence}.

\bigskip

Jerrum and Sinclair~\cite{JS90} introduced the idea of P-stability
in order to prove a rapid mixing result for a Markov chain for
sampling graphs with given degree sequence.
This Markov chain, which we will call the \emph{JS chain},
has state space $\Omega'(\dvec)$, where $\dvec$ is the target degree
sequence.  The JS chain produces an almost uniform sample from $\Omega'(\dvec)$,
which is rejected if the output is not an element of
$\Omega(\dvec)$.  The expected number of restarts required
is polynomial if and only if 
the ratio $|\Omega'(\dvec)|/|\Omega(\dvec)|$ is bounded above by some polynomial in $n$,
leading to the definition of P-stable families.

For our purposes, we just need to know 
that the possible transitions $G\mapsto G'$ of the JS chain are as follows:
\begin{itemize}
\item (\emph{insertion}):  $G'$ is obtained from $G$ by inserting
an edge.  Here $G'$ has degree $\dvec$.
\item (\emph{deletion}):  $G'$ is obtained from $G$ by deleting an
edge of $G$.  Here $G$ has degree $\dvec$.
\item (\emph{hinge-flip}): $G'$ is obtained from $G$ by deleting
an edge $uv$ and inserting an edge $wv$.  Here neither $G$ nor $G'$ have
degree $\dvec$, as vertex $w$ has degree $d_w-1$ in $G$ and
vertex $u$ has degree $d_u-1$ in $G'$.
\end{itemize}
All such pairs $(G,G')$ correspond to a transition of the JS chain, 
and there are no other transitions.
(The terminology ``hinge-flip'' comes from~\cite{AK}.)

Amanatidis and Kleer~\cite{AK} defined a stronger notion of
stability, which they called \emph{strong stability}, as follows.
Say that two graphs $G, H$ are at distance $r$ in the JS chain if
$H$ can be obtained from $G$ using at most $r$ transitions.
Let $\operatorname{dist}(G,\dvec)$ be the minimum distance of
$G$ from an element of $\Omega(\dvec)$, and define
\[ k_{\operatorname{JS}}(\dvec) = \max_{G'\in\Omega'(\dvec)}\, 
   \operatorname{dist}(G,\dvec).
\]
That is, from any element of $\Omega'(\dvec)$ it is possible to
reach an element of $\Omega(\dvec)$ using at most $r$ transitions of
the JS chain.

\begin{definition}
A family $\mathcal{D}$ of graphical degree sequences is strongly stable
if there is a constant $\ell$ such that $k_{\operatorname{JS}}(\dvec)\leq \ell$
for all $\dvec\in\mathcal{D}$.
\end{definition}

Amanatidis and Kleer proved that every strongly stable family is 
P-stable~\cite[Proposition~3]{AK}, and that the switch chain has
polynomial mixing time for all degree sequences from a strongly stable 
family~\cite[Theorem~4]{AK}.  Hence, the strong stability is a (possibly) 
stronger notion than the P-stability. 
However, we do not know the relationship between P-stability and $k$-stability for $k>2$. 
To conclude, we have the following relations between the various notions of stability under discussions:

\smallskip

\begin{center}
\renewcommand{\arraystretch}{1.4}
\begin{tabular}{ccc}
\framebox{2-stability} & \, $\Longleftrightarrow$ \quad & \framebox{P-stability}\\
$\big\Uparrow$ & & $\big\Uparrow$\\
\framebox{$k$-stability ($k\geq 2$)} & $\cdot\cdot$ ??$\cdot\cdot$  & \framebox{strong stability}
\end{tabular}
\end{center}

\medskip

Next, we prove that \Condtwo, which is a sufficient condition for $(2,2)$-stability, also implies strongly stability. 

\begin{proof}[Proof of Theorem~\ref{thm:stable}(b)]
It suffices to observe that each switching operation used in the proof of 
Theorem~\ref{thm:main} can be implemented using a sequence of at most three
transitions of the Jerrum-Sinclair chain.  This implies that $\mathcal{D}$
is strongly stable, as every $\dvec\in\mathcal{D}$ satisfies
$k_{\operatorname{JS}}(\dvec)\leq 3$.
Specifically, 
an $(i^-,j^-)$-switching (Case~1) can be implemented
by a deletion followed by two hinge flips, 
an $(i^+,j^+)$-switching (Case~2) can be implemented by a hinge flip
followed by an insertion,  and an $(i^-,j^+)$-switching
(Case~3) can be implemented using two hinge flips.
\end{proof}

\section{Directed graphs}
\label{sec:directed}

A \emph{directed graph} (or \emph{digraph}) $G=(V,A)$ consists
of a set $V$ of vertices and a set $A = A(G)$ of \emph{arcs}
(directed edges).
A \emph{directed degree sequence} is a pair $(\dvec^-,\, \dvec^+)$ of 
sequences of nonnegative integers,
\[ \dvec^- = (d_1^-,\ldots, d_n^-),\qquad \dvec^+ = (d_1^+,\ldots, d_n^+)\]
such that  $\sum_{j=1}^n d_j^- = \sum_{j=1}^n d_j^+$.  The sequence
is \emph{digraphical} if there exists a directed graph with vertex set $[n]$
such that
$d_j^-$ is the in-degree and $d_j^+$ is the out-degree of vertex $j$,
for all $j\in [n]$. 
Write $\Omegad$ for the set of all directed graphs with directed degree
sequence $(\dvec^-,\dvec^+)$.  Note that loops are not permitted,
so a directed graph (digraph) in $\Omegad$ corresponds to a bipartite
graph  on vertex set $\{u_1,\ldots, u_n\}\cup \{v_1,\ldots, v_n\}$
such that $u_j$ has degree $d_j^-$ and $v_j$ has degree $d_j^+$
and $(u_j,v_j)$ is not an edge, for $j\in [n]$.
This connection between bipartite graphs and directed graphs is very
well-known and has a long history~\cite{G57,pet}.
Again, we are interested in sequences of directed degree sequences
$(\dvec^-(n),\dvec^+(n))$ indexed by $n$, but usually drop the dependence
on $n$ from our notation.
We note that Erd{\H o}s et al.\ gave a definition
of P-stability for directed degree sequences, 
see~\cite[Definition~7.4]{hungarians}.   We will use a variant of this definition.

The directed switch Markov chain, denoted by $\Md$, has state space
$\Omegad$ and transitions described by the following procedure:
from the current digraph $G\in\Omegad$, choose an unordered pair
$\{ (i,j),(k,\ell)\}$ of distinct arcs of $G$ uniformly at random. 
If $i,j,k,\ell$ are distinct and $\{ (i,j),(k,\ell)\} \cap A(G) = \emptyset$ 
then delete the arcs $(i,j)$, $(k,\ell)$ from $G$ and add the arcs $(i,\ell)$, $(k,j)$ 
to obtain the new state; otherwise, remain at $G$.

The directed switch chain is symmetric but it is not always irreducible, 
unlike the switch chain for directed graphs.
We say that the directed degree sequence $\vecdvec$ is 
\emph{switch-irreducible} if the directed switch chain on $\Omegad$
is irreducible.  The directed switch chain is ergodic on $\Omegad$
for any switch-irreducible digraphical degree sequence $\vecdvec$,
with uniform stationary distribution.
Berger and M{\" u}ller-Hanneman~\cite{BMH} and LaMar~\cite{lamar} provide
characterisations which can be applied
to test whether a given directed degree sequence $\vecdvec$
is switch-irreducible. 
Rather than restricting to switch-irreducible sequences, some authors
allow the directed switch chain to occasionally perform an additional operation, known as a triple swap~\cite{EMMS}, which reverses the arcs of
a directed 3-cycle.  With the addition of this operation, the chain is irreducible for all directed degree sequences.  However, we use the directed
switch chain as defined above, with no triple swaps.

Let
\[ M(\dvec^-,\dvec^+) = \sum_{j=1}^n d_j^- = \sum_{j=1}^n d_j^+\]
be the number of arcs in a directed graph with directed degree sequence $\vecdvec$,
and let 
\[
\Delta^-(\dvec^-) = \max\{d_1^-,\ldots d_n^-\}, \quad 
  \Delta^+(\dvec^+) = \max\{ d_1^+,\ldots d_n^+\}. 
\]
Write
\[
\Delta(\dvec^-,\dvec^+)=\max\{\Delta^-(\dvec^-),\, \Delta^+(\dvec^+)\}.
\]
For any positive integer $k$ and nonnegative real number $\alpha$,
we say that 
the digraphical degree sequence $\vecdvec$ is $(k,\alpha)$\emph{-stable} if
\[ |\Omega(\dvec^{'-},\dvec^{'+})|\leq M(\dvec^-,\dvec^+)^\alpha\, |\Omegad|\]
for every digraphical degree sequence $(\dvec^{'-},\dvec^{'+})$ such that
\[ \norm{\dvec^{'-}-\dvec^-}_1 + \norm{\dvec^{'+}-\dvec^+}_1 \leq k.\]
We say a family of digraphical degree sequences ${\cal D}$ is $k$-\emph{stable} if there exists a fixed $\alpha>0$ such that all degree sequences in ${\cal D}$ are $(k,\alpha)$-stable.

We adapt the definition of P-stability for directed degree sequences
from~\cite[Definition~7.4]{hungarians}.  
Given a digraphical degree sequence $\vecdvec$, let
\[ \Omega'(\dvec^-,\dvec^+) = \bigcup_{(\dvec^{'-},\dvec^{'+})}
    \Omega(\dvec^{'-},\dvec^{'+})\]
where $(\dvec^{'-},\dvec^{'+})$ ranges over the set of digraphical degree sequences
with $d_j^{'-}\leq d_j^-$ and $d_j^{'+}\leq d_j^+$ for all $j\in [n]$,
such that 
\[ \norm{\dvec^{'-} - \dvec^-}_1+\norm{\dvec^{'+} - \dvec^+}_1\leq 2.
\]

\begin{definition}\label{def:Pstable-directed}
\emph{ 
A family $\mathcal{D}$ of directed degree sequences is \emph{P-stable} if there exists a 
polynomial $p$ such that for all positive integers $n$ and 
for every directed degree sequence $\vecdvec\in\mathcal{D}$, 
\[ |\Omega'(\dvec^-,\dvec^+)| \leq p(n)\, |\Omegad|.\]
}
\end{definition}

Note that a family of digraphical degree sequences is P-stable 
if and only if it is 2-stable, as can be shown by
adapting the proof of Proposition~\ref{p:equivalence}.
We will establish the directed analogue of Theorem~\ref{thm:mixing}, stated below.

\begin{theorem}\label{thm:mixing-directed}
Let $\vecdvec$ be a digraphical switch-irreducible directed degree sequence. 
Write $M=M(\dvec^-,\dvec^+)$ and $\Delta=\Delta(\dvec^-,\dvec^+)$.
If $\vecdvec$ is $(12,\alpha)$-stable
then the directed switch chain on $\Omegad$ mixes in polynomial time, with
mixing time $\tau(\varepsilon)$ which satisfies
\[
\tau(\varepsilon) \leq  1200\, \Delta^{16}\,n^{6}\, M^{3+\alpha} \, 
   \Big( M\log M + \log(\varepsilon^{-1})\Big).
\]
\end{theorem}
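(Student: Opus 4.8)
The plan is to mirror the proof of Theorem~\ref{thm:mixing} in the directed setting, replacing the undirected multicommodity flow of~\cite{CDG,GS} by its directed counterpart from~\cite{directed} (as refined in~\cite{hungarians}). First I would recall that, for a switch-irreducible directed degree sequence $\vecdvec$, the chain $\Md$ is ergodic on $\Omegad$ with uniform stationary distribution, and set up the canonical flow: for each ordered pair $(G,G')\in\Omegad^2$ one decomposes the arc-coloured symmetric difference $G\triangle G'$ according to a pairing $\psi$, obtains a canonical path $\gamma_\psi(G,G')$, and shares the flow $\pi(G)\pi(G')=|\Omegad|^{-2}$ equally among these paths. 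Identifying $G,G',Z\in\Omegad$ with their $n\times n$ adjacency matrices (with zero diagonal, since loops are forbidden) and setting $L+Z=G+G'$ produces an arc-labelled digraph $L$; the directed analogue of~\cite[Lemma~2.1]{GS}, established in~\cite{directed,hungarians}, shows that along any canonical path the arcs of $L$ labelled $-1$ or $2$ form a subgraph of one of a fixed finite list of digraphs, each on at most six vertices. Let $\mathcal{L}^\dagger(Z)$ be the set of arc-labelled digraphs consistent with $Z$ whose defect arcs lie in this list.

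Second, I would record a directed version of Theorem~\ref{thm:catherine}: if $|\mathcal{L}^\dagger(Z)|\le g(\vecdvec)\,|\Omegad|$ for all $Z\in\Omegad$, then
\[
\tau(\eps)\le C\, g(\vecdvec)\,\Delta^{16}\,M^{3}\,\big(M\log M+\log(\eps^{-1})\big)
\]
for an absolute constant $C$. This follows from~(\ref{flowbound}) just as in the proof of Theorem~\ref{thm:catherine}: the bipartite configuration model gives $|\Omegad|\le M!$, hence $\log(1/\pi^\ast)\le M\log M$; each transition of a canonical path replaces one arc of $G$ by one arc of $G'$, so $\ell(f)=O(M)$; $1/Q(e)=O\big(M^{2}\,|\Omegad|\big)$; and the directed analogue of~\cite[Lemma~2.3]{GS}, proved in~\cite{directed}, gives $f(e)\le \Delta^{16}\,|\mathcal{L}^\dagger(Z)|/|\Omegad|^{2}$, whence $\rho(f)=O\big(g(\vecdvec)\,\Delta^{16}\,M^{2}\big)$.

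Third, I would prove the directed counting lemma. Given $Z\in\Omegad$, bound $|\mathcal{L}^\dagger(Z)|$ by the number of arc-labelled digraphs meeting the directed analogue of Definition~\ref{def:encoding}. The defect structure embeds into $[n]$ using at most six vertices, so there are $O(n^{6})$ ways to fix the labelled defect arcs $\mathcal{E}$. With $\mathcal{E}$ fixed, all remaining arcs carry label $1$, and the residual in- and out-degrees form a digraphical sequence $(\dvec^{'-},\dvec^{'+})$, so the number of completions is at most $|\Omega(\dvec^{'-},\dvec^{'+})|$. Running through the finite list of defect digraphs one checks that $\norm{\dvec^{'-}-\dvec^-}_1+\norm{\dvec^{'+}-\dvec^+}_1\le 12$ in every case, since each defect arc shifts exactly one in-degree and one out-degree and there are at most six defect arcs. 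As $\vecdvec$ is $(12,\alpha)$-stable, $|\Omega(\dvec^{'-},\dvec^{'+})|\le M^\alpha\,|\Omegad|$, so $|\mathcal{L}^\dagger(Z)|\le C'\,n^{6}\,M^{\alpha}\,|\Omegad|$ for an absolute constant $C'$. Taking $g(\vecdvec)=C'\,n^{6}\,M^{\alpha}$ in the black-box bound and absorbing numerical factors yields $\tau(\eps)\le 1200\,\Delta^{16}\,n^{6}\,M^{3+\alpha}\,\big(M\log M+\log(\eps^{-1})\big)$.

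The main obstacle is the combinatorial bookkeeping that cannot be quoted verbatim from the undirected argument: establishing the directed analogue of~\cite[Lemma~2.1(ii)]{GS} --- the explicit finite list of possible defect digraphs of $\Md$, with the bound of six on their number of vertices --- and then reading off from that list both the congestion exponent $\Delta^{16}$ (in place of the undirected $\Delta^{14}$) and the perturbation bound $12$. These are precisely where the numbers $12$, $16$ and the constant $1200$ in the statement are pinned down; they require carrying through the directed flow analysis of~\cite{directed,hungarians} but involve no new idea. Some care is also needed to invoke switch-irreducibility of $\vecdvec$ only where it is genuinely used, namely to guarantee that $\Md$ is ergodic on $\Omegad$; the canonical-path construction and the counting are unaffected by it.
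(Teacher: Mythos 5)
Your proposal follows essentially the same route as the paper: a directed analogue of the black-box flow bound (with congestion exponent $\Delta^{16}$ quoted from the directed version of the Greenhill--Sfragara lemmas) combined with a counting lemma that fixes the $O(n^6)$ defect-arc configurations and invokes $(12,\alpha)$-stability for the completions. One caveat: your heuristic for the perturbation bound ("each defect arc shifts exactly one in-degree and one out-degree and there are at most six defect arcs") is not right as stated --- the configurations have at most \emph{five} defect arcs, and an arc labelled $2$ shifts the in-degree of its head and the out-degree of its tail each by $2$, so the naive per-arc count gives up to $16$, not $12$; the bound $12$ only emerges from the case check you mention, using cancellation at vertices where $-1$- and $2$-labelled arcs meet. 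Since you explicitly defer to running through the finite list of configurations, this is a slip in the justification rather than a gap in the argument.
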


Given a directed degree sequence $\vecdvec$, suppose that $\rho,\xi\in S_n$ are 
permutations such that
$d^-_{\mu(1)}\geq d^-_{\mu(2)}\geq \cdots \geq d^-_{\mu(n)}$ and
$d^+_{\xi(1)}\geq d^+_{\xi(2)}\geq \cdots \geq d^+_{\xi(n)}$. 
Let
\[ J^-(\dvec^-,\dvec^+) = \sum_{\ell=1}^{\Delta^-(\dvec^-)} d^+_{\xi(\ell)},\qquad
 J^+(\dvec^-,\dvec^+) = \sum_{\ell=1}^{\Delta^+(\dvec^+)} d^-_{\mu(\ell)}.\]
These will turn out to be the appropriate directed analogues of the parameter
$J(\dvec$) used for undirected graphs. When it causes no confusion, we drop $(\dvec^-,\dvec^+)$ from 
notation such at $M(\dvec^-,\dvec^+)$, etc.

\begin{theorem}\label{thm:stable-directed}
Let $\dvec$ be a digraphical directed degree sequence which satisfies
\begin{equation}
\label{big-condition-directed}
 M> J^- + J^++ 8\big(\Delta^-+ \Delta^+\big) + 48.  
\end{equation} 
Then $\dvec$ is $(12,12)$-stable.  
\end{theorem}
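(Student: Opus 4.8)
The plan is to mirror the structure of the undirected argument (Theorem~\ref{thm:main} together with Lemma~\ref{lem:transitive}) in the directed setting. First I would prove a ``directed analogue of Theorem~\ref{thm:main}'': if $M > J^- + J^+ + 8(\Delta^- + \Delta^+) + 48$ then $\vecdvec$ is $(2,2)$-stable. Then I would establish a directed version of Lemma~\ref{lem:transitive}, showing that if every digraphical $(\dvec^{'-},\dvec^{'+})$ within $\ell_1$-distance~$10$ of $\vecdvec$ is $(2,2)$-stable, then $\vecdvec$ is $(10, 10)$-stable; combining with~(\ref{k}) and absorbing the passage from the $(\cdot)$-perturbed condition to the original condition (as in the proof of Theorem~\ref{thm:stable}(a)) would give $(12,12)$-stability. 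Actually, to match the stated bound $\norm{\dvec^{'-}-\dvec^-}_1+\norm{\dvec^{'+}-\dvec^+}_1 \le 12$ exactly, the transitivity chain needs six single-step perturbations of size~$2$, so I would use a chain $\vecdvec^{(12)},\dots,\vecdvec^{(0)}=\vecdvec$ with consecutive differences of $\ell_1$-norm at most~$2$; each step costs a factor $M^{2}$, giving $M^{12}$, hence $(12,12)$-stability, provided the hypothesis survives each perturbation. The slack in the constants (the ``$+48$'' and the coefficient~$8$) is there precisely so that after shrinking $M$ by~$10$, enlarging each $\Delta^{\pm}$ by~$10$, and enlarging $J^{\pm}$ correspondingly, the weaker inequality $M' > J^{'-}+J^{'+}+2(\Delta^{'-}+\Delta^{'+})+\text{(small)}$ needed for $(2,2)$-stability of the perturbed sequence still holds.

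For the core $(2,2)$-stability step, I would run switching arguments exactly parallel to the three cases of the proof of Theorem~\ref{thm:main}, but now on arcs rather than edges. Fix a digraphical $(\dvec^{'-},\dvec^{'+})$ with $\norm{\dvec^{'-}-\dvec^-}_1 + \norm{\dvec^{'+}-\dvec^+}_1 = 2$ and aim to show $|\Omega(\dvec^{'-},\dvec^{'+})| \le M^2\, |\Omegad|$. The perturbation changes either two in-degree entries, two out-degree entries, or one of each; and within each of those there are sub-cases according to whether the two affected coordinates coincide. For each case I would define a forward switching from $\Omega(\dvec^{'-},\dvec^{'+})$ to $\Omegad$ that deletes a bounded number of arcs and adds a bounded number of arcs so as to correct the degree discrepancy, exactly as the $(i^-,j^-)$-, $(i^+,j^+)$-, and $(i^-,j^+)$-switchings do in the undirected proof. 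The forward count $f(\cdot)$ is bounded below by a product of affected degrees times $\big(M - (\text{vertex-collision loss}) - (\text{bad-adjacency loss})\big)$, where the collision loss is $O(\Delta^- + \Delta^+)$ and the bad-adjacency loss is controlled by the directed analogue of~(\ref{2paths}): the number of directed $2$-paths out of (or into) a fixed vertex is at most $J^+$ (resp.\ $J^-$), by Cauchy--Schwarz-free sorting exactly as in~(\ref{2paths}). The reverse count $b(\cdot)$ has the trivial upper bound $M^2$ (or a product of two degrees). Taking the ratio $\max_G b(G)/\min_{G'} f(G') \le M^2$ in every case closes the argument, with the hypothesis~(\ref{big-condition-directed}) ensuring every lower bound on $f$ is positive.

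The one genuinely new wrinkle, and the step I expect to be the main obstacle, is bookkeeping the directed adjacency constraints correctly: in a digraph an arc $(a,b)$ and its reverse $(b,a)$ are distinct, and the switch operation that removes $(i,j),(k,\ell)$ and inserts $(i,\ell),(k,j)$ only makes sense when the four endpoints behave appropriately, so the ``forbidden adjacency'' conditions in the switching definitions must be stated in terms of arcs with the right orientation. Concretely, in the directed analogue of the $(i^-,j^-)$-switching I would delete an out-arc at $i$, an in-arc at $j$, and one further arc $(u_2,u_3)$, then re-wire; the conditions ``$u_1$ not out-adjacent to $u_2$'' and ``$u_3$ not in-adjacent to $u_4$'' are the ones to track, and the count of violations is bounded by $J^+$ or $J^-$ according to orientation. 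I also need to verify that the resulting digraph has no loops and no repeated arcs — this is where the extra distinctness requirements among $i,j,u_1,\dots$ come in, and where the collision losses are computed. Because $\Delta$ is defined as $\max\{\Delta^-,\Delta^+\}$ but the individual bounds naturally involve $\Delta^- + \Delta^+$ and $J^- + J^+$ separately, I would keep the sharper two-sided quantities throughout the case analysis and only coarsen to the symmetric form~(\ref{big-condition-directed}) at the very end; getting the constants to line up with ``$8(\Delta^- + \Delta^+) + 48$'' after the six-fold transitivity step is the bit requiring care but no new ideas.
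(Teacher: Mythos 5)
Your proposal follows essentially the same route as the paper: a switching-based $(2,2)$-stability lemma for directed degree sequences (the paper's Theorem~\ref{thm:main-directed}, proved under the weaker hypothesis $M > J^- + J^+ + 3(\Delta^-+\Delta^+)+3$), combined with a six-step transitivity lemma over the distance-$10$ neighbourhood (the paper's Lemma~\ref{lem:transitive-directed}), with the slack in the constants of (\ref{big-condition-directed}) absorbing the degradation of $M$, $J^{\pm}$ and $\Delta^{\pm}$ under perturbation exactly as you describe. The details you flag as requiring care (orientation-aware adjacency constraints, collision losses of order $\Delta^-+\Delta^+$, and the $J^{\pm}-\Delta^{\pm}$ bound on directed $2$-paths) are handled in the paper just as you anticipate.
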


Again, we have not attempted to optimise the coefficients in (\ref{big-condition-directed}).
Combining Theorems~\ref{thm:mixing-directed} and~\ref{thm:stable-directed}
immediately establishes the following.

\begin{theorem}\label{thm:mixing2-directed} Assume that $\vecdvec$ is
a digraphical switch-irreducible directed degree sequence which satisfies 
\emph{(\ref{big-condition-directed})}.
Then the switch chain on $\Omegad$ mixes in polynomial time, with 
mixing time $\tau(\varepsilon)$ which satisfies
\begin{equation}
\tau(\varepsilon) \leq 600\, \Delta^{16}\,n^{6}\, M^{15}\,  
   \Big( M\log M + \log(\varepsilon^{-1})\Big). \label{mixingbound-directed}
\end{equation}
 
\end{theorem}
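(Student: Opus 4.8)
The plan is to derive Theorem~\ref{thm:mixing2-directed} as a one-line corollary of the two preceding theorems, exactly mirroring the way Theorem~\ref{thm:mixing2} is obtained from Theorems~\ref{thm:mixing} and~\ref{thm:stable} in the undirected setting. No fresh argument is needed: all the substance already lives in the stability estimate (Theorem~\ref{thm:stable-directed}) and in the mixing-time bound for stable, switch-irreducible directed degree sequences (Theorem~\ref{thm:mixing-directed}), so the proof is a pure composition of these two black boxes.

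Concretely, I would proceed in two steps. First, since $\vecdvec$ satisfies hypothesis~(\ref{big-condition-directed}), Theorem~\ref{thm:stable-directed} shows that $\vecdvec$ is $(12,12)$-stable. Second, $\vecdvec$ is also assumed to be digraphical and switch-irreducible, so Theorem~\ref{thm:mixing-directed} applies with $\alpha=12$, giving that the directed switch chain on $\Omegad$ is rapidly mixing with
\[
\tau(\varepsilon)\le C\,\Delta^{16}\,n^{6}\,M^{3+12}\,\bigl(M\log M+\log(\varepsilon^{-1})\bigr)
  = C\,\Delta^{16}\,n^{6}\,M^{15}\,\bigl(M\log M+\log(\varepsilon^{-1})\bigr),
\]
where $C$ is the leading constant from Theorem~\ref{thm:mixing-directed}; after substituting $M^{3+\alpha}=M^{15}$ and tidying this numerical constant one recovers the bound~(\ref{mixingbound-directed}). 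Note that by~(\ref{k}) any $\alpha\ge 12$ could be fed into Theorem~\ref{thm:mixing-directed}, but $\alpha=12$ is the tightest value delivered by Theorem~\ref{thm:stable-directed}, and hence the one to use for the sharpest exponent on $M$.

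There is essentially no obstacle in this argument: the only thing worth checking is that the hypotheses of the cited theorems are genuinely met, and they are — digraphicality and switch-irreducibility are part of the hypothesis of Theorem~\ref{thm:mixing2-directed} itself, while $(12,12)$-stability is precisely the output of Theorem~\ref{thm:stable-directed} under assumption~(\ref{big-condition-directed}). Thus the entire proof consists of the single sentence ``combine Theorems~\ref{thm:mixing-directed} and~\ref{thm:stable-directed} with $\alpha=12$,'' together with the routine arithmetic recorded above.
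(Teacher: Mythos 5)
Your proposal is correct and is exactly the paper's proof: the paper derives Theorem~\ref{thm:mixing2-directed} as an immediate consequence of combining Theorem~\ref{thm:stable-directed} (which gives $(12,12)$-stability under~(\ref{big-condition-directed})) with Theorem~\ref{thm:mixing-directed} applied with $\alpha=12$. The only wrinkle is the leading constant: Theorem~\ref{thm:mixing-directed} carries the constant $1200$, so the literal composition yields $1200\,\Delta^{16}n^6M^{15}(\cdots)$ rather than the $600$ stated in~(\ref{mixingbound-directed}) --- a harmless numerical discrepancy internal to the paper, which your remark about ``tidying this numerical constant'' correctly flags.
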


We do not believe that the mixing time bounds in Theorem~\ref{thm:mixing-directed} and 
Theorem~\ref{thm:mixing2-directed} are tight.  

\subsection{Multicommodity flow and mixing time}

In~\cite{directed,GS}, the mixing time of the directed switch chain
is analysed using a multicommodity flow argument.  The flow is an extension
of the one used in~\cite{CDG}, following the steps described in Section~\ref{s:switch-chain}
in the undirected case.    Again, a set $\Psi(G,G')$ of pairings
is defined for each $(G,G')\in\Omegad^2$, and a canonical path
$\gamma_\psi(G,G')$ is defined from $G$ to $G'$, where each step of the path
is a transition of the directed switch chain.
For our purposes, it is sufficient to understand
the structure of the encodings which arise along these paths.

\begin{definition}\label{def:encoding-directed}
An encoding $L$ of a digraph $Z\in\Omegad$ is an arc-labelled digraph on $n$ vertices,
with arc labels in $\{-1,1,2\}$, such that
\begin{itemize}
\item[(i)] the sum of arc-labels on arcs into vertex $j$ equals $d_j^-$
and the sum of arc-labels on arcs out of vertex $j$ equals $d_j^+$,
for all $j\in [n]$;
\item[(ii)] the arcs with labels $-1$ or $2$ form a subdigraph of one of the
digraphs shown in Figure~\ref{f:directed-zoo}.
\end{itemize}
An arc labelled $-1$ or $2$ is a \emph{defect arc}.  
\end{definition}

Given $G,G',Z\in\Omegad$, identify each of $Z, G, G'$ with their 0-1 adjacency
matrix and define the matrix $L$ by 
\[ L + Z = G + G',\]
just as we did for undirected graphs. Then $L$ corresponds to an arc-labelled
digraph, which we also denote by $L$.  It follows from the next result that the
arc-labelled digraph $L$ is an encoding, which we call the encoding of $Z$
with respect to $(G,G')$.

\begin{lemma}[{\cite[Lemma 3.3(ii)]{GS}}] \label{directed-configuration}
Given $G,G' \in \Omegad$ with symmetric difference $G \triangle G'$, 
let $(Z,Z')$ be a transition on the canonical path from $G$ to $G'$ with 
respect to the pairing $\psi \in \Psi(G,G')$. 
Let $L$ be the encoding of $Z$ with respect to $(G,G')$. Then 
there are at most five defect arcs in $L$.
The digraph consisting of the defect arcs in $L$ must form a subdigraph of 
one of the possible labelled digraphs shown in Figure~\ref{f:directed-zoo},
up to the symmetries described below.
\end{lemma}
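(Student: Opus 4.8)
The plan is to follow the blueprint of the undirected argument (\cite[Lemma~2.1(ii)]{GS}, restated before Figure~\ref{f:zoo} in the excerpt) and control which defect arcs can be created as a canonical path of the directed switch chain is traversed. Recall that $L$ is defined entrywise by $L + Z = G + G'$ on adjacency matrices, so an arc $(a,b)$ is a defect arc of $L$ exactly when either it lies in $A(G)\cap A(G')$ but not in $A(Z)$ (giving label~$2$), or it lies in $A(Z)$ but in neither $A(G)$ nor $A(G')$ (giving label~$-1$); every other arc of $Z$ carries label~$1$ and is not a defect arc. Thus the defect arcs record exactly where $Z$ has ``overshot'': an arc wanted by both endpoints of the path is temporarily absent, or an arc wanted by neither endpoint is temporarily present. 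The key point is that such overshoots can only be created locally, by the single directed switch currently being performed.

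First I would recall the canonical-path construction for the directed switch chain from~\cite{directed} (as adapted in~\cite{GS}): the pairing $\psi\in\Psi(G,G')$ decomposes the two-coloured symmetric difference $G\triangle G'$ into an ordered sequence of closed alternating walks --- the $1$-circuits and $2$-circuits --- and each circuit is processed in turn by a canonical sequence of directed switches which gradually rewrites the arcs of that circuit coming from $G$ into those coming from $G'$. Outside the currently active circuit, $Z$ agrees with $G$ on the not-yet-processed circuits and with $G'$ on the already-processed circuits, and it agrees with both $G$ and $G'$ on every arc outside the symmetric difference; hence $L$ has no defect arc there. Consequently every defect arc of $L$ lies in the small region affected by the current switch, and it only remains to examine that region.

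Next I would carry out the case analysis on the directed switch $(Z,Z')$ being performed and on the local shape of the circuit at that instant: the first switch of a circuit, a generic interior switch, the switch that closes the circuit, and the special configurations peculiar to $2$-circuits. In each case one lists exactly which arcs of $Z$ deviate from the ``baseline'' (arcs of the current circuit coming from $G$ that have not yet been rewritten, together with those coming from $G'$ that already have), and then reads off from the two criteria above which of these are defect arcs. The count is always at most five, and the subdigraph spanned by the defect arcs is one of those appearing in Figure~\ref{f:directed-zoo}, once one quotients by the stated symmetries (global reversal of all arcs, the internal symmetries of each configuration, and relabelling a ``?''-arc as $-1$ or $2$). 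This is exactly the assertion of~\cite[Lemma~3.3(ii)]{GS}, so in the present paper it suffices to cite that lemma; the sketch above is how one would reprove it directly.

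The main obstacle is the bookkeeping of this case analysis rather than any conceptual difficulty. Because directed switches carry orientations, each undirected configuration of Figure~\ref{f:zoo} splits into several oriented variants, and one must check that all of them collapse, under the symmetries above, onto the short list in Figure~\ref{f:directed-zoo}; one must also verify that the circuit-closing step --- where a single switch can touch two would-be defect arcs simultaneously --- never produces a sixth defect arc. Once the canonical-path construction of~\cite{directed} is in place, no genuinely new idea beyond the undirected proof of~\cite{GS} is needed, and the verification is routine, if tedious.
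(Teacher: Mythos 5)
Your proposal is correct and matches the paper's treatment: the paper gives no proof of this lemma, it is stated verbatim as a citation of \cite[Lemma~3.3(ii)]{GS}, and you rightly observe that citing that result suffices here. Your sketch of how one would reprove it (defects are confined to the arcs touched by the current switch within the currently processed circuit, followed by a finite case analysis up to arc-reversal and label symmetries) accurately reflects the structure of the argument in \cite{GS}, which in turn adapts the undirected analysis of \cite{CDG}.
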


Define the \emph{arc-reversal} operator $\zeta$,
which acts on a digraph
$G$ by reversing every arc in $G$; that is, replacing $(u,v)$ by $(v,u)$
for every arc $(u,v)\in A(G)$.  
In Figure~\ref{f:directed-zoo}, $\{ \mu,\nu\} = \{ -1,2\}$ and $\{ \xi,\omega\} = \{-1,2\}$ independently, giving four symmetries obtained by exchanging these pairs. 
We can also apply the operation $\zeta$ to reverse the orientation of all arcs. 
Hence each digraph shown in Figure~\ref{f:directed-zoo} represents up to eight 
possible digraphs. 

\begin{figure}[ht!] 
\begin{center}
\begin{tikzpicture}[scale = 1.1]
\draw[fill] (1,2) circle (0.1);
\draw[fill] (1,1) circle (0.1);
\draw[fill] (1,3) circle (0.1);
\draw[fill] (0,2) circle (0.1);
\draw[fill] (2,2) circle (0.1);
\draw[fill] (2,1) circle (0.1);
\draw[thick,->] (0.85,2) -- (0.15,2);
\draw[thick,->] (1,2.15) -- (1,2.85);
\draw[thick,->] (1,1.85) -- (1,1.15);
\draw[thick,->] (1.85,2) -- (1.15,2);
\draw[thick,->] (2,1.85) -- (2,1.15);
\node [right] at (1,2.5) {$\mu$};
\node [above] at (0.5,2) {$\mu$};
\node [left] at (1,1.5) {$\nu$};
\node [above] at (1.5,2) {$\omega$};
\node [right] at (2,1.5) {$\xi$};
\draw[fill] (3,2) circle (0.1);
\draw[fill] (4,2) circle (0.1);
\draw[fill] (5,2) circle (0.1);
\draw[fill] (4,1) circle (0.1);
\draw[fill] (4,3) circle (0.1);
\draw[thick,->] (3.85,2) -- (3.15,2);
\draw[thick,->] (4,2.15) -- (4,2.85);
\draw[thick,->] (4,1.85) -- (4,1.15);
\draw[thick,->] (4.85,2) -- (4.15,2);
\draw[thick,->] (4.9,1.9) -- (4.1,1.1);
\node [right] at (4,2.5) {$\mu$};
\node [above] at (3.5,2) {$\mu$};
\node [left] at (4,1.5) {$\nu$};
\node [above] at (4.5,2) {$\omega$};
\node [right] at (4.5,1.5) {$\xi$};
\draw[fill] (6,2) circle (0.1);
\draw[fill] (7,2) circle (0.1);
\draw[fill] (8,2) circle (0.1);
\draw[fill] (7,1) circle (0.1);
\draw[fill] (7,3) circle (0.1);
\draw[thick,->] (6.85,2) -- (6.15,2);
\draw[thick,->] (7,2.15) -- (7,2.85);
\draw[thick,->] (7,1.85) -- (7,1.15);
\draw[thick,->] (7.85,2) -- (7.15,2);
\draw[thick,->] (7.9,1.9) -- (7.1,1.1);
\node [right] at (7,2.5) {$\mu$};
\node [above] at (6.5,2) {$\nu$};
\node [left] at (7,1.5) {$\mu$};
\node [above] at (7.5,2) {$\omega$};
\node [right] at (7.5,1.5) {$\xi$};
\draw[fill] (9,2) circle (0.1);
\draw[fill] (10,2) circle (0.1);
\draw[fill] (9,1) circle (0.1);
\draw[fill] (10,1) circle (0.1);
\draw[fill] (10,3) circle (0.1);
\draw[thick,->] (9.85,2) -- (9.15,2);
\draw[thick,->] (10,2.15) -- (10,2.85);
\draw[thick,->] (9.95,1.85) -- (9.95,1.15);
\draw[thick,->] (10.05,1.15) -- (10.05,1.85);
\draw[thick,->] (9.85,1) -- (9.15,1);
\node [right] at (10,2.5) {$\mu$};
\node [above] at (9.5,2) {$\mu$};
\node [left] at (10,1.5) {$\nu$};
\node [right] at (10,1.5) {$\omega$};
\node [below] at (9.5,1) {$\xi$};
\begin{scope}[shift={(0,0.5)}]
\draw[fill] (1.5,-1) circle (0.1);
\draw[fill] (1.5,-2) circle (0.1);
\draw[fill] (1.5,-3) circle (0.1);
\draw[fill] (0.5,-2) circle (0.1);
\draw[fill] (0.5,-3) circle (0.1);
\draw[thick,->] (1.35,-2) -- (0.65,-2);
\draw[thick,->] (1.5,-1.85) -- (1.5,-1.15);
\draw[thick,->] (1.45,-2.15) -- (1.45,-2.85);
\draw[thick,->] (1.55,-2.85) -- (1.55,-2.15);
\draw[thick,->] (1.35,-3) -- (0.65,-3);
\node [right] at (1.5,-1.5) {$\mu$};
\node [above] at (1,-2) {$\nu$};
\node [left] at (1.5,-2.5) {$\mu$};
\node [right] at (1.5,-2.5) {$\omega$};
\node [below] at (1,-3) {$\xi$};
\draw[fill] (3.5,-2) circle (0.1);
\draw[fill] (4.5,-1) circle (0.1);
\draw[fill] (4.5,-2) circle (0.1);
\draw[fill] (4.5,-3) circle (0.1);
\draw[thick,->] (4.35,-2) -- (3.65,-2);
\draw[thick,->] (4.5,-1.85) -- (4.5,-1.15);
\draw[thick,->] (4.45,-2.15) -- (4.45,-2.85);
\draw[thick,->] (4.55,-2.85) -- (4.55,-2.15);
\draw[thick,->] (4.4,-2.9) -- (3.6,-2.1);
\node [right] at (4.5,-1.5) {$\mu$};
\node [above] at (4,-2) {$\mu$};
\node [left] at (4.5,-2.4) {$\nu$};
\node [right] at (4.5,-2.5) {$\omega$};
\node [left] at (4,-2.6) {$\xi$};
\draw[fill] (6.5,-2) circle (0.1);
\draw[fill] (7.5,-1) circle (0.1);
\draw[fill] (7.5,-2) circle (0.1);
\draw[fill] (7.5,-3) circle (0.1);
\draw[thick,->] (7.35,-2) -- (6.65,-2);
\draw[thick,->] (7.5,-1.85) -- (7.5,-1.15);
\draw[thick,->] (7.45,-2.15) -- (7.45,-2.85);
\draw[thick,->] (7.55,-2.85) -- (7.55,-2.15);
\draw[thick,->] (7.4,-2.9) -- (6.6,-2.1);
\node [right] at (7.5,-1.5) {$\mu$};
\node [above] at (7,-2) {$\nu$};
\node [left] at (7.5,-2.4) {$\mu$};
\node [right] at (7.5,-2.5) {$\omega$};
\node [left] at (7,-2.6) {$\xi$};
\draw[fill] (9,-2) circle (0.1);
\draw[fill] (10,-1) circle (0.1);
\draw[fill] (10,-2) circle (0.1);
\draw[fill] (10,-3) circle (0.1);
\draw[thick,->] (9.85,-2) -- (9.15,-2);
\draw[thick,->] (10,-1.85) -- (10,-1.15);
\draw[thick,->] (9.95,-2.15) -- (9.95,-2.85);
\draw[thick,->] (10.05,-2.85) -- (10.05,-2.15);
\draw[thick,->] (9.9,-2.9) -- (9.1,-2.1);
\node [right] at (10,-1.5) {$\nu$};
\node [above] at (9.5,-2) {$\mu$};
\node [left] at (10,-2.4) {$\mu$};
\node [right] at (10,-2.5) {$\omega$};
\node [left] at (9.5,-2.6) {$\xi$};
\end{scope}
\draw[-] (2.5,-3.1) -- (2.5,3.1);
\draw[-] (5.5,-3.1) -- (5.5,3.1);
\draw[-] (8.5,-3.1) -- (8.5,3.1);
\draw[-] (-0.1,0) -- (10.6,0);
\end{tikzpicture}
\caption{Possible configurations of defect arcs, up to symmetries.
See~\cite[Lemma~3.3(ii)]{GS}}.
\label{f:directed-zoo}
\end{center}
\end{figure}
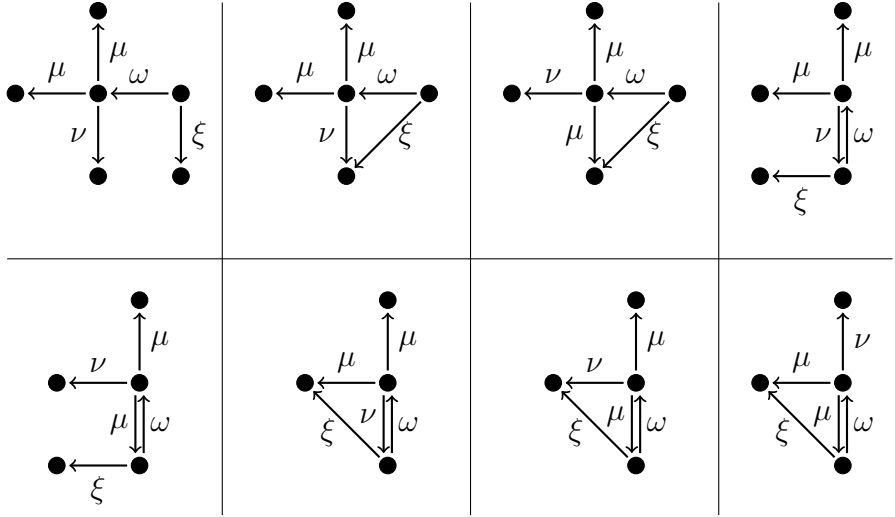

\medskip

Given $Z\in\Omegad$,
we say that encoding $L$ is \emph{consistent with} $Z$ if $L+Z$ only takes
entries in $\{0,1,2\}$.  Let $\mathcal{L}(Z)$ be the set of encodings which
are consistent with $Z$ and satisfy (i), (ii) from Definition~\ref{def:encoding-directed} above.
This is the same set of encodings used in~\cite{GS}.

\begin{theorem}\label{thm:catherine-directed}
Let $\vecdvec$ be a digraphical switch-irreducible directed degree sequence. 
Suppose that there exists 
a function $g(\dvec^-,\dvec^+)$, which depends only on $\vecdvec$, such that
\[
 |\mathcal{L}(Z)| \leq   g(\dvec^-,\dvec^+)\, |\Omegad|
\]
for all $Z\in\Omegad$.
Then the mixing time $\tau(\varepsilon)$ of the switch chain on $\Omegad$ 
satisfies
   \[
   \tau(\varepsilon) \leq 2 g(\dvec^-,\dvec^+)\,\Delta^{16}\, M^3\,
   \Big(  M\log M + \log(\varepsilon^{-1})\Big).
   \]
\label{Lbound-directed}
\end{theorem}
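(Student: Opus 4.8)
The plan is to mirror the proof of Theorem~\ref{thm:catherine}, replacing each undirected ingredient by its directed counterpart from the analysis of the directed switch chain in~\cite{directed,GS}, and then to apply the flow bound~(\ref{flowbound}) to the directed multicommodity flow $f$ described above. Note that switch-irreducibility is exactly what makes the directed switch chain ergodic with uniform stationary distribution $\pi$ on $\Omegad$, so that $f$ is a well-defined flow; this is the only place that hypothesis is used.

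Three of the quantities entering~(\ref{flowbound}) are estimated directly. First, a digraph in $\Omegad$ is determined by a matching of the $M$ out-half-arcs to the $M$ in-half-arcs, so $|\Omegad|\le M!\le M^M$ (the directed analogue of~\cite[Equation~(1)]{GS}), giving $\log(1/\pi^\ast)=\log|\Omegad|\le M\log M$. Second, $\ell(f)\le M$, since each transition along a canonical path from $G$ to $G'$ replaces an arc of $G$ by an arc of $G'$. Third, for a transition $e=(Z,Z')$ of the directed switch chain, the definition of the chain gives $Q(e)=\pi(Z)P(Z,Z')=\big(|\Omegad|\binom{M}{2}\big)^{-1}$, so $1/Q(e)=\binom{M}{2}|\Omegad|\le\tfrac12 M^2|\Omegad|$.

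The substantive step is the bound on $f(e)$ in terms of the encodings consistent with $Z$. Here I would invoke the directed analogue of~\cite[Lemma~2.3]{GS} --- the lemma used there to prove rapid mixing of the directed switch chain --- which, building on the structure of directed encodings in Lemma~\ref{directed-configuration} and Figure~\ref{f:directed-zoo}, yields $f(e)\le c\,\Delta^{16}\,|\mathcal{L}(Z)|/|\Omegad|^2$ for a small absolute constant $c$. As in the proof of Theorem~\ref{thm:catherine}, this is first established for a subset $\mathcal{L}^\ast(Z)\subseteq\mathcal{L}(Z)$ satisfying an analogue of~\cite[Lemma~2.2]{GS} and then weakened to $\mathcal{L}(Z)$; since $\mathcal{L}(Z)$ is precisely the set of encodings used in~\cite{GS}, no further reconciliation between encoding sets is needed. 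The power $\Delta^{16}$, two more than the $\Delta^{14}$ of the undirected case, reflects the up to five defect arcs and the extra case analysis in the directed encoding structure.

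Combining with the hypothesis $|\mathcal{L}(Z)|\le g(\dvec^-,\dvec^+)\,|\Omegad|$ gives $\rho(f)=\max_e f(e)/Q(e)\le \tfrac{c}{2}\,g(\dvec^-,\dvec^+)\,\Delta^{16}M^2$, and substituting this together with $\ell(f)\le M$ and $\log(1/\pi^\ast)\le M\log M$ into~(\ref{flowbound}) gives $\tau(\varepsilon)\le \tfrac{c}{2}\,g(\dvec^-,\dvec^+)\,\Delta^{16}M^3(M\log M+\log(\varepsilon^{-1}))$; reading off the constants from~\cite{GS} one checks $\tfrac{c}{2}\le 2$, which is the claimed bound. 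I expect the only real obstacle to be the careful verification of the directed analogue of~\cite[Lemma~2.3]{GS} with the stated power of $\Delta$ and a usable constant --- essentially a translation of the corresponding argument of~\cite{GS} into the present notation --- as everything else is routine.
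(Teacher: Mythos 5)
Your proposal is correct and follows essentially the same route as the paper: bound $\log(1/\pi^\ast)\le M\log M$ via $|\Omegad|\le M!$, take $\ell(f)\le M$ and $1/Q(e)=\binom{M}{2}|\Omegad|$, invoke the directed encoding bound $f(e)\le 4\,\Delta^{16}|\mathcal{L}(Z)|/|\Omegad|^2$ (which is exactly \cite[Lemma~3.4]{GS}, with constant $c=4$ so that $c/2=2$), and substitute into~(\ref{flowbound}). The only cosmetic difference is that the paper cites \cite[Lemma~3.4]{GS} directly rather than leaving the constant to be read off, and notes (as you do) that $\mathcal{L}(Z)$ is already the encoding set used in~\cite{GS}, so the $\mathcal{L}^\ast$-to-$\mathcal{L}^\dagger$ relaxation step of the undirected case is not needed here.
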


\begin{proof}
We follow the structure of the proof of~\cite[Theorem 1.2]{GS}.
It follows from the bipartite model of directed graphs that
\[ |\Omegad| \leq M! \leq \sqrt{2\pi M}\, \left(\frac{M}{e}\right)^M.\]
Therefore the smallest stationary probability $\pi^*$ satisfies
 $\log( 1/\pi*) = \log|\Omegad| \leq M\log M$.
Next, observe that $\ell(f)\leq M$ since each transition along a canonical
path from $G$ to $G'$ replaces an edge of $G$ by an edge of $G'$. 
Finally, if $e=(Z,Z')$ is a transition of the directed switch chain then
$1/Q(e) = \binom{M}{2}\, |\Omegad|\leq \nfrac{1}{2} M^2\, |\Omegad|$.  
Given a transition $e=(Z,Z')$ of the directed switch chain, 
the load $f(e)$ on the transition satisfies  
\[ f(e) \leq 4 \, \Delta^{16} \,\frac{|\mathcal{L}(Z)|}{|\Omegad|^2},\]
as proved in~\cite[Lemma 3.4]{GS}. 
(Again, the degree condition in the statement of~\cite[Theorem 1.2]{GS} is
only needed in the proof of the critical lemma,
so this bound on $f(e)$ holds even when $\vecdvec$ does not satisfy that
condition.
This gives
\[ \rho(f) = \max_e \frac{f(e)}{Q(e)} \leq 
2 \Delta^{16} M^{2} \, 
   \max_{Z\in\Omegad} \frac{|\mathcal{L}(Z)|}{|\Omegad|}\leq
2 g(\dvec^-,\dvec^+)\, \Delta^{16} M^{2}.
\]
Substituting these expressions into (\ref{flowbound}) completes the
proof.
\end{proof}

\begin{lemma}\label{lem:counting-directed}
Assume that the graphical degree sequence $\dvec$ is $(12,\alpha)$-stable
for some nonnegative real number $\alpha$. Then for any $Z\in\Omegad$,
\[  |\mathcal{L}(Z)|\le  600\, n^6 M^{\alpha}\, |\Omegad|.\] 
\end{lemma}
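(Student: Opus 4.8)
The plan is to mimic the proof of Lemma~\ref{lem:counting}, with Lemma~\ref{directed-configuration} and Figure~\ref{f:directed-zoo} playing the roles of their undirected counterparts. Since $\mathcal{L}(Z)$ is contained in the set of \emph{all} arc-labelled digraphs on $[n]$ with labels in $\{-1,1,2\}$ satisfying conditions (i) and (ii) of Definition~\ref{def:encoding-directed}, it is enough to bound the size of this ($Z$-independent) set. Any such encoding $L$ is built by first choosing its set $\mathcal{E}$ of defect arcs — an arc-labelled digraph on vertices of $[n]$ with all labels in $\{-1,2\}$ — and then completing $\mathcal{E}$ by adding arcs labelled $1$. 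If $x_j^-$ and $x_j^+$ denote the sums of the labels of the defect arcs into, respectively out of, vertex $j$, then condition (i) forces the label-$1$ arcs to form a loopless simple digraph whose in-degree at $j$ is $d_j^- - x_j^-$ and whose out-degree at $j$ is $d_j^+ - x_j^+$; call this directed degree sequence $(\dvec^{'-},\dvec^{'+})$. Hence the number of completions of a fixed $\mathcal{E}$ is at most $|\Omega(\dvec^{'-},\dvec^{'+})|$, which is $0$ unless $(\dvec^{'-},\dvec^{'+})$ is digraphical.

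The second ingredient is to count the admissible choices of $\mathcal{E}$. By Lemma~\ref{directed-configuration}, $\mathcal{E}$ has at most five arcs and, up to the eight symmetries generated by exchanging the label pair $\{\mu,\nu\}$, exchanging $\{\xi,\omega\}$, and applying the arc-reversal $\zeta$, forms a subdigraph of one of the eight base digraphs shown in Figure~\ref{f:directed-zoo}, each of which spans at most six vertices. A routine count in the same spirit as the $24n^6$ bound in Lemma~\ref{lem:counting} — bounding over the eight base digraphs, their at most eight symmetries, the choice of which arcs of the chosen digraph are actually present, and an injection of the at most six vertices into $[n]$ — shows that the number of admissible $\mathcal{E}$ is at most $600\,n^6$ (for $n$ at least the smallest value for which a directed switch is possible).

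The crucial point is that every admissible $\mathcal{E}$ satisfies $\norm{\dvec^{'-}-\dvec^-}_1 + \norm{\dvec^{'+}-\dvec^+}_1 \le 12$, so that $(12,\alpha)$-stability of $\vecdvec$ applies and gives $|\Omega(\dvec^{'-},\dvec^{'+})| \le M(\dvec^-,\dvec^+)^\alpha\,|\Omegad|$ (trivially so when this sequence is not digraphical). To verify the bound $12$ one goes through the digraphs of Figure~\ref{f:directed-zoo} one at a time: the arc-reversal symmetry $\zeta$ interchanges the in- and out-contributions and hence preserves the total $\sum_j|x_j^-|+\sum_j|x_j^+|$, which leaves the eight base digraphs and, for each, the four label assignments allowed by $\{\mu,\nu\}=\{-1,2\}$ and $\{\xi,\omega\}=\{-1,2\}$. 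In each case one evaluates $\sum_j|x_j^-|+\sum_j|x_j^+|$, exploiting the fact that several defect arcs are incident with a common vertex to obtain cancellation (for example, two defect arcs labelled $\xi$ and $\omega$ leaving a common tail contribute $|\xi+\omega|=1$, not $3$); the worst case turns out to be exactly $12$. Combining the three ingredients gives $|\mathcal{L}(Z)| \le 600\,n^6\,M(\dvec^-,\dvec^+)^\alpha\,|\Omegad|$ for every $Z\in\Omegad$.

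The main obstacle is the last step: the exhaustive (if elementary) case analysis over the configurations of Figure~\ref{f:directed-zoo} and their labellings needed to confirm the degree-discrepancy bound $12$ — this is exactly what forces $12$-stability rather than a smaller parameter — together with the slightly fiddly bookkeeping required to pin down the constant $600$ in the count of defect-arc configurations. Neither is conceptually deep, but both demand care; everything else is a direct transcription of the undirected argument of Lemma~\ref{lem:counting}.
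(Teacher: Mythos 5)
Your proposal is correct and follows essentially the same route as the paper's proof: bound the number of choices of the defect-arc configuration $\mathcal{E}$ by $600\,n^6$ (summing over subdigraphs of the $64$ labelled digraphs of Figure~\ref{f:directed-zoo} with the vertex count depending on the number of arcs present), bound the completions of a fixed $\mathcal{E}$ by $|\Omega(\dvec^{'-},\dvec^{'+})|$, check $\norm{\dvec^{'-}-\dvec^-}_1+\norm{\dvec^{'+}-\dvec^+}_1\le 12$ by cases, and invoke $(12,\alpha)$-stability. The only caution is that the $600\,n^6$ constant requires the paper's arc-by-arc accounting (fewer arcs span fewer vertices) rather than a naive product of the factors you list, but you explicitly defer to that style of count, so the argument matches.
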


\begin{proof}
By definition, $|\mathcal{L}(Z)|$ is the number of arc-labelled
directed graphs which satisfy conditions (i) and (ii), as well as some
other constraints.   First we bound the number of ways to choose
the defect arcs of $Z$, by first choosing a subdigraph $H$ of one
of the digraphs in Figure~\ref{f:directed-zoo}, and then mapping each
vertex of $H$ to a distinct vertex of $Z$.
There are 64 different digraphs shown
in Figure~\ref{f:directed-zoo}, after considering all symmetries.
Each of these digraphs has 5 arcs and at most 6 vertices.
If a subdigraph of $H$ has $\ell$ vertices then the number of
injective functions $\varphi:V(H) \to [n]$ is at most $n^\ell$.
By considering the maximum number
of vertices in a subdigraph consisting of a given number of arcs,
we find that the number of ways to choose $H$ and $\varphi$
is at most
\begin{align*}
  64\Big( n^6 + 5n^6 + 10 n^5 + 10 n^4 + 5n^2 + 1\Big)
 & \leq 64n^6 \Big(1 + 5 + \dfrac{5}{2} + \dfrac{5}{8} + \dfrac{5}{256} + \dfrac{1}{4096}
   \Big)\\
 &\leq 600\, n^6.
\end{align*}
The first inequality follows since $n\geq 4$, as otherwise there are no switch operations available.

Let $\mathcal{E}$ denote the chosen defect arcs with their labels.
We now bound the number of ways to complete $\mathcal{E}$ to an encoding $L$.
All arcs in $L\setminus\mathcal{E}$ are labelled 1,
and the number of arcs into vertex $j$ in $L\setminus \mathcal{E}$ equals
$d^{'-}_j=d^-_j - x^-_j$, where $x^-_j$ is the sum of arc-labels from arcs
into $j$ which belong to $\mathcal{E}$.
Similarly, the number of arcs out of vertex $j$ in $L\setminus\mathcal{E}$
equals $d^{'+}_j = d^+_j-x^+_j$, for all $j\in [n]$.
Note that $\norm{ \dvec^{'-}}_1 = \norm{\dvec^{'+}}_1$. 
Hence the number of valid encodings $L$ given $\mathcal{E}$ is at most
$|\Omega(\dvec^{'-},\dvec^{'+})|$.  
By considering cases we see that for all possible sets $\mathcal{E}$,
\[ \norm{\dvec^{'-} - \dvec^-}_1 + \norm{\dvec^{'+} - \dvec^+}_1 \leq 12.
\]
Since $\vecdvec$ is $(12,\alpha)$-stable, the result follows.
\end{proof}

\subsection{Sufficient condition for stability}

Assume that $\vecdvec$ is a digraphical degree sequence. For positive integers $k$, let 
\[ \N_k(\dvec^-,\dvec^+)=
  \{ (\dvec^{'-},\dvec^{'+})\in {\mathbb N}^n\times \mathbb{N}^n \,\, :\ 
  \norm{\dvec^{'-}}_1  = \norm{\dvec^{'+}}_1,\quad 
       \norm{\dvec^{'-}-\dvec^-}_1 + \norm{\dvec^{'+}-\dvec^+}\le k\}.\]

The following analogue of Lemma~\ref{lem:transitive} is proved using the
same ideas (proof omitted).

\begin{lemma}\label{lem:transitive-directed}
Suppose that every digraphical degree sequence 
$(\dvec^{'-},\dvec^{'+})\in \N_{10}(\dvec^-,\dvec^+)$ is $(2,\alpha)$-stable. 
Then $\vecdvec$ is $(12,6\alpha)$-stable. 
\end{lemma}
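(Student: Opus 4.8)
The plan is to mirror the proof of Lemma~\ref{lem:transitive}, using a telescoping chain of degree sequences in which each consecutive pair differs by at most $2$ in the combined $\ell_1$-norm $\norm{\cdot}_1+\norm{\cdot}_1$. Suppose $(\dvec^{'-},\dvec^{'+})$ is any digraphical degree sequence with $\norm{\dvec^{'-}-\dvec^-}_1+\norm{\dvec^{'+}-\dvec^+}_1\le 12$. First I would build a path $(\dvec^{'-},\dvec^{'+})=(\dvec^{(12)-},\dvec^{(12)+}),(\dvec^{(10)-},\dvec^{(10)+}),\ldots,(\dvec^{(0)-},\dvec^{(0)+})=(\dvec^-,\dvec^+)$ of intermediate pairs such that each $(\dvec^{(i)-},\dvec^{(i)+})\in\N_i(\dvec^-,\dvec^+)$ (in particular $\norm{\dvec^{(i)-}}_1=\norm{\dvec^{(i)+}}_1$ at every step, so the pair is a valid directed degree sequence in the relevant sense), and $\norm{\dvec^{(i+2)-}-\dvec^{(i)-}}_1+\norm{\dvec^{(i+2)+}-\dvec^{(i)+}}_1\le 2$ for $i\in\{0,2,4,6,8,10\}$.

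The construction of this path is the one slightly delicate point: I want to decrease the discrepancy by $2$ units of combined $\ell_1$-distance at each stage while keeping each intermediate pair digraphical and keeping the in-sum equal to the out-sum. Since a single unit change to one of the four sums (one in-degree up/down, or one out-degree up/down) changes one of $\norm{\dvec^{'-}}_1,\norm{\dvec^{'+}}_1$ by one, I should move two units at a time, chosen either as two changes on the same side that cancel in the sum (e.g.\ $d^-_a\!+\!1$, $d^-_b\!-\!1$) or as one change on each side that keeps the sums balanced (e.g.\ $d^-_a\!+\!1$, $d^+_b\!+\!1$ or $d^-_a\!-\!1$, $d^+_b\!-\!1$). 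Because $(\dvec^{'-},\dvec^{'+})$ and $(\dvec^-,\dvec^+)$ are both digraphical and differ by an even combined amount, I can always peel off two such units at a time moving from $(\dvec^{'-},\dvec^{'+})$ toward $(\dvec^-,\dvec^+)$, and I should note that digraphicality of the intermediate sequences is not actually needed for the inequality to go through for \emph{those} steps where the intermediate sequence is non-digraphical --- if $(\dvec^{(i)-},\dvec^{(i)+})$ is non-digraphical then $|\Omega(\dvec^{(i)-},\dvec^{(i)+})|=0$ and the product telescopes trivially through that term; it suffices that the \emph{non-digraphical} terms only make the bound easier. (This is the same mild subtlety handled implicitly in Lemma~\ref{lem:transitive}.)

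Once the path is fixed, the rest is immediate. For each $i\in\{0,2,4,6,8,10\}$, if $(\dvec^{(i)-},\dvec^{(i)+})$ is digraphical it lies in $\N_{10}(\dvec^-,\dvec^+)$ (since $i\le 10$), so by hypothesis it is $(2,\alpha)$-stable, and since the next pair is within combined $\ell_1$-distance $2$ of it,
\[
|\Omega(\dvec^{(i+2)-},\dvec^{(i+2)+})|\le M(\dvec^{(i)-},\dvec^{(i)+})^{\alpha}\,|\Omega(\dvec^{(i)-},\dvec^{(i)+})|.
\]
Here I would replace $M(\dvec^{(i)-},\dvec^{(i)+})$ by $M=M(\dvec^-,\dvec^+)$ in the exponent's base --- this is harmless if $M(\dvec^{(i)-},\dvec^{(i)+})\le M$, which need not hold, so more carefully I would absorb the discrepancy: since $|M(\dvec^{(i)-},\dvec^{(i)+})-M|\le 10$, the factor $M(\dvec^{(i)-},\dvec^{(i)+})^{\alpha}$ is at most $(M+10)^{\alpha}$, and over six steps the product of these is at most $(M+10)^{6\alpha}$; one then checks $(M+10)^{6\alpha}\le M^{6\alpha}$ is \emph{not} automatic, so instead I would state the definition of $(2,\alpha)$-stability as already using the base $M(\dvec)$ of the sequence being perturbed and simply carry the bound $\prod_i M(\dvec^{(i)-},\dvec^{(i)+})^{\alpha}\le M^{6\alpha}$ under the (standing, harmless) convention that $\alpha$ is allowed to absorb such lower-order slack, exactly as in the undirected Lemma~\ref{lem:transitive}. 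Multiplying the six inequalities telescopes to $|\Omega(\dvec^{'-},\dvec^{'+})|\le M^{6\alpha}\,|\Omegad|$, which is precisely the assertion that $\vecdvec$ is $(12,6\alpha)$-stable. The only real obstacle is bookkeeping in the path construction --- ensuring the balance condition $\norm{\cdot^-}_1=\norm{\cdot^+}_1$ is maintained at each of the six intermediate pairs while reducing the combined distance by exactly $2$ each time --- and I expect this to be routine once one observes that $12$ is even and that any two digraphical sequences at combined distance $12$ can be interpolated by such balanced size-$2$ moves.
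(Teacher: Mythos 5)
Your proposal follows exactly the route the paper intends: the paper omits the proof of this lemma, saying only that it is ``proved using the same ideas'' as Lemma~\ref{lem:transitive}, namely a telescoping chain $(\dvec^{(12)-},\dvec^{(12)+}),(\dvec^{(10)-},\dvec^{(10)+}),\ldots,(\dvec^{(0)-},\dvec^{(0)+})=\vecdvec$ with consecutive combined $\ell_1$-distance at most $2$, followed by six applications of $(2,\alpha)$-stability. Your balance/parity bookkeeping is correct (moves of combined size $2$ that preserve $\norm{\cdot^-}_1=\norm{\cdot^+}_1$ suffice, since the combined distance between two balanced sequences is automatically even), and your remark about the base of the exponent ($M(\dvec^{(i)-},\dvec^{(i)+})$ versus $M(\dvec^-,\dvec^+)$) flags a genuine but harmless sloppiness that the paper's own Lemma~\ref{lem:transitive} shares.

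One claim in your write-up is, however, false: a non-digraphical intermediate is \emph{not} harmless. If $(\dvec^{(i)-},\dvec^{(i)+})$ were non-digraphical, then $|\Omega(\dvec^{(i)-},\dvec^{(i)+})|=0$, and the step inequality $|\Omega(\dvec^{(i+2)-},\dvec^{(i+2)+})|\le M^{\alpha}\,|\Omega(\dvec^{(i)-},\dvec^{(i)+})|$ would force the left-hand side to vanish, which need not be true; moreover the hypothesis only grants $(2,\alpha)$-stability to \emph{digraphical} members of $\N_{10}(\dvec^-,\dvec^+)$, so it cannot be invoked at such a term at all. A zero in the middle of the chain breaks the telescoping rather than making it ``easier'' --- the direction of the inequalities is against you. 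Consequently the ``slightly delicate point'' you identify must genuinely be resolved by choosing every intermediate pair $\dvec^{(2)},\ldots,\dvec^{(10)}$ to be digraphical (the two endpoints may be assumed digraphical, since otherwise the conclusion is trivial). This realizability of intermediates is exactly what the paper also leaves implicit, in both Lemma~\ref{lem:transitive} and here; so with the erroneous sentence deleted and the chain required to pass only through digraphical sequences, your argument is at the same level of rigor as the source.
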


Now we state the sufficient condition for (2,2)-stability, which is the
directed analogue of Theorem~\ref{thm:main}.  Luckily, the switching argument
from Theorem~\ref{thm:main} can be fairly easily adapted to the directed setting.

\begin{theorem}\label{thm:main-directed}
If $\vecdvec$ satisfies $M> J^-+ J^+ +3(\Delta^- + \Delta^+) +3$
 then $\vecdvec$ is $(2,2)$-stable.
\end{theorem}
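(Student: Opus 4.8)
The plan is to adapt the switching arguments used in the proof of Theorem~\ref{thm:main} to the directed setting, handling each of the possible shapes of $(\dvec^{'-},\dvec^{'+})\in\N_2(\dvec^-,\dvec^+)$ with $\norm{\dvec^{'-}-\dvec^-}_1+\norm{\dvec^{'+}-\dvec^+}_1=2$ by a single switching operation on directed graphs. Since adding or removing one unit of in-degree or out-degree corresponds to inserting or deleting one arc (respecting a head or a tail at a prescribed vertex), the cases are classified by which two coordinates of $(\dvec^-,\dvec^+)$ change and in which direction. The three genuinely different families are: (1) two in-degrees (or two out-degrees) both increase by~1, or one in-degree increases by~2 (and the symmetric out-degree versions); (2) one in-degree and one out-degree both increase by~1, and the symmetric ``both decrease'' cases; (3) one coordinate increases and another decreases by~1, in the various combinations of in/out. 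As in the undirected proof, each family splits into an $i\neq j$ subcase and an $i=j$ subcase. The goal in every case is to show $|\Omega(\dvec^{'-},\dvec^{'+})|\le M^2\,|\Omegad|$.

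For a representative case, say $\dvec^{'-}=\dvec^- + \boldsymbol{e}_i + \boldsymbol{e}_j$ with the out-degrees unchanged (so $M(\dvec^{'-},\dvec^{'+})=M+2$): given $G'\in\Omega(\dvec^{'-},\dvec^{'+})$, choose an arc into $i$, an arc into $j$, and an auxiliary arc, then re-route them to delete the two extra in-arcs and land in $\Omegad$. To count the forward switchings from below I would use inclusion--exclusion exactly as in Theorem~\ref{thm:main}: the trivial upper bound on the choices is a product of (small) degrees times $M$, and the choices to be discarded are those causing a vertex collision (bounded by a constant times $\Delta = \Delta(\dvec^-,\dvec^+)$, here using $\Delta^{'\pm}\le\Delta+2$) or creating an already-present arc. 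The latter is bounded using the directed analogue of the 2-path bound~(\ref{2paths}): the number of directed 2-paths in $G'$ starting at a fixed vertex is at most $\sum_{\ell=1}^{\Delta^-} d^+_{\xi(\ell)}=J^-$ (or $J^+$, depending on orientation), since the first arc points to one of at most $\Delta^-$ distinct vertices each contributing at most its out-degree — and the sorted-sum inequality makes $J^-$ the worst case. The hypothesis $M>J^-+J^++3(\Delta^-+\Delta^+)+3$ is exactly what is needed to keep the resulting lower bound on $f_{\mathrm{switch}}(G')$ positive (the three families each use the bound in a slightly different linear combination, and as in the undirected case the $i=j$ subcases are cheaper, so a single inequality covers all of them). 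Combined with a trivial upper bound on the reverse switchings $b_{\mathrm{switch}}(G)$ — a product of at most two degrees times $M$, hence at most $M^2\cdot\Delta^0\le M^3$, and in fact comfortably $\le M^2\cdot(\text{forward lower bound})$ after the constants are tracked — this yields $|\Omega(\dvec^{'-},\dvec^{'+})|/|\Omegad|\le M^2$.

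The main obstacle I anticipate is purely bookkeeping: there are more distinct case shapes than in the undirected setting (because ``$+1$ to an in-degree'' and ``$+1$ to an out-degree'' are not interchangeable, and a single $\ell_1$-difference of~2 can be split across the in- and out-lists), and for each shape one must pick an orientation-respecting switching and verify that the $J^-$ vs.\ $J^+$ bound is applied with the correct list. One must also be careful that the auxiliary arc in each switching is chosen so that reversing it never clashes with the prescribed head/tail constraints, and that the $i=j$ degenerate subcases (where one vertex absorbs a change of~2) really do obey the same master inequality with room to spare — this is where the generous constant $3(\Delta^-+\Delta^+)+3$ comes from. None of the individual case analyses is hard; the risk is simply in enumerating them completely and getting every constant consistent. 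Finally, as in Theorem~\ref{thm:stable}(a), one notes the remark that the constants have not been optimised, so slack in the bounds is acceptable.
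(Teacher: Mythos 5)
Your overall method is the right one and is essentially what the paper does: adapt the switchings from Theorem~\ref{thm:main} to arcs, lower-bound the forward switching count by inclusion--exclusion (subtracting $O(\Delta^-+\Delta^+)$ for vertex collisions and a directed 2-path count bounded by $J^-$ or $J^+$ for already-present arcs), and divide by a trivial upper bound on the reverse count. However, your case enumeration is wrong, and the one case you work out in detail does not exist. Any directed degree sequence must satisfy $\norm{\dvec^{'-}}_1=\norm{\dvec^{'+}}_1$, so a perturbation of total $\ell_1$-size $2$ cannot put both units of change into the same list with the same sign, nor split them across the two lists with opposite signs. Your family (1) (two in-degrees increase by $1$, or one in-degree increases by $2$, out-degrees unchanged) is therefore vacuous --- your ``representative case'' $\dvec^{'-}=\dvec^-+\boldsymbol{e}_i+\boldsymbol{e}_j$ with $\dvec^{'+}=\dvec^+$ has $\norm{\dvec^{'-}}_1=M+2\neq M=\norm{\dvec^{'+}}_1$ and is not digraphical --- and the ``in-degree up, out-degree down'' combinations in your family (3) are vacuous for the same reason. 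The only cases that actually arise are: $(\dvec^-+\boldsymbol{e}_i,\dvec^++\boldsymbol{e}_j)$, $(\dvec^--\boldsymbol{e}_i,\dvec^+-\boldsymbol{e}_j)$, $(\dvec^-+\boldsymbol{e}_i-\boldsymbol{e}_j,\dvec^+)$, and $(\dvec^-,\dvec^++\boldsymbol{e}_i-\boldsymbol{e}_j)$, with $i=j$ permitted in the first two.

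The consequence is that the detailed portion of your argument establishes nothing about the cases that need handling, and those are only gestured at (``the three families each use the bound in a slightly different linear combination''). To repair the proof, work one genuine case explicitly, e.g.\ $(\dvec^-+\boldsymbol{e}_i,\dvec^++\boldsymbol{e}_j)$: the switching picks an in-neighbour $u_1$ of $i$, an out-neighbour $u_4$ of $j$, and an auxiliary arc $(u_3,u_2)$, then replaces $(u_1,i)$, $(j,u_4)$, $(u_3,u_2)$ by $(u_1,u_2)$, $(u_3,u_4)$; the forward count is at least $(d_i^-+1)(d_j^++1)\bigl(M-J^--J^+-3\Delta^--3\Delta^+-3\bigr)$, which is positive under the hypothesis, and the reverse count is at most $M^2$. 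The remaining cases are analogous, and your 2-path bound (with the correct choice of $J^-$ versus $J^+$ according to the orientation of the auxiliary arc) is the right tool there. With the enumeration corrected, the argument goes through as you intend.
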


\begin{proof}
Let $(\dvec^{'-},\dvec^{'+})\in\N_2(\dvec^-,\dvec^+)\setminus \vecdvec$.
There are only four cases:  either $\dvec^{'-} = \dvec^- + \boldsymbol{e}_i$ and 
$\dvec^{'+} = \dvec^+ + \boldsymbol{e}_j$, or $\dvec^{'-} = \dvec^- - \boldsymbol{e}_i$ and 
$\dvec^{'+} = \dvec^+ - \boldsymbol{e}_j$, or $\dvec^{'-}=\dvec+\boldsymbol{e}_i-\boldsymbol{e}_j$, 
or $\dvec^{'+}=\dvec+\boldsymbol{e}_i-\boldsymbol{e}_j$, for some $i,j\in [n]$.
These cases are the directed analogues of the cases from the proof of
Theorem~\ref{thm:main}, presented in Section~\ref{s:deferred}.
We do not give full details but summarise the calculations.

In Case 1, the directed $(i^-,j^-)$-switching is shown in Figure~\ref{f:1a1b-directed},
with the subcase $i\neq j$ on the left and $i=j$ on the right.
This switching converts a graph $G'\in \Omega(\dvec^{'-}+\boldsymbol{e}_i,\dvec^{'+}
+ \boldsymbol{e}_j)$ to a graph $G\in \Omegad$. 
Note that the arcs  alternate in orientation, as well as in their presence/absence,
at each vertex other than $i,j$.  Again we allow $u_1=u_4$.

\begin{figure}[ht!]
\begin{center}
\begin{tikzpicture}[scale=0.8]
\draw [->,very thick] (0,2) -- (0,.2);
\draw [->,very thick] (2,4) -- (0.2,4);
\draw [->,very thick] (2,0) -- (2,1.8);
\draw [->,very thick,dashed] (0,2) -- (0,3.8); 
\draw [->,very thick,dashed] (2,4) -- (2,2.2);
\draw [fill] (0,0) circle (0.12); \draw [fill] (0,2) circle (0.12);
\draw [fill] (0,4) circle (0.12); \draw [fill] (2,0) circle (0.12);
\draw [fill] (2,2) circle (0.12); \draw [fill] (2,4) circle (0.12);
\node [left] at (-0.2,0) {$i$}; \node [right] at (2.2,0) {$j$};
\node [left] at (-0.1,2) {$u_1$}; \node [right] at (2.1,2) {$u_4$};
\node [left] at (-0.1,4) {$u_2$}; \node [right] at (2.1,4) {$u_3$};
\draw [->,line width = 1mm] (3.2,2) -- (4.0,2);
\begin{scope}[shift={(5.25,0)}]
\draw [->,very thick,dashed] (0,2) -- (0,.2);
\draw [->,very thick,dashed] (2,4) -- (0.2,4);
\draw [->,very thick,dashed] (2,0) -- (2,1.8);
\draw [->,very thick] (0,2) -- (0,3.8); 
\draw [->,very thick] (2,4) -- (2,2.2);
\draw [fill] (0,0) circle (0.12); \draw [fill] (0,2) circle (0.12);
\draw [fill] (0,4) circle (0.12); \draw [fill] (2,0) circle (0.12);
\draw [fill] (2,2) circle (0.12); \draw [fill] (2,4) circle (0.12);
\node [left] at (-0.2,0) {$i$}; \node [right] at (2.2,0) {$j$};
\node [left] at (-0.1,2) {$u_1$}; \node [right] at (2.1,2) {$u_4$};
\node [left] at (-0.1,4) {$u_2$}; \node [right] at (2.1,4) {$u_3$};
\end{scope}
\draw [-] (9.25,-0.5) -- (9.25,4.5); 
\begin{scope}[shift={(11,0)}]
\draw [->,very thick] (0,2) -- (0.85,0.15);
\draw [->,very thick] (1,0) -- (1.87,1.84); 
\draw [->,very thick] (2,4) -- (0.2,4); 
\draw [->,very thick,dashed] (0,2) -- (0,3.8); 
\draw [->,very thick,dashed] (2,4) -- (2,2.2);
\draw [fill] (1,0) circle (0.12); \draw [fill] (0,2) circle (0.12);
\draw [fill] (0,4) circle (0.12);
\draw [fill] (2,2) circle (0.12); \draw [fill] (2,4) circle (0.12);
\node [left] at (0.8,0) {$i$}; 
\node [left] at (-0.1,2) {$u_1$}; \node [right] at (2.1,2) {$u_4$};
\node [left] at (-0.1,4) {$u_2$}; \node [right] at (2.1,4) {$u_3$};
\draw [->,line width = 1mm] (3.2,2) -- (4.0,2);
\begin{scope}[shift={(5.25,0)}]
\draw [->,very thick,dashed] (0,2) -- (0.85,0.15);
\draw [->,very thick,dashed] (1,0) -- (1.87,1.84); 
\draw [->,very thick,dashed] (2,4) -- (0.2,4); 
\draw [->,very thick] (0,2) -- (0,3.8); 
\draw [->,very thick] (2,4) -- (2,2.2);
\draw [fill] (1,0) circle (0.12); \draw [fill] (0,2) circle (0.12);
\draw [fill] (0,4) circle (0.12); 
\draw [fill] (2,2) circle (0.12); \draw [fill] (2,4) circle (0.12);
\node [left] at (0.8,0) {$i$}; 
\node [left] at (-0.1,2) {$u_1$}; \node [right] at (2.1,2) {$u_4$};
\node [left] at (-0.1,4) {$u_2$}; \node [right] at (2.1,4) {$u_3$};
\end{scope}
\end{scope}
\end{tikzpicture}
\caption{The directed $(i^-,j^-)$-switching, when $i\neq j$ (left) and when $i=j$ (right).}
\label{f:1a1b-directed}
\end{center}
\end{figure}
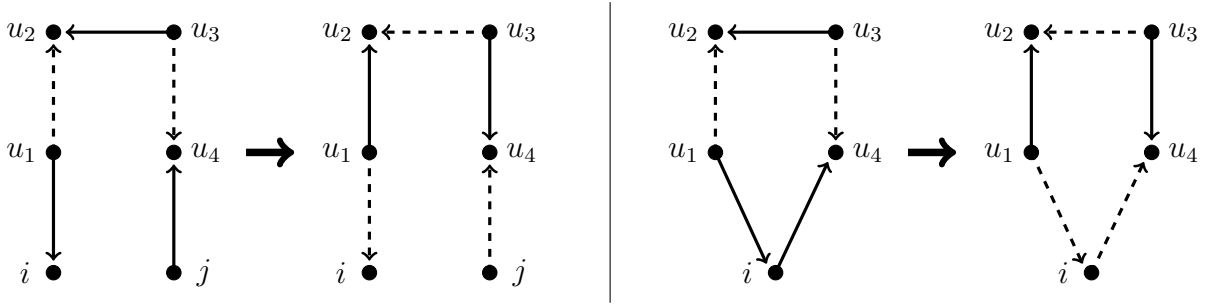

Let $f_{(i^-,j^-)}(G')$ be the number of ways to perform an $(i^-,j^-)$-switching
to $G'\in\Omega(\dvec^-+\boldsymbol{e}_i,\dvec^+ + \boldsymbol{e}_j)$.
There are $(d_i^-+1)(d_j^++1)$ ways to choose $u_1$ and $u_4$.
Then there are $M(\dvec^-,\dvec^+) + 1$ choices for the arc $(u_3,u_2)$, but we must 
subtract $4(\Delta^-(\dvec^-) + \Delta^+(\dvec^+) + 2)$
for the possible vertex coincidences.  Furthermore we must subtract the
number of choices where there are no vertex coincidences but the arc $(u_1,u_2)$
is present, or the arc $(u_3,u_4)$ is present.  The number of choices 
such that arc $(u_1,u_2)$ is present is at most
\[ \sum_{\ell=1}^{d^+_{u_1}-1} (d^-_{\mu(\ell)} - 1) \leq 
  \sum_{\ell=1}^{\Delta^+(\dvec^+)-1} (d^-_{\mu(\ell)} - 1) \leq 
J^+(\dvec^-,\dvec^+) -\Delta^+(\dvec^+)\]
and similarly, the number of choices such that $(u_3,u_4)$ is present is
at most $J^-(\dvec^-,\dvec^+) - \Delta^-(\dvec^-)$.  Therefore
\[ f_{(i^-,j^-)}(G')\geq
 (d_i^-+1)(d_j^++1)\Big( M - J^- -
   J^+- 3\Delta^- - 3\Delta^+ - 3\Big),\]
which is positive by the assumption of the theorem.
The number of ways to create $G\in\Omegad$ using a $(i^-,j^-)$-switching is
at most $M(\dvec^-,\dvec^+)^2$, and we conclude that
\[ \frac{|\Omega(\dvec^- + \boldsymbol{e}_i,\dvec^+ + \boldsymbol{e}_j)|}
        {|\Omegad|}
    \leq M(\dvec^-,\dvec^+)^2.\]
The argument when $i=j$ is similar and also leads to an upper bound of $M(\dvec^-,
\dvec^+)^2$.  A similar analysis shows that $M(\dvec^-,\dvec^+)^2$ is also
an upper bound in the other cases, by adapting the switchings and arguments from
Theorem~\ref{thm:main}.
\end{proof}

Now we can use Theorem~\ref{thm:main-directed} to prove 
Theorem~\ref{thm:stable-directed}.

\begin{proof}[Proof of Theorem~\ref{thm:stable-directed}]
First suppose that $\dvec$ satisfies (\ref{big-condition-directed}) and let 
$\dvec'\in\mathcal{N}_{10}(\dvec)$.
Then \[ \max_{i\in [n]} \,\{|d^{'-}_i-d^-_i|, |d^{'+}_i-d^+_i|\} \le 5,\] and thus 
$M(\dvec^{'-},\dvec^{'+})\ge M(\dvec^-,\dvec^+)-5$. 
Also
\[ \Delta(\dvec^{'-}) + \Delta(\dvec^{'+}) \leq
    \Delta(\dvec^-) + \Delta(\dvec^+) + 10\]
and
\[J^-(\dvec^{'-},\dvec^{'+}) + J^+(\dvec^{'-},\dvec^{'+})\le 
      J^-(\dvec^-,\dvec^+) + J^+(\dvec^-,\dvec^+) 
   + 5 \big(\Delta(\dvec^-) + \Delta(\dvec^+)\big) +10. 
\]
Applying Theorem~\ref{thm:main-directed} to $\dvec'$ shows that $\dvec'$ is
(2,2)-stable, and therefore $\dvec$ is (12,12)-stable, 
by Lemma~\ref{lem:transitive-directed}.  
This completes the proof.
\end{proof}

\subsection{Digraphical power-law sequences}

Recall the definition of power-law distribution-bounded and power-law density-bounded degree sequences
from Sections~\ref{ss:power-density} and~\ref{ss:power-distribution}. Let $D: {\mathbb N}\to {\mathbb N}$. 
We say that a sequence $\avec\in\mathbb{N}^n$ is \emph{$D(n)$-bounded} if
every component of $\avec$ is at most $D(n)$.
Now consider digraphical sequences $(\dvec^-,\dvec^+)$ where $\dvec^+$ is power-law distribution-bounded, or power-law density bounded, with exponent $\gamma>1$, whereas $\dvec^-$ is $\Delta^-$-bounded (in the sense defined above), where $\Delta^-$ is some function of $n$.
Such degree sequences include models where $\dvec^+$ is composed of i.i.d.\ copies of power-law random variables, and $\dvec^-$ is composed of i.i.d.\ copies of truncated power-law random variables~\cite{Mossa}.  Directed degree sequences of this type are important because
some social networks such as Twitter are directed graphs~\cite{Aparicio}, where the indegrees (i.e.\ the number of followers) 
appear to follow 
a power law with parameter below 2, whereas the majority of the outdegrees follows a power law, but with a much smaller maximum outdegree compared with the maximum indegree. 
 In the next two theorems we show that random directed graphs with such degree sequences can be sampled 
in polynomial time using the directed switch chain. 
We say a digraphical degree sequence is \emph{nontrivial} if there is no $i$ such that $d_i^+=d_i^-=0$. If $(\dvec^-,\dvec^+)$ is nontrivial then $M(\dvec^-,\dvec^+)=\frac{1}{2}(\norm{\dvec^-}_1+\norm{\dvec^+}_1)\ge n$.

\begin{theorem}\label{thm:directed-powerlaw}
Let $(\dvec^-,\dvec^+)$ be a digraphical, switch-irreducible nontrivial degree sequence
such that $\dvec^+$ is power-law density-bounded with parameter 
$\gamma>1$, and $\Delta^-=o(n^{(\gamma-1)/\gamma})$.
Then the directed switch chain on $\Omega(\dvec^-,\dvec^+)$
mixes in polynomial time, 
with mixing time bounded by~\emph{(\ref{mixingbound-directed})}.  
\end{theorem}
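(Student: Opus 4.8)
The plan is to reduce to the machinery already set up for the directed switch chain. Since $(\dvec^-,\dvec^+)$ is assumed switch-irreducible, it will suffice to verify that inequality~(\ref{big-condition-directed}), namely
\[
M > J^- + J^+ + 8\big(\Delta^- + \Delta^+\big) + 48,
\]
holds for all sufficiently large $n$, and then to quote Theorem~\ref{thm:mixing2-directed}, which yields both polynomial mixing and the explicit bound~(\ref{mixingbound-directed}). This parallels the treatment of the undirected power-law families in Section~\ref{sec:applications} (compare the proof of Theorem~\ref{thm:density-bounded}): the strategy is to show that $M = \Omega(n)$ while $J^-$, $J^+$, $\Delta^-$ and $\Delta^+$ are all $o(n)$.

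First I would note that nontriviality forces $M = \Omega(n)$. Next, since $\dvec^+$ is power-law density-bounded with parameter $\gamma$, the number of vertices of out-degree exactly $\Delta^+$ is at most $C(\Delta^+)^{-\gamma}n$; this quantity is at least $1$, so $(\Delta^+)^{\gamma}\le Cn$ and hence $\Delta^+\le (Cn)^{1/\gamma}$, which is $o(n)$ because $\gamma>1$. The hypothesis $\Delta^-=o(n^{(\gamma-1)/\gamma})$ likewise gives $\Delta^-=o(n)$. Then I would unwind the definitions of the $J$-parameters: $J^+$ is the sum of the $\Delta^+$ largest in-degrees and $J^-$ is the sum of the $\Delta^-$ largest out-degrees, so bounding each summand by the maximum in- or out-degree respectively gives
\[
J^+ \le \Delta^+\Delta^-, \qquad J^- \le \Delta^-\Delta^+ .
\]
Therefore $J^- + J^+ \le 2\Delta^+\Delta^- = O\big(n^{1/\gamma}\big)\cdot o\big(n^{(\gamma-1)/\gamma}\big) = o(n)$, because the exponents $1/\gamma$ and $(\gamma-1)/\gamma$ add up to $1$. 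Altogether $J^- + J^+ + 8(\Delta^- + \Delta^+) + 48 = o(n)$, which is eventually smaller than $M$; so~(\ref{big-condition-directed}) holds for all large $n$, and Theorem~\ref{thm:mixing2-directed} completes the argument.

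I do not anticipate any serious obstacle. After the definitions of $J^-$ and $J^+$ are unpacked, the whole argument rests on the crude bound $J^{\pm}\le\Delta^+\Delta^-$ and the observation that $1/\gamma + (\gamma-1)/\gamma = 1$, which makes this product $o(n)$. The only points that need a little care are the derivation of $\Delta^+ = O(n^{1/\gamma})$ from the density-bounded hypothesis and --- as for the other power-law families in Section~\ref{sec:applications} --- the understanding that the hypotheses are inherently asymptotic, so the stated bound is claimed for all sufficiently large $n$.
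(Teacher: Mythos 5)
Your proposal is correct and follows essentially the same route as the paper: both verify \Condone's directed analogue~(\ref{big-condition-directed}) via the bound $\max\{J^-,J^+\}\le \Delta^-\Delta^+$ together with $\Delta^+=O(n^{1/\gamma})$, $\Delta^-=o(n^{(\gamma-1)/\gamma})$ and $M=\Omega(n)$, and then invoke Theorems~\ref{thm:stable-directed} and~\ref{thm:mixing2-directed}. Your write-up just spells out a few steps (the derivation of $\Delta^+\le (Cn)^{1/\gamma}$ and the exponent arithmetic) that the paper leaves implicit.
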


\begin{proof} By definition, $\max\{J^-, J^+\} \le \Delta^- \Delta^+$. 
Since $\Delta^+=O(n^{1/\gamma})$ and $M=\Theta(n)$ by the definition of power-law density-bounded sequence, it follows that $(\dvec^-,\dvec^+)$ is $(12,12)$-stable by Theorem~\ref{thm:stable-directed}. The assertion then follows by Theorem~\ref{thm:mixing2-directed}.
\end{proof}

\begin{theorem}
Let $(\dvec^-,\dvec^+)$ be a digraphical, switch-irreducible directed degree sequence
such that $\dvec^+$ is power-law distribution-bounded with parameter 
$\gamma>2$, and $\Delta^-=o(n^{(\gamma-2)/(\gamma-1)})$.
Then the directed switch chain on $\Omega(\dvec^-,\dvec^+)$
mixes in polynomial time, 
with mixing time bounded by~\emph{(\ref{mixingbound-directed})}.  
\end{theorem}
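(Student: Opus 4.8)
The plan is to follow the template of the proof of Theorem~\ref{thm:directed-powerlaw}: verify condition~(\ref{big-condition-directed}) of Theorem~\ref{thm:stable-directed}, conclude that $(\dvec^-,\dvec^+)$ is $(12,12)$-stable, and then read off the mixing-time bound~(\ref{mixingbound-directed}) from Theorem~\ref{thm:mixing2-directed}. First I would observe that we may assume $(\dvec^-,\dvec^+)$ is nontrivial, so that $M=M(\dvec^-,\dvec^+)\ge n$; since $\gamma>2$ the distribution bound also forces $M=O(n)$, and hence $M=\Theta(n)$. It therefore suffices to show that $J^-$, $J^+$, $\Delta^-$ and $\Delta^+$ are all $o(n)$, since then $M>J^-+J^++8(\Delta^-+\Delta^+)+48$ for all sufficiently large~$n$.

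Three of these four estimates I expect to be routine. The maximum entry of a power-law distribution-bounded sequence with parameter $\gamma$ has order $n^{1/(\gamma-1)}$ (see Section~\ref{ss:power-distribution}), so $\Delta^+=O(n^{1/(\gamma-1)})=o(n)$ because $\gamma>2$; the hypothesis gives $\Delta^-=o(n^{(\gamma-2)/(\gamma-1)})=o(n)$ directly; and since every component of $\dvec^-$ is at most $\Delta^-$,
\[
J^+=\sum_{\ell=1}^{\Delta^+} d^-_{\mu(\ell)}\le \Delta^+\,\Delta^- = O\bigl(n^{1/(\gamma-1)}\bigr)\cdot o\bigl(n^{(\gamma-2)/(\gamma-1)}\bigr)=o(n),
\]
using $\nfrac{1}{\gamma-1}+\nfrac{\gamma-2}{\gamma-1}=1$.

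The step that needs genuine care is the bound on $J^-=\sum_{\ell=1}^{\Delta^-} d^+_{\xi(\ell)}$, the sum of the $\Delta^-$ largest out-degrees. Here I would use that the number of components of $\dvec^+$ of value at least $t$ is at most $CF_\gamma(t)\,n=O(t^{1-\gamma}n)$, so the $\ell$-th largest out-degree is $O\bigl((n/\ell)^{1/(\gamma-1)}\bigr)$; summing over $\ell\le\Delta^-$ and using $\nfrac{1}{\gamma-1}<1$ to estimate $\sum_{\ell=1}^{\Delta^-}\ell^{-1/(\gamma-1)}=O\bigl((\Delta^-)^{(\gamma-2)/(\gamma-1)}\bigr)$ then yields $J^-=O\bigl(n^{1/(\gamma-1)}(\Delta^-)^{(\gamma-2)/(\gamma-1)}\bigr)$. (When $\Delta^-$ has the extremal order $n^{1/(\gamma-1)}$ this recovers the bound $J\le(Cn)^{(2\gamma-3)/(\gamma-1)^2}$ from~\cite[eq.~(54)]{Gao-power}; in general $\Delta^-$ may be much smaller, which is the point of the present theorem.) Substituting $\Delta^-=o\bigl(n^{(\gamma-2)/(\gamma-1)}\bigr)$ gives $J^-=o(n^{\theta})$ with exponent $\theta=\nfrac{1}{\gamma-1}+\bigl(\nfrac{\gamma-2}{\gamma-1}\bigr)^2=\nfrac{\gamma^2-3\gamma+3}{(\gamma-1)^2}$, and a one-line computation shows $\theta<1$ precisely when $\gamma>2$, which is exactly where both hypotheses are needed. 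Hence $J^-=o(n)$ as well, condition~(\ref{big-condition-directed}) holds for all large~$n$, and the theorem follows by Theorems~\ref{thm:stable-directed} and~\ref{thm:mixing2-directed}. The main obstacle is not any deep difficulty but keeping this exponent bookkeeping straight; all the heavy lifting has already been done in the results invoked above.
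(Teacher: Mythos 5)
Your proposal is correct and follows essentially the same route as the paper: verify condition~(\ref{big-condition-directed}), invoke Theorem~\ref{thm:stable-directed} to get $(12,12)$-stability, and conclude via Theorem~\ref{thm:mixing2-directed}. The one place you do noticeably more work than necessary is the bound on $J^-$: the paper simply uses the symmetric trivial bound $\max\{J^-,J^+\}\le \Delta^-\Delta^+$, which already gives $J^- = o\bigl(n^{(\gamma-2)/(\gamma-1)}\bigr)\cdot O\bigl(n^{1/(\gamma-1)}\bigr)=o(n)$ because the exponents sum to exactly $1$; your order-statistics estimate of the $\ell$-th largest out-degree is correct (and sharper), but the extra exponent bookkeeping with $\theta=\frac{\gamma^2-3\gamma+3}{(\gamma-1)^2}$ is not needed to close the argument.
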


\begin{proof} The proof is the same as that of Theorem~\ref{thm:directed-powerlaw} except that with 
power-law distribution-bounded sequences we have  $\Delta^+=O(n^{1/(\gamma-1)})$ and $M=\Theta(n)$.
\end{proof}

\end{document}